\journal{Journal of \LaTeX\ Templates}
\newcommand{\R}{{\Bbb R}}
\newcommand{\N}{{\Bbb N}}
\newcommand{\C}{{\Bbb C}}
\let \be=\beta
\let \var=\varphi
\let \vare=\varepsilon
\let \th=\theta
\let \la=\lambda
\let \p=\partial
\let \q=\quad
\newtheorem{thm}{Theorem}
\newtheorem{lem}[thm]{Lemma}
\newtheorem{cor}[thm]{Corollary}
\newtheorem{definition}[thm]{Definition}
\newtheorem{remark}[thm]{Remark}
\newtheorem{example}[thm]{Example}
\newproof{proof}{Proof}
\begin{document}
\begin{frontmatter}

\title{On the geometric diversity of wavefronts  for the scalar Kolmogorov ecological equation }

\author[a]{Karel Has\'ik}
\author[a]{Jana Kopfov\'a}
\author[a]{Petra N\'ab\v{e}lkov\'a}
\author[c]{Sergei Trofimchuk\corref{mycorrespondingauthor}}
\cortext[mycorrespondingauthor]{Corresponding author}
\ead{trofimch@inst-mat.utalca.cl}
\address[a]{Mathematical Institute, Silesian University, 746 01 Opava, Czech Republic}
\address[c]{Instituto de Matem\'atica y F\'isica, Universidad de Talca, Casilla 747,
Talca, Chile }

\bigskip

\begin{abstract}
\noindent   In  this work,   we  answer three fundamental questions concerning monostable travelling fronts for the scalar Kolmogorov ecological equation with diffusion and spatiotemporal interaction:  these are the  questions about their existence, uniqueness and geometric shape. In the particular case of the food-limited model, we give a rigorous  proof of  the existence of  a peculiar, yet  substantive non-linearly determined  class of non-monotone and non-oscillating wavefronts. As regards to the scalar models  coming from applications, this kind of wave solutions is analyzed here  for the first time.  
\end{abstract}
\begin{keyword} nonlocal ecological equation, non-linear  determinacy,   delay, wavefront,  existence, uniqueness. \\
{\it 2010 Mathematics Subject Classification}: {\ 34K12, 35K57,
92D25 }
% keywords here, in the form: keyword \sep keyword
\end{keyword}
\end{frontmatter}
\newpage

\section{Introduction and main results}
\subsection{Travelling waves in  the scalar Kolmogorov ecological equation }
\label{intro} 
\noindent Together with the Mackey-Glass type  diffusive equation \cite{ATV,BY,HJ,KO,LW,LLLM}
\begin{equation}\label{Mg}
u_t(t,x) = u_{xx}(t,x)  - u(t,x)+ {F((K*u )(t,x))},
\end{equation}
the scalar Kolmogorov ecological equation (cf. \cite[model (4.1)]{GC})
\begin{equation}\label{17nl}
u_t(t,x) = u_{xx}(t,x)  + u(t,x){G((K*u)(t,x))}, \quad u \geq 0,\ (t,x) \in \R^2,
\end{equation}
where  nonlocal spatiotemporal interaction among individuals  is expressed in terms  of the convolution of their density $u(t,x)\geq 0$, considered at some time $t$ and location $x$, with an appropriate  non-negative normalized kernel $K(s,y)$, 
\begin{equation}\label{ber}
(K *u)(t,x):= \int_{\R}\int_{\R_+} K(s,y) u(t-s,x-y)dsdy, \quad \int_{\R}\int_{\R_+} K(s,y)dsdy =1,  
\end{equation}
are  undoubtedly the most studied scalar reaction-diffusion models of  population dynamics.  Among others, (\ref{17nl}) includes the delayed  \cite{BS,P1,ADN,fhw,GT,GTLMS,HTa,HTb,ST,wz} and nonlocal  \cite{AC,ZAMP,BNPR,FZ,GL,HK,ST} variants of the KPP-Fisher equation,  diffusive version of F. E. Smith's \cite{Sm} 
 food-limited model \cite{GG,GC,GS,OW,TPT,WL,Pr} as well as the   single species model with Allee effect analyzed in \cite{GoL,HWF,SR,SPH}.    Recall that 
 (\ref{17nl}) possesses  the Allee effect if the maximal per capita growth $G^*:=\max_{u \geq 0} G(u)$ is reached at some  positive point, i.e.  $G(0) <G^*$, cf. \cite{Kua93}.

From both mathematical and modelling points of view, the main difference between equations (\ref{Mg}) and (\ref{17nl}) is that the term $u(t,x)$ enters (\ref{Mg})  additively while  (\ref{17nl}) multiplicatively \cite[Section 1.1]{HS}.  Precisely  the multiplicative coupling of the original function $u(t,x)$ with its transform $(K *u)(t,x)$ in (\ref{17nl}) can be considered as a  complicating factor for the analysis of this equation.

Everywhere in this paper, we  will assume the following "monostability" and continuity conditions on $G:\R_+ \to \R$:  
\begin{equation}\label{Mo}
G(u)(1-u) >0 \ \mbox{for all} \ u \geq 0, u\not=1; 
\end{equation}
$
G \in C(\R_+,\R) 
$
and  there exist finite lower one-sided derivatives
\begin{equation}\label{DG}
\underline{D}G(0^+)=\liminf_{u\to 0^+}\frac{G(u)-G(0)}{u} \in \R, \quad \underline{D}G(1^-) = \liminf_{u \to 1^{-}} \frac{G(u)}{u-1} \in \R. 
\end{equation}
Clearly, assumption (\ref{Mo}) implies that $u=0$ and $u =1$ are the only non-negative equilibria of  equation (\ref{17nl}). Besides  these equilibria, equation (\ref{17nl}) has many other bounded  solutions. 
This work is dedicated 
to the studies of the key transitory regimens,  wavefronts  and semi-wavefronts,  
connecting the trivial equilibrium and some positive (possibly inhomogeneous) steady state of (\ref{17nl}). 
 
We recall that the classical solution $u(t,x) = \phi(x +ct)$  is a
wavefront (or a travelling front) for  (\ref{17nl}) propagating with the velocity $c\geq 0$, if the profile
$\phi$ is $C^2$-smooth and  non-negative function satisfying the boundary conditions $\phi(-\infty) = 0$ and
$\phi(+\infty) = 1$. By replacing the condition $\phi(+\infty) = 1$ with the less restrictive  requirement 
$$0< \liminf_{s \to +\infty}\phi(s) \leq \limsup_{s \to +\infty}\phi(s) < \infty,$$ we obtain the definition of a semi-wavefront. 
Clearly, each wave profile $\phi$ to (\ref{17nl})  satisfies the functional differential equation  
\begin{eqnarray} \label{twe2an} 
\phi''(t) - c\phi'(t) + \phi(t){G((N_c*\phi)(t))}=0,  \quad t \in \R, \end{eqnarray}
where 
%\begin{equation}\label{berN}
$$
N_c(s): = \int_{\R_+}K(v,s-cv)dv \geq 0, \ (N_c*\phi)(t):= \int_{\R}N_c(s) \phi(t-s)ds, \  \int_{\R}N_c(s)ds =1. 
$$
%\end{equation}
%We will also  the situation when $K(s,y)$ is a simple generalized function similar to $K(s,y) = K_1(y)\delta(s)$,   where $\delta(s)$ denotes the Dirac $\delta$-function. 
We assume that  $K(s,y)$  is  such that the  measurable function $N_c(s)$ of two arguments is well defined, so that  (\ref{twe2an}) is indeed a profile equation for  (\ref{17nl}); particularly that $N_c\in L^1(\R)$  and $N_c(s)$ depends continuously on $c$ for each fixed $s$. All these assumptions are rather weak and can be easily checked in each particular case. 
For example, if $K(s,y) = K_1(y)\delta(s),$  (the KPP-Fisher model with the nonlocal {\it spatial}  interaction) then $N_c(s)  = K_1(s)$ so that it is enough to assume that $K_1\in L^1(\R)$.  

In  this work,   we are going to answer three fundamental questions concerning the tra\-velling fronts for equation (\ref{17nl}):  these are the  questions about their existence, uniqueness and their geometric shape.  At the present moment, these aspects are relatively well understood in the case of the Mackey-Glass type diffusive equation (\ref{Mg}) and the KPP-Fisher nonlocal equation,  quite contrarily, they seem to be only sporadically investigated in the case of  the general ecological equation (\ref{17nl}). 

In the particular case of the food-limited model, we also give a rigorous  proof of  the existence of  a peculiar, yet  substantive and seemingly important class of non-monotone and non-oscillating wavefronts. As regards to the scalar models  coming from applications, this kind of wave solutions is analyzed here 
for the first time.

\subsection{On the  existence of semi-wavefronts}
\noindent By invoking  the approaches developed   in  \cite{HK,HTa}, in Sections 2 and 3 we establish  the  following general existence result: 

\begin{thm} \label{Te1} Assume that  the continuous function $G:\R_+\to \R$ satisfies  (\ref{Mo}) and (\ref{DG})  and that the non-negative kernel $K$  satisfies (\ref{ber}).  Then  for each speed $c \geq 2\sqrt{G^*}$ the equation (\ref{17nl})  has at least one semi-wavefront  $u(t,x) = \phi_c(x+ct)$.   Moreover,  (\ref{17nl})  does not possess  any semi-wavefront propagating with  the speed $c < 2\sqrt{G(0)}$.
\end{thm}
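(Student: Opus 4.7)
The plan is to split the theorem into its two claims and attack them separately; the existence part will use a Schauder type fixed point argument of the kind developed in \cite{HK,HTa}, while the non-existence part will follow from a standard asymptotic/Laplace transform obstruction at $-\infty$.

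\textbf{Existence for $c\geq 2\sqrt{G^*}$.} I would rewrite the profile equation (\ref{twe2an}) in integral form. Fix $c\geq 2\sqrt{G^*}$ and let $\lambda_1\leq \lambda_2$ denote the (real, positive) roots of $\lambda^2-c\lambda+G^*=0$. Writing
\[
\phi''-c\phi'+G^*\phi=\phi\bigl[G^*-G((N_c*\phi)(t))\bigr],
\]
the left-hand side is invertible by the Green kernel $\mathcal{G}(t)$ associated with $\partial^2-c\partial+G^*$, giving a fixed-point equation $\phi=\mathcal{A}\phi$ with
\[
(\mathcal{A}\phi)(t)=\int_{\R}\mathcal{G}(t-s)\,\phi(s)\bigl[G^*-G((N_c*\phi)(s))\bigr]\,ds.
\]
Because $G^*-G\geq 0$, the operator preserves non-negativity. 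I would then work on the weighted Banach space $C_\mu(\R)$ of continuous functions $\phi$ with $\|\phi\|_\mu:=\sup_t e^{-\mu t}|\phi(t)|<\infty$ for some $\mu\in(0,\lambda_1)$ in order to make $\mathcal{A}$ continuous and compact on the closed convex set
\[
\mathcal{K}=\bigl\{\phi\in C(\R):\phi^-(t)\leq \phi(t)\leq \phi^+(t)\bigr\},
\]
where $\phi^+(t):=\min\{e^{\lambda_1 t},\, M\}$ with $M$ large enough that $G(u)$ remains under control on $[0,M\|N_c\|_1]$, and $\phi^-(t):=\max\{e^{\lambda_1 t}-Q e^{\eta \lambda_1 t},0\}$ with $\eta\in(1,\lambda_2/\lambda_1)$ and $Q$ large; the verification that $\mathcal{A}\phi^{\pm}$ sandwich correctly exploits $\int \mathcal{G}(s)ds=1/G^*$ and the standard estimates on $(N_c*e^{\lambda t})(s)$. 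Schauder's theorem then furnishes a fixed point $\phi_c\in \mathcal{K}$. The lower bound $\phi^-$ forces $\phi_c(t)\sim e^{\lambda_1 t}$ as $t\to -\infty$ (so $\phi_c(-\infty)=0$ and $\phi_c\not\equiv 0$), while a persistence argument based on (\ref{Mo}) and the multiplicative structure rules out $\liminf_{+\infty}\phi_c=0$; boundedness by $M$ yields $\limsup_{+\infty}\phi_c<\infty$, so $\phi_c$ is a semi-wavefront. The main technical obstacle here is precisely the multiplicative coupling $u\cdot G(K*u)$ pointed out in the paper: unlike in (\ref{Mg}), the nonlinearity vanishes with $\phi$, so the fixed-point scheme must be designed so that positivity is preserved and the correct asymptotics $e^{\lambda_1 t}$ at $-\infty$ is captured; this is what dictates the careful choice of $\mathcal{G}$, the weight $\mu$, and the lower barrier $\phi^-$.

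\textbf{Non-existence for $c<2\sqrt{G(0)}$.} I would argue by contradiction: suppose $\phi$ is a semi-wavefront with profile speed $c<2\sqrt{G(0)}$. Since $\phi(-\infty)=0$ and $N_c\in L^1(\R)$, dominated convergence gives $(N_c*\phi)(t)\to 0$ as $t\to -\infty$; by continuity of $G$, $G((N_c*\phi)(t))\to G(0)$. Hence, outside a half-line $(-\infty,T]$, the profile satisfies
\[
\phi''(t)-c\phi'(t)+\bigl(G(0)+o(1)\bigr)\phi(t)=0,
\]
a linear equation whose limit characteristic polynomial $\lambda^2-c\lambda+G(0)$ has only complex (non-real) roots when $c<2\sqrt{G(0)}$. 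One then shows, either via a Perron/Diekmann type argument on the bilateral Laplace transform of $\phi$ (well-defined on a strip because of the expected exponential decay) or by a direct Sturm comparison with a suitable oscillating solution of the limiting constant-coefficient equation, that every non-trivial solution of the perturbed equation must change sign infinitely often near $-\infty$. This contradicts $\phi\geq 0$, finishing the proof. The delicate point here is to justify the exponential decay of $\phi$ at $-\infty$ required to apply the Laplace transform, which is standard: any non-negative solution of the profile equation that tends to $0$ has an exponential upper bound by Ikehara/comparison arguments analogous to those in \cite{HK}. Granted this, the non-existence statement is a clean Hurwitz type obstruction.
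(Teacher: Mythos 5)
Your non-existence argument is essentially the paper's: the profile solves $y''-cy'+a(t)y=0$ with $a(-\infty)=G(0)$, and Sturm oscillation forces sign changes near $-\infty$ when $c^2<4G(0)$, contradicting $\phi\geq 0$ (the paper, in Lemma \ref{bebe}, also rules out $c\leq 0$ separately and uses the Harnack inequality to first pin down the behaviour at $-\infty$). That half is sound. The existence half, however, has genuine gaps in the construction itself, not just in unchecked details.

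First, your set $\mathcal{K}$ is not invariant under your operator $\mathcal{A}$. On the region where $\phi^+(t)=M>1$, take $\phi\in\mathcal{K}$ with $\phi\approx M$ so that $(N_c*\phi)(s)>1$ on a long interval; then $G((N_c*\phi)(s))<0$ by (\ref{Mo}), the factor $G^*-G((N_c*\phi)(s))$ exceeds $G^*$, and since $\int_{\R}\mathcal{G}(s)\,ds=1/G^*$ you get $(\mathcal{A}\phi)(t)\approx M\bigl[G^*-G(M)\bigr]/G^*>M=\phi^+(t)$. Equivalently, $\min\{e^{\lambda_1t},M\}$ is not a supersolution: on its flat part the residual is $M\,G((N_c*\phi)(t))$, which is positive whenever $(N_c*\phi)(t)<1$ (e.g. for $\phi=\phi^-$). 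The map $\phi\mapsto\phi\,[G^*-G(N_c*\phi)]$ is not monotone, so an order interval between pointwise sub/supersolutions need not be preserved. This is exactly why the paper does not invert $\partial^2-c\partial+G^*$: it first truncates the nonlinearity with $g_\beta$ (to get the a priori bound of Lemma \ref{ogran}), then inverts $\partial^2-c\partial-b$ with $b>G(0)-\min_{[0,2\beta]}G$, so that $r(\phi)=b\phi+g_\beta(\phi)G(N_c*\phi)$ is non-negative and dominated by the increasing function $b u+G(0)g_\beta(u)$ evaluated at the exact KPP front $\phi_+$ of $\phi''-c\phi'+G(0)g_\beta(\phi)=0$ — which is then reproduced exactly by the integral operator. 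Some device of this kind is indispensable.

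Second, three cases covered by the theorem are outside the reach of your construction and are not addressed. (i) The critical speed $c=2\sqrt{G^*}$: there $\lambda_1=\lambda_2$ and your exponent window $\eta\in(1,\lambda_2/\lambda_1)$ is empty; the paper obtains the critical wave as a compactness limit of waves with speeds $c+1/j$. (ii) The Allee case $G(0)<G^*$: the decay rate at $-\infty$ is governed by the roots of $z^2-cz+G(0)=0$, not of $z^2-cz+G^*=0$, and with your $\lambda_1$ the subsolution inequality fails at leading order since $\lambda_1^2-c\lambda_1+G(0)=-(G^*-G(0))<0$; the paper handles this by approximating $G$ with functions $G_j$ satisfying $G_j(0)=G^*+1/j$ and passing to the limit. (iii) General kernels: your lower-barrier computation needs $\int_{\R}e^{-\epsilon s}N_c(s)\,ds<\infty$, which (\ref{ber}) does not guarantee; the paper first works with compactly supported kernels and then approximates a general $N_c$ by compactly supported $N_j$, using the uniform bound $U(c,N)$ of Lemma \ref{ogran} to keep the approximating profiles precompact. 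Without these three limiting arguments the statement is proved only for non-critical speeds, kernels with exponential moments, and nonlinearities without the Allee effect.
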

Theorem \ref{Te1} applies to the  above mentioned population models. In particular,  in the case of the food-limited model with spatiotemporal  interaction,  we have that 
$$
G(u) = \frac{1-u}{1+\gamma u}, \quad \gamma >0,  \ \mbox{so that } \ G(0)=G^*=1.
$$
Clearly, the Allee effect is not present here so that  the model has at least one semi-wavefront propagating with the speed $c$ if and only if 
$c \geq 2$.  

It is worth mentioning that the papers \cite{GG,GC,OW,TPT,WL,Pr} provide a series of 
conditions sufficient for the presence of {\em monotone } wavefronts in the food-limited equation.  On the other hand, as \cite{TPT} indicates and  we will also discuss  it later, it  is  rather unrealistic to expect  derivation of a sharp coefficient criterion for the existence of {monotone} wavefronts to this equation.  This explains the importance of the above simple criterion, $c \geq 2$,  for the existence of semi-wavefronts. The papers \cite{GG,GC,TPT} present computer simulations which confirm numerically  the validity of the above analytical result. Taking $\gamma=0$ in the food-limited equation, we obtain  the KPP-Fisher model with the {\it spatiotemporal}  interaction. In such a case, Theorem  \ref{Te1}  slightly  extends the existence criterion of \cite{BNPR,HK} proved for the KPP-Fisher model with the nonlocal {\it spatial}  interaction (i.e. when  $K(s,y) = K_1(y)\delta(s)$). 

Next,  by considering 
\begin{equation}\label{SAE}
G(u) = a +bu - cu^2, \quad \mbox{where} \ a, c > 0 \ \mbox{and} \ b \in \R, 
\end{equation}
we obtain  a  single species model analyzed in \cite{GL,HWF,SR,SPH}.  Since 
$$
G(0) = G^* = a \ \mbox{if and only if} \  b \leq 0 \quad  \mbox{and} \ G^* = a+ \frac{b^2}{4c}  > G(0)\ \  \mbox{if} \ b >0, 
$$
 this model possesses the Allee effect for  $b >0$.  Clearly,  the birth rate per capita $G(u)$ satisfies all the assumptions of Theorem \ref{Te1} and therefore 
equation (\ref{17nl}) with such $G(u)$ has at least one semi-wavefront propagating with the speed  $c \geq 2\sqrt{G^*}$ and does not have such a solution if $c < 2\sqrt{a}$. 

For some special kernels,   without admitting the Allee effect, the wavefronts to  (\ref{17nl}) with $G(u)$ given by   (\ref{SAE}) were investigated in \cite{HWF,SPH}. 
More  precisely, for special { spatiotemporal} averaging  kernels (\ref{ber}) (allowing the use of the so-called linear chain technique and containing some small parameter $\tau$, a delay)  and  $b <0$, the existence of wavefronts for  (\ref{17nl}),  (\ref{SAE})  was proved by  Song {\it et al} \cite{SPH}  with the help  of  Fenichel's invariant manifold theory.  Recently,  Han {\it et al} considered (\ref{17nl}),  (\ref{SAE})  with the singular kernels  $K(s,y) = K_1(y)\delta(s)$, $K_1(0) >0$, and also with $b < 0$, see  \cite{HWF}. Using the Leray-Schauder degree argument, they proved the existence of  semi-wavefronts for (\ref{SAE}) for each  propagation speed $c \geq 2\sqrt{a}$.   The above mentioned conclusions of \cite{HWF,SPH} follow from  our more general existence result. 

\subsection{On the semi-wavefront uniqueness}

The stability of semi-wavefronts implies their uniqueness up to translation \cite{LLLM} so that 
the uniqueness property of wave can be considered as a natural indicator of its stability.  This simple observation becomes important if we take into account significant  technical difficulties in proving the wave stability \cite{HJ, LLLM}. Now, we can observe a striking difference between nonlocal equations  (\ref{Mg}) and (\ref{17nl}) in what concerns  the uniqueness property of their wave solutions. Indeed, if the nonlinearity  $- u + F(v)$ in (\ref{Mg}) is sub-tangential at $0$ (i.e. $-u+F(v) \leq - u + F'(0)v,  \ u,v \geq 0$), the Diekmann-Kaper theory assures the uniqueness of each wave (including the critical one),  \cite{AGT}. However,   if the nonlinearity $u\;G(v)$ has the similar property 
(i.e. $u\;G(v) \leq u\;G(0),  \ u,v \geq 0$), equation (\ref{17nl})  for certain kernels $K$ can possess multiple wavefronts and  semi-wavefronts propagating with the same speed, \cite{HK}. Even so, a remarkable fact is that, in some special cases, monotone wavefront can still be unique in the class of {\it  all monotone wavefronts} \cite{FZ,MQG}. In particular, this is true when $G(u)$ is a linear function, $G(v)=1-v$. The latter result is due to  Fang and Zhao \cite{FZ} and it  can be generalized for general ecological equation  (\ref{17nl}) as follows: 
\begin{thm}\label{T12}  Assume that $G(u)$ is a strictly decreasing, Lipschtiz continuous  function which is differentiable at $1$ with $G'(1) <0$ and such that, for some $\alpha >0$, 
$$
G(u)/({u-1})-G'(1) = O((1-u)^\alpha), \ u \to 1^-. 
$$
 Furthermore, assume that for each  $c >0$ there are  $\lambda_0(c), \lambda_1(c) \in (0,+\infty]$ such that  
$$
\frak{I}_{c}(\lambda):= \int_{\R_+\times\R}  K(s,y)e^{-\lambda(cs+y)}dsdy \in \R, \ \  \frak{I}_{c}(0)=1, \ \lim_{\lambda\to- \lambda_0(c)^+}\frak{I}_{c}(\lambda)=+\infty, 
$$
for all $\lambda \in (-\lambda_0(c), \lambda_1(c))$ and
$\frak{I}_{c}(\lambda)$ is a scalar continuous function of variables  $c, \lambda$. 
Suppose that $\phi_c(t), \psi_c(t)$ are two monotone wavefronts to
 equation (\ref{twe2an}) propagating with  the same speed $c>0$.  Then there exists  $t'\in \R,$ such that $\phi_c(t)\equiv \psi_c(t+t')$.  \end{thm}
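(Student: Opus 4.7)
The plan is to adapt the sliding method used by Fang and Zhao for the KPP--Fisher nonlocal equation, replacing their explicit linearity $G(u)=1-u$ by the structural hypotheses imposed here.  The proof has three phases: (i) a two-sided Ikehara-type analysis pinning down the exponential decay rate of any monotone wavefront at each tail; (ii) a translation of $\psi_c$ aligning the $-\infty$ leading coefficients of the two waves; (iii) a sliding/touching-point comparison exploiting the strict monotonicity of $G$.

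\textbf{Tail asymptotics.}  Near $-\infty$ the term $(N_c*\phi_c)(t)$ vanishes, so the linearisation of (\ref{twe2an}) at $\phi=0$ is the purely local ODE $\phi''-c\phi'+G(0)\phi=0$, with positive roots $\lambda_\pm(c)=(c\pm\sqrt{c^2-4G(0)})/2$ (real because $c\ge 2\sqrt{G(0)}$ by Theorem~\ref{Te1}).  Combining exponential super-/subsolutions with a standard Ikehara-type Tauberian theorem produces
$$
\phi_c(t)=A\,e^{\lambda_*(c)\,t}(1+o(1)),\qquad t\to-\infty,
$$
for some $A>0$ and a $\lambda_*(c)\in\{\lambda_-(c),\lambda_+(c)\}$ which is the same for all monotone wavefronts of speed $c$.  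Near $+\infty$, set $w=1-\phi_c$; the Hölder condition $G(u)/(u-1)-G'(1)=O((1-u)^\alpha)$ controls the nonlinear remainder, and the linearised equation $w''-cw'+G'(1)(N_c*w)=0$ has characteristic equation
$$
\mu^2+c\mu+G'(1)\,\frak{I}_c(-\mu)=0.
$$
Thanks to $G'(1)<0$, $\frak{I}_c(0)=1$, and $\frak{I}_c(\lambda)\to+\infty$ as $\lambda\to -\lambda_0(c)^+$, this equation admits a positive root $\mu_*(c)\in(0,\lambda_0(c))$, and a second Ikehara-type argument yields $1-\phi_c(t)=B\,e^{-\mu_*(c)\,t}(1+o(1))$ as $t\to+\infty$.

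\textbf{Sliding, and the main obstacle.}  After translating $\psi_c$ to match $\phi_c$ in its leading coefficient at $-\infty$, define
$$
\tau^*=\inf\{\tau\in\R:\psi_c(\cdot+\tau)\ge\phi_c\text{ on }\R\}.
$$
The two-sided exponential estimates force $\tau^*\in[0,+\infty)$, and the plan is to show $\tau^*=0$; by symmetry this yields $\phi_c\equiv\psi_c$.  The \emph{main obstacle} is the non-cooperative character of the equation: since $G$ is strictly decreasing, raising $\phi$ amplifies the factor $\phi$ while suppressing the factor $G(N_c*\phi)$, so the usual parabolic comparison principle does not translate inequalities under the map $\phi\mapsto\phi\,G(N_c*\phi)$.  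The workaround is a touching-point argument: if $\tau^*>0$, any contact point $t_0$ where $\psi_c(t_0+\tau^*)=\phi_c(t_0)$ under the global inequality $\psi_c(\cdot+\tau^*)\ge\phi_c$ gives $(N_c*\psi_c)(t_0+\tau^*)\ge (N_c*\phi_c)(t_0)$, and combining strict monotonicity of $G$ with the profile ODE at $t_0$ propagates the equality $\psi_c(\cdot+\tau^*)\equiv\phi_c(\cdot)$ first on the support of $N_c$ shifted to $t_0$ and then, by iteration, to all of $\R$ -- contradicting the strict inequality of $-\infty$ leading coefficients produced by $\tau^*>0$.  Making this propagation rigorous when $N_c$ has unbounded support (the genuinely spatiotemporal case) is the delicate technical heart of the argument.
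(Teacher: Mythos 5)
Your overall architecture (tail asymptotics, alignment of the $-\infty$ leading terms, sliding with the strict monotonicity of $G$) matches the preparatory part of the paper's argument (Section 4, Lemma \ref{USA} and Corollary \ref{Com}), but there are three concrete gaps, and the last one is fatal to the proposed endgame. First, the existence of a negative zero of $\chi_+(z,c)=z^2-cz+G'(1)\frak{I}_c(z)$ does not follow from the sign data you invoke: $\chi_+(0,c)=G'(1)<0$ \emph{and} $\chi_+(z,c)\to-\infty$ as $z\to-\lambda_0(c)^+$, so both ends have the same sign and no intermediate-value argument applies. The paper needs all of Subsection 4.1 to produce this root, and it does so by extracting a lower exponential bound (\ref{rha}) for $1-\phi$ from the wavefront itself and running an iteration that forces $\chi_+(\sigma_*,c)\ge 0$ at the decay exponent $\sigma_*$ (Lemma \ref{bp}). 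Second, $\chi_+$ may have up to four negative zeros, and a priori two different fronts can converge to $1$ at different rates $\hat z(\phi)\neq\hat z(\psi)$; your single common $\mu_*(c)$ is unjustified, and without it your claim that the sliding infimum $\tau^*$ is finite can fail (if $\psi$ approaches $1$ more slowly than $\phi$, no translate of $\psi$ dominates $\phi$ near $+\infty$). The paper handles this by a relabelling and a second sliding in the opposite direction inside Corollary \ref{Com}.

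The decisive problem is the final step. Your contradiction for $\tau^*>0$ presupposes a \emph{finite} contact point $t_0$ with $\psi_c(t_0+\tau^*)=\phi_c(t_0)$, but with infinite tails the infimum need not be attained: the paper proves (inequality (\ref{IN})) that once two genuinely different monotone fronts are ordered they are \emph{strictly} ordered on all of $\R$, so the only ``contact'' is asymptotic, at $-\infty$ or $+\infty$. The equality-propagation machinery you sketch (which the paper does carry out, via the positivity of the kernel $A(s)$ together with an integral positivity argument when ${\rm supp}\,N_c\cap(0,+\infty)\neq\emptyset$ and backward uniqueness/Gronwall when ${\rm supp}\,N_c\subseteq(-\infty,0]$) serves only to \emph{exclude} a finite touching point; it cannot close the proof. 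The missing idea is the paper's Subsection 4.2.1: once the two fronts share the principal term $(-t)^je^{\lambda(c)t}$ at $-\infty$, their difference decays there at the \emph{faster} rate $\mu(c)$, so for $z\in(\lambda(c),\mu(c))$ the weighted difference $w(t)=(\psi(t)-\phi(t))e^{-zt}$ is positive, bounded and vanishes at both ends; evaluating $w''-(c-2z)w'+(z^2-cz+G(0))w=e^{-zt}\left((\mathcal{F}\psi)(t)-(\mathcal{F}\phi)(t)\right)$ at an interior maximum and using $z^2-cz+G(0)<0$ together with $\mathcal{F}\psi>\mathcal{F}\phi$ gives the sign contradiction (with a separate convexity argument when $c=2\sqrt{G(0)}$). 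Without some substitute for this step, the proposal does not prove the theorem.
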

We note that Theorem \ref{T12} is a   non-trivial extension of the aforementioned uniqueness result from \cite{FZ}.  Indeed,  the proof in \cite{FZ} 
uses  in essential way  the sub-tangency property of the function $u(1-v)$  at the equilibrium $1$ (i.e. the inequality $u(1-v) \leq (1-v)$ for $u,v \in [0,1]$).  This property, however, is not  generally  satisfied by the function $u\,G(v)$ (e.g. in the case of the food-limited model with $\gamma >0$). 
Therefore, in order to prove Theorem \ref{T12}, it was necessary to find a completely different method.   
Our approach here is strongly motivated by the recent studies in \cite{HT,ST}.  The proof of Theorem \ref{T12} is given in Section 4.

\subsection{On the existence of non-monotone and non-oscillating wavefronts}

It is well known  that the classical  (i.e. non-delayed and without non-local interaction) scalar  monostable diffusive equation cannot admit waves other than wavefronts. Moreover, these  wavefronts must have monotone profiles. 
This simple panorama changes drastically if the equation incorporates either delayed or non-local interaction  effects. In such a model, non-monotone waves with unusually  high leading edge can appear: clearly, this type of waves might produce a major impact in  the underlying  biological system \cite{BY2,GL,HWF}.  Therefore, as it was mentioned in \cite{SWZ}, {\it `it is important and challenging, both theoretically and numerically, to find this critical value} [when the wave monotonicity is lost] {\it and to understand the mechanism behind this loss of monotonicity of wavefronts'.} 
  
Ashwin {\em et al}, \cite{ZAMP}, was the first author who provided numerical evidence suggesting a  clear relation between the shape of the wave profile and the position on the complex plane  of eigenvalues to the  profile equation (\ref{twe2an}) linearized around the positive steady state.  The heuristic  paradigma suggested by \cite{ZAMP}  can be considered as  a particular case of the so-called {\it linear determinacy principle} \cite{LLW} and  reads as follows: {\it  If the linearization around the positive equilibrium $\kappa$ has a negative eigenvalue, the  wave profile is monotone; next, if this  linearization  does not have  negative eigenvalues and also does not have complex eigenvalues with the positive real part,  then the wave profile {oscillatory converges}  to $\kappa$; finally,  if this  linearization  does not have a negative eigenvalue but does have complex eigenvalues with the positive real part,  then the wave profile develops non-decaying oscillations   around $\kappa$. } In \cite{ZAMP,GL}, the authors tested this principle on the  non-local and delayed KPP-Fisher equations; numerical simulations  realized in subsequent works  also supported  the above informal  principle  for the food-limited model \cite{GG,GC,OW}, the model with the quadratic function $G(u)$ given by (\ref{SAE}) \cite{HWF} and  the Mackey-Glass type diffusive equations \cite{BY2,LW,LLLM}. Hence, a preliminary answer  to the above  concern in \cite{SWZ} is based on abundant numerical evidence and might be formulated as follows: {\it The wavefront profile loses its monotonicity and starts to oscillate at $+\infty$ around the positive equilibrium $\kappa$ at the moment when the negative eigenvalues of the  linearization of the profile equation at $\kappa$ coalesce  and then disappear}. 
\begin{figure}[h] \label{FF2}
\centering \fbox{\includegraphics[width=9cm]{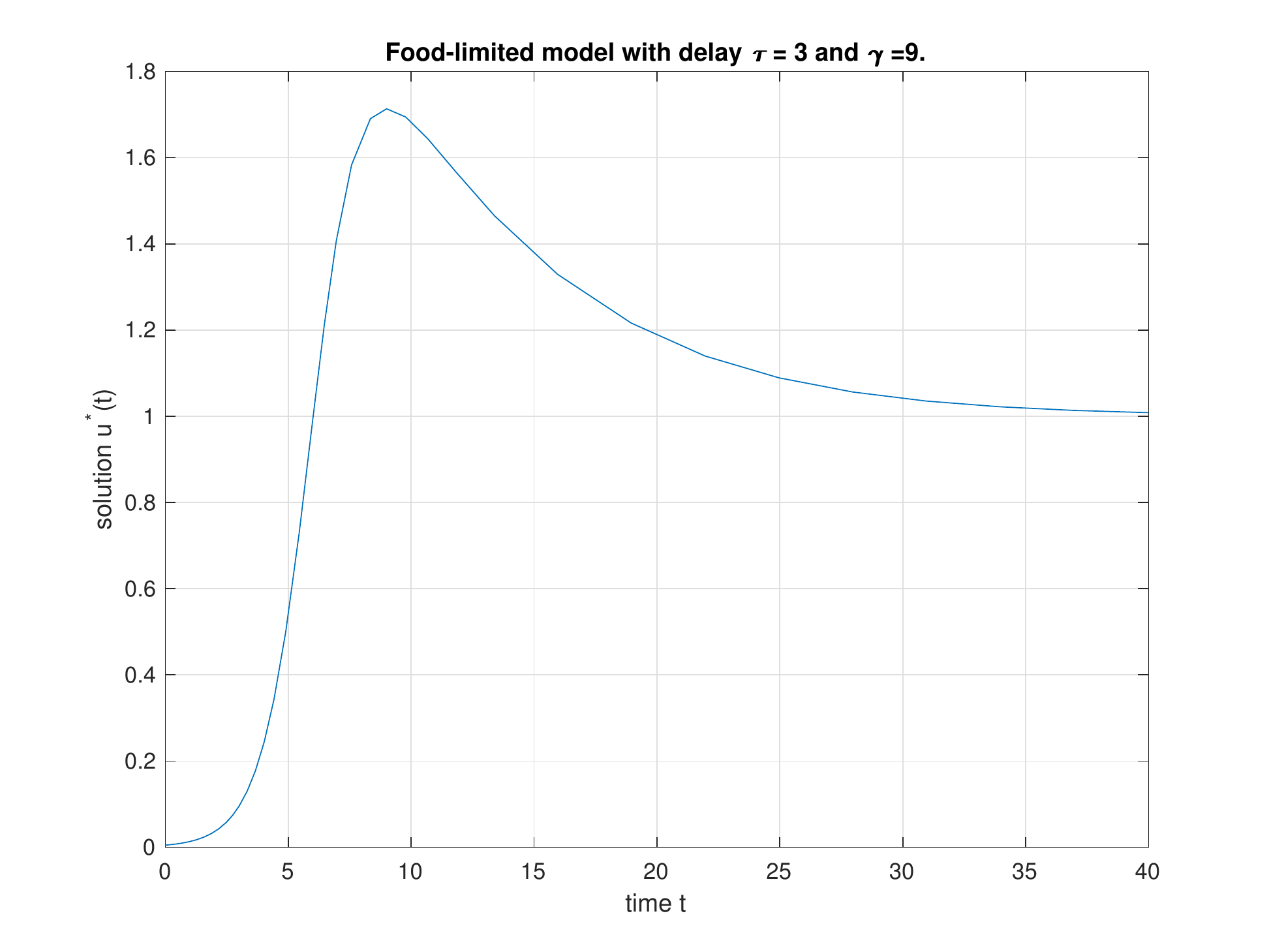}}
\caption{\hspace{0cm} Non-monotone non-oscillating wavefront for equation (\ref{i1dIN}) in the case when $\tau=3, \ \gamma = 9$ and $c\gg 2$.} 
\end{figure}
The above answer is very significant from the practical point of view. Indeed, it allows to indicate the linearly determined `safe' zone of the model parameters where we cannot 
expect appearance of an invasion traveling wave with dramatically high concentration of acting agents in its leading edge (what we can observe  on Figure 1, where $\kappa=1$).   The circumstance that this non-monotone wave is additionally developing oscillations at its rear part seems to be less important: actually, sometimes the oscillatory component is decaying so fast that the oscillating (at $+\infty$) wave 
can visually be interpreted as (or approximated by) eventually monotone wave, e.g. see \cite{GL}.

Importantly, the above mentioned monotonicity criterion can be analytically  justified for some subclasses of equations (\ref{Mg}) 
and (\ref{17nl}) including the KPP-Fisher nonlocal \cite{FZ} and the delayed \cite{ADN,GT,GTLMS,HTa,HTb,ST} equations, some particular cases  of  the Mackey-Glass type delayed  \cite{GTLMS} and non-local \cite{MQG} equations. Analyzing related proofs, we can see in each of them that the reaction term is necessarily dominated by its linear part at the positive steady state.  The key  discovery of the present subsection is that without assuming  this sub-tangency condition on the nonlinearity  $u\,G(v)$ at the equilibrium $1$, i.e. without requiring the inequality 
$$
G(v) \leq G'(1)(v-1), \  v \in [0,1], 
$$
the ecological equation (\ref{17nl}) might not satisfy the above heuristic principle. The mechanism behind the unexpected loss of monotonicity of wavefronts in such a case is precisely the same one which causes the  {\it "linear determinacy principle"} \cite{LLW} to fail   for the model exhibiting the Allee effect (which  finally results in the appearance of  pushed waves).  Specifically, we will show that  the food-limited model with spatiotemporal interaction  admits unexpectedly high wavefronts for a broad domain of parameters $\tau, \gamma$  from an apparently `safe'  zone provided by the linear analysis of the model at the positive steady state, see Figure 1 and the  next two Theorems. 
\begin{thm} \label{T20IN} For each fixed $\tau >0, \gamma >0$ and $c\geq 2$ the food-limited equation 
\begin{equation}\label{i1IN}
\partial_tu(t,x)=\partial_{xx}u(t,x) +u(t,x)\left(\frac{1-(K *u)(t,x)}{1+\gamma (K*u)(t,x)}\right), \quad x \in \mathbb R,
\end{equation}
with the  so-called weak generic delay kernel 
\begin{equation}\label{kernelIN}
K(s,y,\tau)= \frac{e^{-y^2/4s}}{\sqrt{4\pi s}}\frac 1\tau e^{-s/\tau}
\end{equation}
has at least one positive wavefront $u(t,x)=\phi_c(x+ct)$. The profile $\phi_c(t)$ either tends to $1$ as $t \to +\infty$ or is asymptotically periodic at $+\infty$. 
If, in addition,  $\tau >(1+\gamma)/4$ then $\phi_c(t)$ is oscillating around  $1$ on some interval $[A,+\infty)$.  Furthermore, if $\tau \leq (1+\gamma)/4$   then 
there exists   $\hat c(\tau,\gamma)\geq 2$ such that  $\phi_c(t)$  is eventually monotone at $+\infty$ whenever $c \geq \hat c(\tau,\gamma)$.  Finally, for each  $\gamma > 7.29\dots$  there are positive $c_0(\gamma), \tau_\#(\gamma) < (1+\gamma)/4$ such that for each 
$\tau \in (\tau_\#(\gamma), (1+\gamma)/4)$ and $c \geq c_0(\gamma)$ there exists  a wavefront whose profile $\phi_c(t)$  is neither monotone nor oscillating. 
\end{thm}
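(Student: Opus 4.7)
The overall plan exploits the fact that $K(s,y,\tau)$ in (\ref{kernelIN}) is the Green's function of $\partial_t-\partial_{xx}+1/\tau$, so the linear chain trick converts the non-local profile equation into the $4$-dimensional autonomous ODE system
\begin{equation*}
\phi''-c\phi'+\phi\,\frac{1-\psi}{1+\gamma\psi}=0,\qquad
\tau\psi''-\tau c\psi'-\psi+\phi=0,
\end{equation*}
where $\psi:=N_c*\phi$. The equilibria in the positive cone are $E_0=(0,0,0,0)$ and $E_1=(1,0,1,0)$; linearising at $E_1$ and setting $\mu:=c\lambda-\lambda^2$ produces the quadratic $\tau(1+\gamma)\mu^2+(1+\gamma)\mu+1=0$, whose discriminant $(1+\gamma)[(1+\gamma)-4\tau]$ vanishes exactly at $\tau=(1+\gamma)/4$. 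The four characteristic values $\lambda$ therefore split into two real pairs of opposite sign when $\tau\leq(1+\gamma)/4$, and into two complex-conjugate pairs (with $\Re\mu=-1/(2\tau)$) when $\tau>(1+\gamma)/4$.

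I would first apply Theorem~\ref{Te1} to $G(u)=(1-u)/(1+\gamma u)$ (so that $G^*=G(0)=1$) to obtain a semi-wavefront $\phi_c$ for every $c\geq 2$, whose lift $(\phi_c,\psi_c)$ is a bounded entire orbit of the ODE system with $(-\infty)$-limit $E_0$. Its $\omega$-limit at $+\infty$ is a compact connected invariant set, and (\ref{Mo}) rules out equilibria in the positive cone other than $E_0,E_1$; combining hyperbolicity of $E_1$ with a Morse decomposition argument forces this $\omega$-limit to be either $\{E_1\}$ or a periodic orbit, proving both the wavefront existence and the asymptotic dichotomy. The oscillation when $\tau>(1+\gamma)/4$ is then immediate, since the non-real spectrum at $E_1$ makes any convergence oscillatory while a periodic limit oscillates automatically. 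For $\tau\leq(1+\gamma)/4$ the stable eigenvalues are real; for $c\gg 2$ the dominant (slowest-decaying) one is simple with a strictly signed eigenvector in the $\phi$-direction, yielding eventual monotonicity past some $\hat c(\tau,\gamma)$.

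The main obstacle is the last assertion. My plan is a continuation/shooting argument inside the 4-dim system: for $\tau\leq(1+\gamma)/4$ and $c$ large the stable manifold of $E_1$ is two-dimensional with asymptotic directions corresponding to eigenvalues $-\alpha_1(\tau)<-\alpha_2(\tau)<0$, and any heteroclinic $E_0\rightsquigarrow E_1$ satisfies $\phi(\xi)-1\sim A(\tau,c)\,e^{-\alpha_2\xi}$ at $+\infty$. The sign of $A$ separates the two scenarios: $A>0$ gives monotone approach from below, whereas $A<0$ forces an overshoot and hence a non-monotone, non-oscillating profile. For $\tau$ small $A(\tau,c)>0$ by the monotonicity step; the goal is to show that, as $\tau\uparrow(1+\gamma)/4$, the sign of $A$ flips at some $\tau_\#(\gamma)<(1+\gamma)/4$. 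Moreover, the flip actually occurs strictly below the coalescence only when $\gamma$ exceeds a critical value, giving $7.29\dots$ as the smallest $\gamma$ for which the underlying algebraic inequality (extracted from the eigenvector geometry at $\tau=(1+\gamma)/4$) becomes solvable. The hardest technical step is the quantitative control of $A(\tau,c)$: linear theory alone does not fix its sign, so one must either derive an energy-type identity that detects the overshoot, or construct explicit comparison functions that force the heteroclinic above $1$ on the relevant window.
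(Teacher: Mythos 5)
Your overall framework---the linear chain trick, reduction of the profile equation to a four-dimensional autonomous ODE system, a Poincar\'e--Bendixson type dichotomy for the $\omega$-limit set, and the spectral splitting at $\tau=(1+\gamma)/4$---is the same as the paper's, and your use of Theorem \ref{Te1} for the semi-wavefront matches what the paper does implicitly. But two steps contain genuine gaps. First, the assertion that the $\omega$-limit set of the bounded orbit is either $E_1$ or a periodic orbit cannot be obtained from ``hyperbolicity of $E_1$ plus a Morse decomposition argument'' alone: a four-dimensional flow can have recurrent $\omega$-limit sets far more complicated than equilibria and cycles, and nothing in your argument excludes them. The paper gets the dichotomy by first substituting $\phi=e^{-\eta}$ so that the profile system takes the monotone cyclic feedback form (\ref{cs}), and only then invoking the Mallet-Paret--Smith Poincar\'e--Bendixson theorem \cite{MPSm} (with the sign of the determinant at $e$ excluding homoclinics). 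Without identifying this cyclic feedback structure the dichotomy---and hence also the ``converges to $1$ or asymptotically periodic'' claim---is unsupported.

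Second, and more seriously, the last assertion (non-monotone, non-oscillating fronts for $\gamma>7.29\dots$) is precisely the part your proposal leaves open: you state that linear theory does not fix the sign of the overshoot coefficient $A(\tau,c)$ and that one must ``either derive an energy-type identity or construct explicit comparison functions'', but you carry out neither, and the threshold $7.29\dots$ is not derived. The paper's resolution rests on two ingredients absent from your plan. (a) It passes to the singular limit $\epsilon=c^{-2}=0$, where the profile system collapses to the planar ODE (\ref{Flow}); there the overshoot is forced by an explicit polynomial barrier $\psi=a\phi+(1-a)\phi^3$ satisfying the differential inequality (\ref{psT}) and the slope conditions (\ref{fin}) (Lemma \ref{NM}, Corollaries \ref{Coro} and \ref{CoroG}), and the number $7.2907\dots$ emerges from the elementary computation $\min_{x\in(0,1)}\bigl(x^2+2x+2/x\bigr)=4.729\dots$ applied to the leading polynomial $Q_0$. (b) It then invokes the Faria--Trofimchuk singular perturbation theorem (Theorem \ref{TF}, \cite{FTnl}) to produce, for all sufficiently large $c$, wavefronts whose profiles converge uniformly on $\R$ to the planar heteroclinic; uniform convergence preserves the overshoot $\max\phi>1$, while the real spectrum for $\tau<(1+\gamma)/4$ excludes oscillation. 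Working directly in the four-dimensional system at finite $c$, as you propose, would require controlling the sign of $A(\tau,c)$ without this planar reduction, which is exactly the step you acknowledge you cannot complete.
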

 Hence, the aforementioned heuristic criterion fails for $\tau \in (\tau_\#(\gamma), (1+\gamma)/4)$ if the propagation speed is sufficiently large.  Figure 2 below presents the corresponding  region of parameters  (lying between the graphs of  $\tau = (1+\gamma)/4$ and $\tau = \tau_\#(\gamma), \ \gamma \in [10,40]$). 
\begin{figure}[h]
\centering \fbox{\includegraphics[width=9cm]{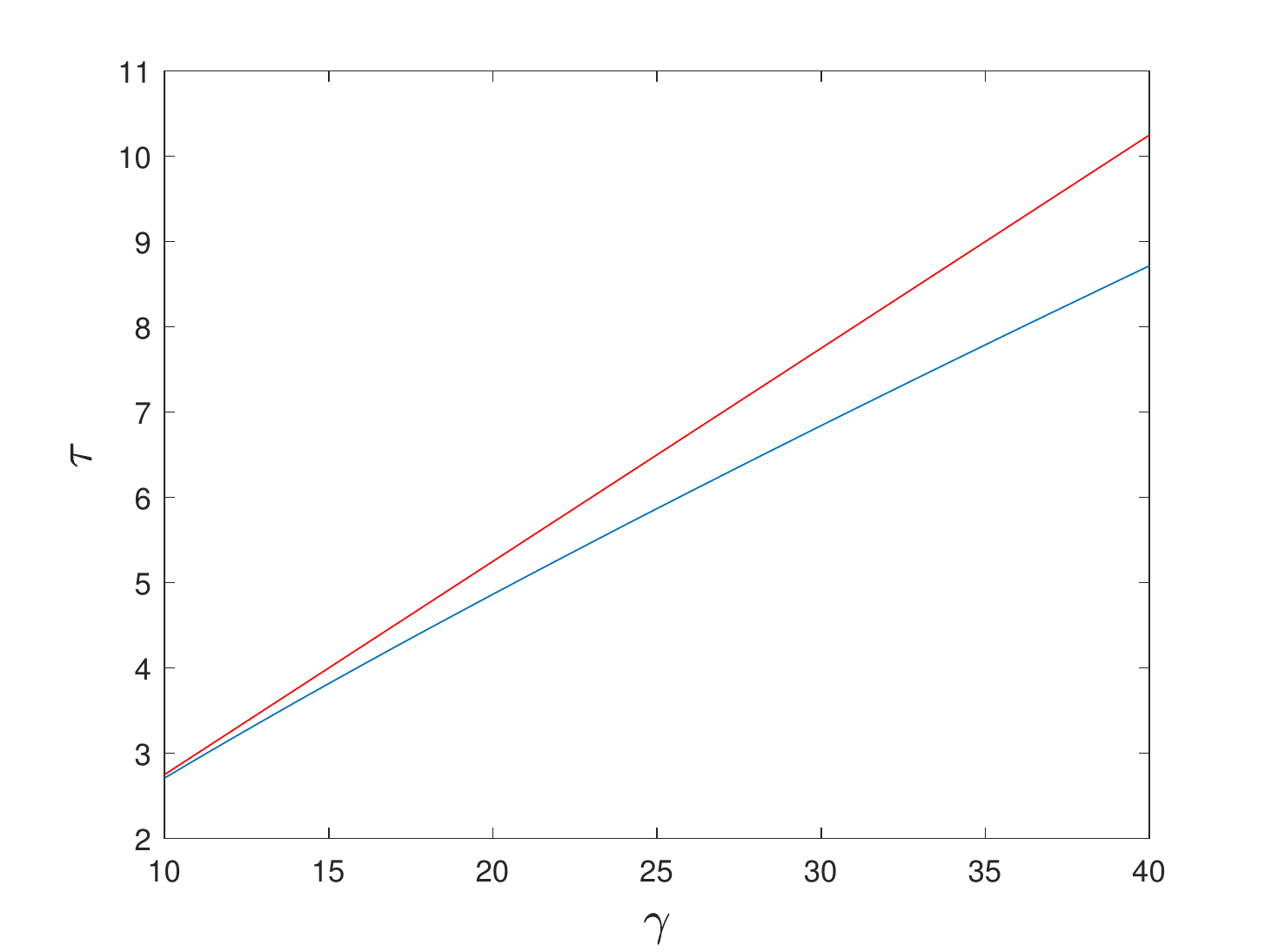}}
\caption{\hspace{0cm}   For each pair $(\gamma, \tau)$ of parameters lying in the domain between the graphs of  $\tau = (1+\gamma)/4$ and $\tau = \tau_\#(\gamma)$ equation (\ref{i1IN}) possesses fast  non-monotone and non-oscillating wavefronts.}
\end{figure}
It should be noted that  the existence of monotone wavefronts for (\ref{i1IN}) was recently proved in \cite{TPT} under the condition
 \begin{equation*}\label{ex1c}
 \tau \leq \left\{
\begin{array}{ccc}
(1+\gamma)/4 & ,& \gamma \in (0,1],\\
\gamma/(1+\gamma) & , & \gamma \geq 1.  
\end{array} \right. 
\end{equation*}
Importantly, for values of  $\gamma \in (0,1]$, the inequality $\tau \leq (1+\gamma)/4$  gives a sharp criterion for the existence of monotone wavefronts. Consequently, in such a case, the above mentioned heuristic criterion holds. 
A question left unanswered in \cite{TPT} concerns the presence of non-monotone and non-oscillating wavefronts (and, more generally, semi-wavefronts) for the values  $\gamma > 1$ and $\tau > \gamma/(1+\gamma)$. In this context, Theorem \ref{T20IN} explains phenomenon numerically observed in \cite[Figure 1]{TPT} for the values $\gamma=40, \tau =9$. 

Theorem \ref{T20IN} (as well as  Theorem \ref{T30IN}  below) will be proved in Section 5 with the help of a) Mallet-Paret and H. Smith theory of monotone cyclic feedback systems \cite{El, MPSm, MPSe} and b) the singular perturbation theory developed by Faria {\it et al} in \cite{fhw,FTnl}. The latter theory 
provides a rigorous justification of the Canosa method \cite{Can,GL,Mur,OW} for the case of equations incorporating spatiotemporal effects.  In \cite{Can}, Canosa constructed an analytic approximation of the monotone wavefront for the classical KPP-Fisher equation, which  is   highly  accurate for all values of $c \geq 2$, although theoretically valid only for small  $\epsilon:=  c^{-2}  \ll 1$ .  This allowed Murray to  observe in \cite{Mur} that {\it `It is an encouraging fact that asymptotic solutions with `small' parameters ... frequently give remarkably accurate solutions'}. Now, it is also known that in models with spatiotemporal effects,  the wavefronts propagating with smaller speeds generally have better monotonicity and convergence properties
%characteristics 
than the wavefronts propagating with  bigger speeds, cf. \cite{HTa,HTb}. So, taking into account all  these arguments  and numerical simulations in \cite{GC}, we conjecture that the food-limited model (\ref{i1IN}) with the kernel (\ref{kernelIN}) cannot have proper semi-wavefronts (i.e. we conjecture that always  $\phi_c(+\infty) =1$). It is also clear from Theorem \ref{T20IN} that, in difference with some Mackey-Glass type equations \cite{LLLM},  model (\ref{i1IN})  cannot have waves whose  profiles oscillate `chaotically' around 1 at $+\infty$. 

\vspace{2mm}

Similarly,  we can establish the existence of non-monotone non-oscillating wavefronts in the linearly 
determined domain $0< \tau < (1+\gamma)/e$ of parameters $(\gamma,\tau)$  
for the food-limited model with single discrete delay
\begin{equation}\label{i1dIN}
\partial_tu(t,x)=\partial_{xx}u(t,x) +u(t,x)\left(\frac{1-u(t-\tau,x)}{1+\gamma u(t-\tau,x)}\right), \quad x \in \mathbb R.
\end{equation}
To give more complete description of the possible shapes of wavefronts,  we recall  the definition of
sine-like slowly oscillating profile \cite{HTa,mps,mps2,TTT}:
\begin{definition} Set $h:= c\tau,\  \mathbb{I} = [-h,0] \cup \{1\}$. For each
$v \in C(\mathbb{I})\setminus\{0\}$ we define the number of sign
changes by $$\hspace{-1mm} {\rm sc}(v) = \sup\{k \geq 1:{\rm  there
\ are \ } t_0 <
 \dots < t_k  \ {\rm such \ that\ }
v(t_{i-1})v(t_{i}) <0 {\rm \ for \ }  i\geq 1\}. $$ We set ${\rm
sc}(v) =0$ if $v(s) \geq 0$ or  $v(s) \leq 0$ for $s \in
\mathbb{I}$. If $\phi$ is a non-monotone semi-wavefront profile  to (\ref{i1dIN}), we set \ $(\varphi_t)(s) =
\phi(t+s)- 1$ if $s \in [-h,0]$, and $(\varphi_t)(1) =
\phi'(t)$. We will say that $\phi(t)$  is sine-like slowly oscillating on a connected interval $\mathfrak{I}$
 if the following conditions are satisfied: (d1) $\phi$ oscillates around $1$ and has exactly one critical point between each two
 consecutive intersections with  level $1$; (d2) for each $t \in \mathfrak{I}$, it holds  that either sc$(
\varphi_t)=1$ or sc$(\varphi_t)=2$.
\end{definition}

Note that  if $\phi$ sine-like slowly oscillates on some interval $\mathfrak{I}$ and  if $\{Q_j\}_{j \geq 1},$ $ Q_j \in \mathfrak{I}$ denotes
the increasing sequence of all moments $Q_j$ where  $\varphi(Q_j)=1$, then  $Q_{j+2} - Q_j \geq h$ for all $j \geq 1$. Thus 
every open time interval  of  length $h$  can contain at most two points at which the graph
of $\varphi = \varphi(t)$  crosses level 1. 

Observe also that the uniqueness conclusion  in the next theorem is much stronger than in Theorem \ref{T12}. 
\begin{thm} \label{T30IN} For each fixed triple of parameters $c \geq 2,\ \tau \geq 0, \ \gamma \geq 0$, equation (\ref{i1dIN}) has a unique (up to translation) positive semi-wavefront 
$u(t,x) = \phi_c(x +ct)$. The profile $\phi_c(t)$ is either eventually monotone or is sine-like slowly oscillating around $1$ at $+\infty$.   Next, 
for each $0< \tau < (1+\gamma)/e$ such that 
$$
\zeta := \max_{a\in [0,1]} a \exp\left(\tau + \frac{(1-a)\tau}{1+\gamma a}\right)\left(\frac{1+a\gamma}{1+a\gamma e^{\tau}}\right)^{1+1/\gamma} >1, 
$$
there exists  $\hat c(\tau,\gamma)\geq 2$ such that for each $c \geq \hat c(\tau,\gamma)$ equation (\ref{i1dIN}) has 
a  positive wavefront propagating with the speed $c$ and whose profile $\phi_c(t)$ is eventually monotone at $\pm \infty$ and is non-monotone on $\R$. In fact, $|\phi_c(\cdot)|_\infty \geq \zeta >1$. 
\end{thm}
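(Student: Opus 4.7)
My plan is to split the theorem into four essentially independent tasks: existence, uniqueness, the monotone-versus-slow-oscillation dichotomy, and the constructive existence of an overshooting non-monotone wavefront. For the single-delay kernel $K(s,y)=\delta(s-\tau)\delta(y)$ one has $N_c(s)=\delta(s-c\tau)$, so equation (\ref{twe2an}) reduces to the second-order DDE
\[
\phi''(t)-c\phi'(t)+\phi(t)\frac{1-\phi(t-h)}{1+\gamma\phi(t-h)}=0, \qquad h=c\tau,
\]
and since $G^\ast=G(0)=1$, the existence of a positive semi-wavefront for every $c\ge 2$ is granted by Theorem \ref{Te1}. For uniqueness I would adapt the approach of \cite{HT,ST} that already motivates Theorem \ref{T12}: every positive semi-wavefront shares the asymptotics $\phi(t)\sim Ae^{\lambda_1(c)t}$ at $-\infty$, where $\lambda_1(c)$ is the smaller positive root of $\lambda^2-c\lambda+1=0$; after normalising the amplitude by translation, the strict decrease of $G$ together with the pointwise sub-tangency $uG(v)\le u$ permits a sliding/contact argument on the ratio $\phi_1/\phi_2$ which forces equality. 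The purely local (single-delay) structure of the feedback is essential here and explains why this uniqueness is stronger than the one in Theorem \ref{T12}.

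For the dichotomy I would write the profile equation as a first-order system $(\phi,\phi')$ and view it as a strictly negative cyclic feedback system in the sense of Mallet--Paret--Sell and Mallet--Paret--Smith \cite{MPSe,MPSm}. The associated discrete Lyapunov functional is non-increasing, and together with boundedness of the semi-wavefront it forces a Poincar\'e--Bendixson-type picture: either the orbit converges eventually monotonically to $1$, or it becomes asymptotically periodic and sine-like slowly oscillating, with the number of sign changes of $\varphi_t$ on each interval of length $h$ never exceeding two, as required by Definition 4.

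The main novelty is the last assertion. Following Canosa's ansatz and the rigorous singular-perturbation programme of \cite{fhw,FTnl}, I rescale $\xi=t/c$ and set $\epsilon=c^{-2}$, so the profile equation becomes
\[
\epsilon w''(\xi)-w'(\xi)+w(\xi)\frac{1-w(\xi-\tau)}{1+\gamma w(\xi-\tau)}=0,
\]
whose formal limit as $\epsilon\to 0$ is the first-order DDE
\[
w'(\xi)=w(\xi)\,\frac{1-w(\xi-\tau)}{1+\gamma w(\xi-\tau)}.
\]
A positive entire solution of this reduced equation with $w(-\infty)=0$ and $w$ bounded attains its first local maximum at some point $\xi_M$ where $w(\xi_M-\tau)=1$. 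Parametrising the orbit by $a:=w(\xi_M-2\tau)\in[0,1]$ and integrating the identity $(\ln w)'=G(w(\cdot-\tau))$ over $[\xi_M-2\tau,\xi_M]$, with a separation of variables in the range $w(\cdot-\tau)\in[a,1]$, yields exactly
\[
w(\xi_M)=a\exp\!\left(\tau+\frac{(1-a)\tau}{1+\gamma a}\right)\!\left(\frac{1+a\gamma}{1+a\gamma\,e^{\tau}}\right)^{1+1/\gamma};
\]
the supremum over $a\in[0,1]$ is precisely $\zeta$, so the hypothesis $\zeta>1$ produces a reduced orbit that overshoots strictly above $1$.

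To lift the overshoot to the full equation for $c$ large I would invoke the invariant-manifold machinery of \cite{fhw,FTnl}: for $\epsilon$ sufficiently small the unique semi-wavefront from the first part lies $O(\epsilon)$-close, in a suitable weighted $C^1$-norm, to the overshooting reduced orbit, so $|\phi_c|_\infty\ge\zeta-o(1)>1$. Monotonicity of the tails is a linearisation argument: at $0$ the two roots of $\lambda^2-c\lambda+1=0$ are real positive for $c\ge 2$, while at $1$ the dominant slow exponents are governed, under the scaling $\mu=\nu/c$, by $(1+\gamma)\nu=-e^{-\nu\tau}$, whose real roots exist precisely when $\tau\le(1+\gamma)/e$; hence the hypothesis $\tau<(1+\gamma)/e$ forces the dominant root to be real negative and secures eventual monotonicity at $+\infty$. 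The main obstacle is exactly this lifting step: proving that the non-monotone bump of the reduced equation survives the singular perturbation without being smeared by the $\epsilon w''$ term, and matching the inner boundary-layer expansion to the outer reduced orbit uniformly on $\R$.
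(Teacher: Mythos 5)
Your overall architecture coincides with the paper's: the same four-part decomposition, the same reduction via $\epsilon=c^{-2}$ to the limit equation (\ref{defs}), the same two-delay-interval integration of $(\ln w)'=G(w(\cdot-\tau))$ producing $\zeta$, and the same lifting via the Faria--Trofimchuk theory (Theorem \ref{TF}). Two smaller remarks first: the formula for $w(\xi_M)$ you claim to obtain ``exactly'' by separation of variables cannot be exact (the substitution $v=w(s-\tau)$ brings in $w(s-2\tau)$ and does not close); the paper's Lemma \ref{30} obtains it only as a lower bound, using $u^*(s-\tau)\le ae^{s}$ on the second interval, which is all that is needed since $\zeta>1$ still forces an overshoot. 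Also, to invoke Theorem \ref{TF} you must verify hypothesis (H3), i.e.\ global attractivity of the equilibrium $1$ for the reduced delay equation; this is a genuine hypothesis for delay equations and the paper gets it from the $3/2$-type criterion of \cite[Example 1.4]{lprtt}, valid since $\tau<(1+\gamma)/e<1.5(1+\gamma)$.

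The genuine gap is in your treatment of eventual monotonicity at $+\infty$. You argue that since $\tau<(1+\gamma)/e$ the characteristic equation $(1+\gamma)\nu=-e^{-\nu\tau}$ at the equilibrium $1$ has real negative dominant roots, and that this ``secures eventual monotonicity.'' It does not: the existence of real dominant roots is compatible with the profile converging to $1$ along a complex (hence oscillatory) mode, if the coefficients of the real modes in the asymptotic expansion happen to vanish. Even for the reduced first-order equation the paper must rule this out, using Cao's theory of super-exponential solutions \cite{Cao} together with the fact (from \cite[Theorem 6.1]{mp}) that every non-real root has imaginary part exceeding $2\pi/\tau$, so that convergence along such a mode would force rapid oscillation, contradicting slow oscillation. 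Worse, for the second-order equation (\ref{def}) Cao's theory is not available, and this is exactly the content of the paper's Lemma \ref{28}: one computes the residues $C_j(\epsilon)=\tilde R(z_j(\epsilon),\epsilon)/\chi'(z_j(\epsilon),\epsilon)$ at the real roots $z_j(\epsilon)$ of (\ref{ehe}) and argues by continuity in $\epsilon$ that a real mode has a non-vanishing coefficient for all small $\epsilon$. You have also misplaced the main difficulty: the survival of the overshoot under the singular perturbation is immediate from the uniform convergence $\phi(t,\epsilon)\to u^*(t)$ on $\R$ furnished by Theorem \ref{TF}; the hard step is precisely the tail monotonicity of the perturbed profile, for which your proposal contains no argument.
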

\begin{figure}[h] \label{FF3}
\centering \fbox{\includegraphics[width=9cm]{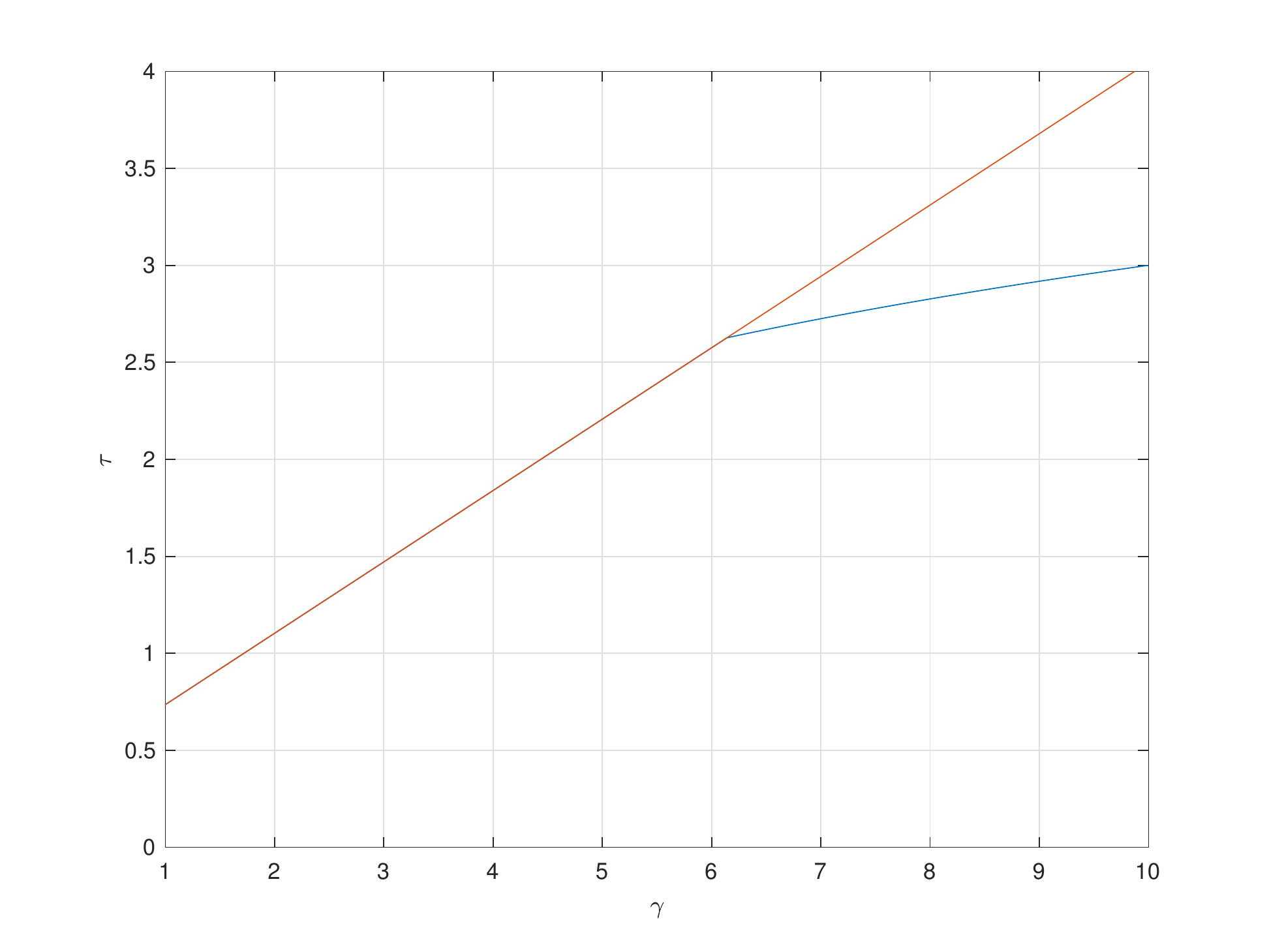}}
\caption{\hspace{0cm} Triangular domain between two curves in the top-right corner corresponds to the parameters $\gamma, \tau$ satisfying assumptions of Theorem \ref{T30IN}.}
\end{figure}
On Figure 3, we present the subset of parameters $(\gamma,\tau)$ lying in the rectangle  $[1,10]\times[0,4]$ and  satisfying the assumptions of Theorem \ref{T30IN}.

In view of  Ducrot and Nadin work \cite{ADN} on (\ref{i1dIN}) with $\gamma =0$ and  Mallet-Paret and Sell theory in \cite{mps}, we  conjecture that, in full analogy with the statement of Theorem \ref{T20IN},  the wave profiles $\phi_c(t)$ provided by Theorem \ref{T30IN} 
cannot oscillate `chaotically' around 1 at $+\infty$ and should   
either converge to $1$ as $t\to +\infty$, or  approach a non-trivial periodic regime  at $+\infty$.

As far as we know, the food-limited equations (\ref{i1IN}) and  (\ref{i1dIN}) are the first scalar models  coming from applications where untypical behavior due to the presence of non-monotone non-oscillating wavefronts is established analytically.  Between previous studies, we would like to mention numerical simulations in \cite{TPT} and the theory developed  in \cite{IGT}  for the "toy" example  of the Mackey-Glass type diffusive equation with a single delay.  In view of the argumentation exposed in \cite[Subsection 2.3]{GTLMS} and also   in this work, we conjecture  that the celebrated Nicholson's diffusive equation with a  discrete delay (i.e.    taking  ${F((K*u )(t,x))}: = pu(t-\tau,x)e^{u(t-\tau,x)}$ in  (\ref{Mg}))  possesses non-monotone non-oscillating wavefronts when  $p > e^2$ and $\tau$ is bigger than certain critical value.

\section{Some auxiliary results } \label{ETW}
Following \cite{HK,HTa}, we consider equation (\ref{twe2an}) together with   
\begin{eqnarray} \label{twe2m} &&
\phi''(t) - c\phi'(t) + g_\beta(\phi(t)){G((N_c*\phi)(t))} =0,  
\end{eqnarray}
where  the continuous piece-wise linear function $g_\beta, \ \beta >1,$ is given by 
\begin{eqnarray}\label{G}
g_{\beta}(u)=\left\{\begin{array}{cc} u,& u \in [0,\beta], \\    \max\{0,2\beta -u\},
& u>  \beta.\end{array}\right. 
\end{eqnarray}
Observe that equation (\ref{twe2m}) has two isolated equilibria $\phi(t) \equiv 0$ and $\phi(t) \equiv 1$ as well as an interval $[2\beta, +\infty)$ of constant solutions.  
We have the following 
\begin{lem} \label{beA} Assume that $\phi, \ \phi(-\infty) =0,$ is a non-negative, bounded and non-constant solution to
(\ref{twe2m}). Then $\phi(t) \leq 2\beta$ for all $t \in \R$.  Next, if  either $t_0$ is a point of local maximum for $\phi(t)$ with $\phi(t_0) < 2\beta$ or  $t_0$ is the smallest number such that $\phi(t_0) =2\beta$, then $(N_c*\phi)(t_0) \leq 1$.  
\end{lem}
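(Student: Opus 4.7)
The plan exploits two structural features of equation (\ref{twe2m}): first, the cut-off satisfies $g_\beta(u)=0$ for all $u\ge 2\beta$, which trivialises the reaction term in any ``overshoot region''; second, hypothesis (\ref{Mo}) is equivalent to the rule $G(u)\ge 0 \Longleftrightarrow u\le 1$, which lets one convert a sign condition on $G((N_c*\phi)(t_0))$ into the desired pointwise bound on its argument. I would therefore prove the global bound $\phi\le 2\beta$ by an overshoot analysis and then derive the convolution inequality from the second-derivative test at a local maximum.

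For the first claim, I would set $E:=\{t\in\R:\phi(t)>2\beta\}$ and show $E=\emptyset$. On $E$, equation (\ref{twe2m}) reduces to the linear equation $\phi''(t)-c\phi'(t)=0$, so on each connected component of $E$ the profile has the explicit form $\phi(t)=(A/c)e^{ct}+B$ (or $At+B$ when $c=0$). The possible shapes of a component $(a,b)$ can then be ruled out in turn: the case $a=-\infty$ contradicts $\phi(-\infty)=0<2\beta$; on a bounded component continuity forces $\phi(a)=\phi(b)=2\beta$, but since a non-constant exponential or affine function attains each value at most once, one must have $\phi\equiv 2\beta$ on $(a,b)$, contradicting $\phi>2\beta$ there; on a component $(a,+\infty)$, boundedness at $+\infty$ forces the exponential coefficient to vanish and $\phi(a)=2\beta$ then pins $\phi\equiv 2\beta$ on $(a,+\infty)$, again a contradiction.

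For the second claim I would unify the two scenarios by observing that $t_0$ is a local maximum of $\phi$ in both: directly in the first case, and in the second case because the global bound just proved makes $\phi(t_0)=2\beta$ a global maximum. Hence $\phi'(t_0)=0$, $\phi''(t_0)\le 0$, and moreover $g_\beta(\phi(t_0))>0$ (equal to $\phi(t_0)>0$ in the first case and to $2\beta$ in the second). Substituting into (\ref{twe2m}) at $t_0$ yields
\begin{equation*}
g_\beta(\phi(t_0))\,G((N_c*\phi)(t_0))=-\phi''(t_0)\ge 0,
\end{equation*}
so $G((N_c*\phi)(t_0))\ge 0$, and the equivalence $G(u)\ge 0\Longleftrightarrow u\le 1$ supplied by (\ref{Mo}) gives exactly $(N_c*\phi)(t_0)\le 1$.

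The main obstacle I anticipate is the degenerate sub-case $\phi(t_0)=0$ in the first alternative, where $g_\beta$ vanishes at $t_0$ and the displayed identity becomes vacuous. I would dispose of it by observing that a non-negative, non-constant $\phi$ with $\phi(-\infty)=0$ cannot have an interior local maximum at the value zero: such a $t_0$ forces $\phi\equiv 0$ on a neighbourhood, and, because $g_\beta(0)=0$ reduces the equation on any zero-set of $\phi$ to $\phi''=c\phi'$, a standard continuation argument (of the same flavour as in Step~1) propagates $\phi\equiv 0$ to the whole line, contradicting non-constancy. With this sub-case excluded, the proof reduces to the two clean arguments sketched above.
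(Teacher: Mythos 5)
Your treatment of the first claim is fine and essentially equivalent to the paper's (the paper integrates $\phi''=c\phi'$ to get $\phi'(t)=\phi'(t_*)e^{c(t-t_*)}$ and concludes $\phi(+\infty)=+\infty$; your explicit-solution case analysis of the components of $\{\phi>2\beta\}$ achieves the same thing). The problem is in the second claim, where you have misread the cut-off (\ref{G}): for $u>\beta$ one has $g_\beta(u)=\max\{0,2\beta-u\}$, so $g_\beta(2\beta)=0$, \emph{not} $2\beta$ as you assert. (Your parenthetical for the first case is also slightly off: if $\beta<\phi(t_0)<2\beta$ then $g_\beta(\phi(t_0))=2\beta-\phi(t_0)$, still positive, so no harm there.) In the case $\phi(t_0)=2\beta$ your displayed identity becomes $0=-\phi''(t_0)$, which carries no information about $G((N_c*\phi)(t_0))$, and the conclusion $(N_c*\phi)(t_0)\le 1$ does not follow. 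This is precisely why the paper does \emph{not} apply the second-derivative test at $t_0$ in that case: it instead produces a sequence $t_j\uparrow t_0$ with $\phi(t_j)<2\beta$ (so $g_\beta(\phi(t_j))>0$), $\phi'(t_j)>0$ and $\phi''(t_j)<0$; at such points the equation forces $G((N_c*\phi)(t_j))>0$, hence $(N_c*\phi)(t_j)<1$ by (\ref{Mo}), and continuity of $N_c*\phi$ gives $(N_c*\phi)(t_0)\le 1$ in the limit. (The existence of such $t_j$ uses that $t_0$ is a global maximum with $\phi<2\beta$ to its left: if no such sequence existed, $\phi'$ would be non-decreasing wherever positive near $t_0^-$, forcing $\phi'(t_0)>0$, a contradiction.) You need to replace your unified second-derivative argument by this approximation-from-the-left argument in the $\phi(t_0)=2\beta$ case.

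Your handling of the degenerate sub-case $\phi(t_0)=0$ is a reasonable patch (the cleanest version: $\phi$ solves a linear second-order ODE $y''-cy'+a(t)y=0$ with bounded continuous $a$, and $\phi(t_0)=\phi'(t_0)=0$ forces $\phi\equiv 0$ by uniqueness, contradicting non-constancy); the paper defers this positivity statement to Lemma \ref{bebe} and simply takes $\phi(t_0)>0$ for granted here.
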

\begin{proof} 
On the contrary, suppose that there exists a maximal interval  $(t_0, t_1)$, such that   $\phi(t) > 2\beta = \phi(t_0)$ for all $t \in (t_0, t_1)$. Then  $\phi'(t_*) > 0, \phi(t_*) > 2\beta$ for some $t_* \in (t_0,t_1)$. { It follows from (\ref{twe2m}) and the definition of $g_\beta$ that }
$\phi''(t) = c\phi'(t)$ for all  $\ t \in (t_0,t_1)$.  Hence, $\phi'(t) = \phi'(t_*)e^{c(t-t_*)} >0,$ $t \in (t_0,t_1)$ and therefore $t_1= +\infty$, $\phi(+\infty) = +\infty$,  contradicting the boundedness of  $\phi$.

If   $t_0$ is a point of local maximum of $\phi(t)$,  then $\phi'(t_0) = 0,$
 $\phi''(t_0) \leq  0$. If, in addition, $\phi(t_0) <2\beta$ then $g_\beta(\phi(t_0)) >0$ and  thus (\ref{Mo}) and (\ref{twe2m}) 
assures that  $(N_c*\phi)(t_0) \leq 1$. Now, if $t_0$ is the smallest number such that $\phi(t_0) =2\beta$,
 then clearly there exists a sequence 
 $t_j \to t_0,$ $ t_j < t_0, \ j \in \N,$ such that $\phi'(t_j) >0, \ \phi''(t_j) <0, \ \phi(t_j) < 2\beta$. But then 
$(N_c*\phi)(t_j) < 1,$ for all $j$ and therefore also $(N_c*\phi)(t_0) \leq 1$. \qed
\end{proof}
As it is usual for the monostable systems,  solutions to  equations  (\ref{twe2an}) and (\ref{twe2m}) exhibit   
the following separation dichotomy at $\pm\infty$:
\begin{lem} \label{bebe} Assume that $\phi$ is a non-negative, bounded and non-constant solution of 
(\ref{twe2m}) or (\ref{twe2an}). Then $\phi(t) >0, \ t \in \R$. If, in addition, $\phi(t_n) \to 0$ along some sequence $t_n \to -\infty,$ then there exists $\rho$ such that $\phi(t)$ is  increasing on some interval $(-\infty, \rho], \ \phi(-\infty) = 0,$  
$
\liminf_{t \to +\infty} \phi(t) >0, 
$
and $c \geq 2\sqrt{G(0)}$. 
\end{lem}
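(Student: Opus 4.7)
The plan is to argue in three stages. First, I rewrite either profile equation as a linear second-order ODE $\phi''-c\phi'+q(t)\phi=0$ with continuous bounded coefficient $q$: for (\ref{twe2an}) set $q(t)=G((N_c*\phi)(t))$, while for (\ref{twe2m}) factor $g_\beta(u)=u\,m(u)$ with continuous bounded $m$ (extending by $m(0)=1$ in agreement with $g_\beta'(0^+)=1$) and take $q(t)=m(\phi(t))\,G((N_c*\phi)(t))$. If $\phi(t_0)=0$ for some $t_0\in\R$, then because $\phi\ge 0$ has a minimum at $t_0$ we also have $\phi'(t_0)=0$, and uniqueness for this linear Cauchy problem forces $\phi\equiv 0$, contradicting non-constancy. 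Thus $\phi>0$ on all of $\R$.

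For the second stage, assume $\phi(t_n)\to 0$ with $t_n\to -\infty$ and consider, for any sequence $\sigma_n\to -\infty$, the translates $\phi_n(\cdot):=\phi(\cdot+\sigma_n)$. The equation and $\phi\in L^\infty(\R)$ yield a priori bounds on $\phi',\phi''$; combined with $N_c\in L^1(\R)$ and dominated convergence, this makes $\{\phi_n\}$ precompact in $C^1_{\mathrm{loc}}(\R)$ and lets the nonlocal term pass to the limit, so every subsequential limit is a bounded non-negative solution of the same profile equation. Choosing $\sigma_n=t_n$ yields a limit vanishing at $0$, which by Stage 1 must be identically $0$; hence $\phi(t_n+\cdot)\to 0$ locally uniformly. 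Now suppose for contradiction that $\limsup_{t\to -\infty}\phi(t)=\mu>0$: interlacing with the sequence $t_n$ produces local maxima $\sigma_n\to -\infty$ with $\phi(\sigma_n)\ge \mu/2$, at which the equation together with (\ref{Mo}) (and Lemma \ref{beA} in the case of (\ref{twe2m})) forces $(N_c*\phi)(\sigma_n)\le 1$. A second application of translation compactness along the $\sigma_n$ produces a non-trivial bounded positive entire solution $\tilde\phi$ of the profile equation. Because the windows around $t_n$ on which $\phi$ is uniformly small persist in a controlled way under shifts and interlace with those around the $\sigma_n$, $\tilde\phi$ also admits a shift sequence along which it tends to $0$; iterating Stage 1 on $\tilde\phi$ forces $\tilde\phi\equiv 0$, a contradiction.

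In the third stage I use the asymptotic linearization. Once $\phi(-\infty)=0$ is known, dominated convergence gives $(N_c*\phi)(t)\to 0$ and hence $q(t)\to G(0)>0$ as $t\to -\infty$. Viewing $\phi>0$ as a solution of the asymptotically autonomous linear equation $\phi''-c\phi'+q(t)\phi=0$ with limit coefficient $G(0)$, a standard Sturm comparison with $\phi''-c\phi'+G(0)\phi=0$ shows that $c<2\sqrt{G(0)}$ would make the characteristic roots complex with positive real part, forcing $\phi$ to change sign on any sufficiently negative left half-line, which contradicts positivity. Hence $c\ge 2\sqrt{G(0)}$. The same comparison, together with the fact that positive solutions of the limit equation vanishing at $-\infty$ are strictly increasing, yields $\phi'(t)>0$ on a half-line $(-\infty,\rho]$. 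Finally, if $\liminf_{t\to +\infty}\phi(t)=0$, the translation-compactness argument applied at $+\infty$ forces $\phi(+\infty)=0$, so $q(t)\to G(0)>0$ also as $t\to +\infty$; the characteristic roots $\tfrac{c\pm\sqrt{c^2-4G(0)}}{2}$ of the limit equation at $+\infty$ then have strictly positive real parts (since $c\ge 2\sqrt{G(0)}>0$), so no exponentially decaying mode is available, contradicting $\phi(+\infty)=0$ for a positive solution.

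The principal obstacle is the second stage: excluding non-trivial positive translation-limits at $-\infty$ requires careful control of the convolution under the limit and the iterative use of the rigidity imposed by (\ref{Mo}); the remaining conclusions follow from the asymptotic linearization at $0$ by classical Sturm-type arguments.
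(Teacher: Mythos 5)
Your Stage 1 is exactly the paper's argument (rewrite the profile equation as a linear Cauchy problem $y''-cy'+a(t)y=0$ and invoke uniqueness), and replacing the Harnack inequality by a translation--compactness argument is, in principle, a viable alternative: the paper applies \cite[Theorem 8.20]{GiTr} to (\ref{hi}) to propagate smallness of $\phi$ from a point to a window $(s_j-R,s_j+R)$, and your compactness argument applied along a sequence where $\phi\to0$ delivers the same conclusion. The genuine gap is that your Stage 2 looks for the contradiction in the wrong place. The rigidity of the equation is felt at the local \emph{minima}: if $\phi$ is not eventually monotone at $-\infty$, interlacing the $t_n$ with points where $\phi$ is not small produces local minima $s_j\to-\infty$ with $\phi(s_j)\to0$, and there (\ref{Mo}) together with $g_\beta(\phi(s_j))>0$, $\phi'(s_j)=0$, $\phi''(s_j)\ge 0$ forces $(N_c*\phi)(s_j)\ge 1$; combining this with the smallness of $\phi$ on $(s_j-R,s_j+R)$ and the global bound $\phi\le 2\beta$ from Lemma \ref{beA} gives $(N_c*\phi)(s_j)<1$, the desired contradiction. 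You instead extract local \emph{maxima} $\sigma_n$ with $\phi(\sigma_n)\ge\mu/2$, where the equation only yields the harmless upper bound $(N_c*\phi)(\sigma_n)\le 1$, and then claim that the translation limit $\tilde\phi$ along $\sigma_n$ inherits a vanishing shift sequence because the small windows around $t_n$ ``persist and interlace''. This does not follow: the gaps $|t_n-\sigma_n|$ may tend to infinity, the small windows then escape to infinity in the moving frame, and $\tilde\phi$ can perfectly well be a positive entire solution bounded away from zero (e.g.\ $\tilde\phi\equiv 1$), so no contradiction with Stage 1 is produced. The same defect recurs in your claim that $\liminf_{t\to+\infty}\phi(t)=0$ forces $\phi(+\infty)=0$.

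Second, the conclusion that $\phi$ is increasing on some $(-\infty,\rho]$ is an explicit part of the statement (and is used later in the paper), and you never actually prove it. In the paper it is the primary target of the contradiction argument --- non-monotonicity at $-\infty$ is what produces the minima $s_j$ --- and $\phi(-\infty)=0$ is then a corollary of eventual monotonicity plus $\phi(t_n)\to 0$. You reverse the order: you (attempt to) prove $\phi(-\infty)=0$ first and then infer monotonicity in Stage 3 from the behaviour of positive solutions of the \emph{limit} equation $\phi''-c\phi'+G(0)\phi=0$. Transferring eventual monotonicity from the autonomous limit to the actual asymptotically autonomous equation requires a Poincar\'e--Perron or Levinson-type asymptotic integration, with some control on the rate at which $a(t)\to G(0)$, which you do not supply and which is delicate precisely in the critical case $c=2\sqrt{G(0)}$. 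Your Sturm non-oscillation argument giving $c^2\ge 4G(0)$ and the ``no decaying mode at $+\infty$'' argument are sound and essentially coincide with the paper's; note only that excluding $c\le-2\sqrt{G(0)}$ (if negative speeds are admitted) additionally uses, as in the paper, that for $c<0$ equation (\ref{hi}) is exponentially stable at $-\infty$, which is incompatible with a bounded nontrivial solution vanishing there.
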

\begin{proof}   Since 
equation (\ref{twe2m})  with $\beta=+\infty$ coincides with  (\ref{twe2an}),  it suffices to consider  equation (\ref{twe2m}) allowing $\beta=+\infty$. 

First, notice that $y=\phi(t)$ is the solution of the following initial value problem 
for a linear second order ordinary differential equation
\begin{equation}\label{hi}
y''(t) -cy'(t) +a(t)y(t)=0, 
\end{equation}
where 
$$
a(t) := \left\{\begin{array}{cc} G((N_c*\phi)(t)),& 0 \leq\phi(t) \leq \beta, \\  
\frac{g_{\beta}(\phi(t))}{\phi(t)} G((N_c*\phi)(t)), & \phi(t)>  \beta,
\end{array}\right.
$$
is a continuous bounded function. Suppose for a moment that $\phi(s)= 0$. Since $\phi(t) \geq 0,$ $t \in \R,$ this 
yields $\phi'(s)=0$. 
But then 
$y(t)  \equiv 0$ due to the uniqueness theorem, a contradiction. Therefore $\phi(t)$ is positive for all $t \in \R$. 
 
In the sequel,  we follow closely the argumentation presented in  \cite[Lemmas 3.7 and 3.9]{BNPR}.
Let us assume that the {second} conclusion of the lemma is false. As $\phi(t_n) \rightarrow 0$ for {$t_n \rightarrow - \infty$} and $\phi(t)$ is
not eventually monotone at $-\infty$, there exists another sequence $s_j \rightarrow  - \infty$ such that $\phi(t)$ attains a local
minimum at $s_j$ and $\phi(s_j) \rightarrow 0$.
Since $a(t)$ is  a continuous bounded function and  $\phi(t)$ is bounded in $C^2(\R)$, 
we can apply the  Harnack inequality, see  \cite[Theorem 8.20]{GiTr},  to equation (\ref{hi}). We can  conclude that for any
$R > 0$ and any $\delta >0$ there exists  $n_0 \in \N$ such that  $0< \phi(t) \leq \delta$ for all $t  \in (s_j -R, s_j + R)$ and $j\geq n_0$. 
In particular,   $g_\beta(\phi(s_j)) = \phi(s_j) >0, \ j \geq n_0$, so that  it follows from (\ref{Mo}) and 
(\ref{twe2m})   that $
 G((N_c*\phi)(s_j)) \leq 0, $  implying $(N_c*\phi)(s_j) \geq 1$. On the other hand, if we take  $\delta < 1/2$ and $R$ sufficiently large to have 
 $\int_{-R}^RN_c(s)ds > 1 - 1/(4\beta)$ and recalling that, by Lemma \ref{beA}, it holds that  $\phi(t) \leq 2\beta$, we obtain the following contradiction:
 $$
 (N_c*\phi   )(s_j) \leq \int_{-R}^R\phi(s_j-s)N_c(s)ds + 2\beta\int_{|s| \geq R}N_c(s)ds < 1/2 +1/2=1. 
 $$
To prove the third conclusion of the lemma, 
let us assume that $
\liminf_{t \to +\infty} \phi(t) =0
$. Then  there exists a sequence $t_k \to   +\infty$ such that $\phi(t_k) \to  0$ so that, following the above reasoning, we may conclude that 
$\phi(+\infty)=0$ and $\phi'(t)<0$ on some  interval $[\varrho, + \infty)$.  Consequently, $a(t) = G(0)+o(1)$ as $t\to \pm \infty$. This shows that 
$c\not=0$, since otherwise $\phi''(t) = - a(t)\phi(t) <0, \ \phi'(t) <0$ for all large positive $t$ implying that $\phi(+\infty) = -\infty$.  On the other hand, if $c>0$ [respectively, $c<0$] then 
equation (\ref{hi}) is exponentially unstable [respectively, stable] at both $+\infty$ and $-\infty$.  This means that $\phi(t)$ can vanish only at one end of the real line, 
being separated from zero at the opposite end of $\R$. Actually, since by our assumption $\phi(-\infty)=0$, this  implies that equation (\ref{hi})  is unstable 
and therefore $c >0$ and $
\liminf_{t \to +\infty} \phi(t) >0. 
$
Furthermore, due to  a classical oscillation theorem by Sturm (e.g. see \cite{JW}), the solution $\phi(t)$ oscillates around $0$ at $-\infty$ once  $a(-\infty) = G(0) > c^2/4$. 
\qed
\end{proof}
In fact, for a  positive solution $\phi(t)$ converging at $+\infty$, the limit value $\phi(+\infty)$ is either $1$ or $2\beta$:
\begin{lem} \label{li} Let a positive   non-constant $\phi$  solve (\ref{twe2m}), $\phi(-\infty)=0$ 
and there exists finite  limit 
$\phi(+\infty)$. Then $\phi(+\infty) \in \{1,2 \beta\}.$    If  $\phi(+\infty) =2\beta$ then $\phi(t) \equiv 2\beta$ on some maximal nonempty interval $[T_1, + \infty)$ and $(N_c*\phi)(T_1) \leq 1$. Furthermore, if $ {2}\beta \int_{-\infty}^0N_c(s)ds >1$ then 
 $\phi(+\infty) =1$.
\end{lem}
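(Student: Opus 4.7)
My plan is to first identify all possible values of $L := \phi(+\infty)$, then show that $L = 2\beta$ forces $\phi$ to actually attain the value $2\beta$ at some finite $T_1$, propagate the equality $\phi \equiv 2\beta$ from $T_1$ onward, and finally use the assumption $2\beta\int_{-\infty}^0 N_c(s)\,ds > 1$ to exclude the case $L = 2\beta$.

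First I would run a standard Barbalat-style asymptotic analysis. Since $\phi$ is bounded by Lemma \ref{beA} and $g_\beta(\phi) G((N_c*\phi))$ is bounded, solving (\ref{twe2m}) for $\phi'$ gives $\phi', \phi''$ bounded on $\R$, so $\phi'$ is uniformly continuous and convergence of $\phi$ forces $\phi'(+\infty) = 0$. Dominated convergence yields $(N_c*\phi)(t) \to L$, and passing to the limit in the equation gives $\phi''(+\infty) = -g_\beta(L)G(L)$; this limit must be $0$ since otherwise $\phi'$ could not tend to $0$. Thus $g_\beta(L)G(L) = 0$, so $L \in \{0, 1\} \cup [2\beta, +\infty)$. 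Combined with $L \leq 2\beta$ from Lemma \ref{beA} and $L > 0$ from Lemma \ref{bebe} (which also supplies $c \geq 2\sqrt{G(0)} > 0$), the only possibilities are $L \in \{1, 2\beta\}$.

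Next I would assume $L = 2\beta$ and set $w := 2\beta - \phi \geq 0$. For $t$ large, $\phi > \beta$, so $g_\beta(\phi) = w$ and (\ref{twe2m}) reduces to the linear equation
\[
w'' - cw' + r(t)\,w = 0, \qquad r(t) := -G((N_c*\phi)(t)) \to -G(2\beta) > 0.
\]
The hard part is then to rule out the possibility $w > 0$ on all of $\R$, i.e.\ that $\phi$ approaches $2\beta$ only asymptotically. I would use the identity $(e^{-ct}w')' = -e^{-ct}r(t)\,w$: for $t$ large its right-hand side is negative, so $e^{-ct}w'$ is eventually strictly decreasing. Barbalat gives $w'(t) \to 0$ and hence $e^{-ct}w'(t) \to 0$, so this eventually decreasing quantity must be non-negative on some $[T_0, +\infty)$. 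But then $w$ would be non-decreasing there, contradicting $w > 0$ and $w \to 0$. Hence $w$ vanishes at some first point $T_1$, where $\phi(T_1) = 2\beta$ and $\phi'(T_1) = 0$; uniqueness for the linear Cauchy problem $w'' - cw' + r(t)w = 0$, $w(T_1) = w'(T_1) = 0$, propagates $w \equiv 0$ to a right neighborhood of $T_1$, and iterating (at any hypothetical right endpoint one again has $\phi = 2\beta, \phi' = 0$, triggering the same uniqueness) extends this to all of $[T_1, +\infty)$. Lemma \ref{beA} applied at the first such $T_1$ then gives $(N_c*\phi)(T_1) \leq 1$.

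Finally, if $2\beta\int_{-\infty}^0 N_c(s)\,ds > 1$ and $L = 2\beta$, then using $\phi \equiv 2\beta$ on $[T_1, +\infty)$ one obtains
\[
(N_c*\phi)(T_1) \geq \int_{-\infty}^0 N_c(s)\cdot 2\beta\,ds > 1,
\]
contradicting $(N_c*\phi)(T_1) \leq 1$, so $L = 1$. The main obstacle is the instability argument in the previous paragraph: it encodes the fact that $\phi = 2\beta$ is an unstable equilibrium of the profile equation (both characteristic roots of the limiting constant-coefficient equation have positive real part $c/2$), but the time-dependent and nonlocal nature of $r(t)$ makes direct asymptotic-integration awkward, so reducing everything to the monotonicity of $e^{-ct}w'$ via the identity $(e^{-ct}w')' = -e^{-ct}r(t)w$ is what makes the argument clean.
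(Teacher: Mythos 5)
Your proof is correct and follows essentially the same strategy as the paper's: both rule out limits other than $1$ and $2\beta$ by noting that the forcing term $g_\beta(\phi)G(N_c*\phi)$ would otherwise have a nonzero limit incompatible with the boundedness and convergence of $\phi$, both use the exponential instability of the operator $\phi''-c\phi'$ at $+\infty$ to show that $\phi(+\infty)=2\beta$ forces $\phi$ to attain $2\beta$ at a finite first time $T_1$ and remain there, and both conclude with the same application of Lemma \ref{beA} and the integral estimate $2\beta\int_{-\infty}^0 N_c(s)\,ds\le (N_c*\phi)(T_1)\le 1$. The only cosmetic differences are that the paper avoids Barbalat's lemma by integrating the equation once (showing $|\phi'(t)|\to\infty$ directly) and argues with the sign of $\phi'$ rather than with $w=2\beta-\phi$, which also lets it bypass your linear-ODE uniqueness step since $\phi'\le 0$ together with $\phi(+\infty)=\sup\phi$ already forces $\phi\equiv 2\beta$.
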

\begin{proof} It follows from Lemmas \ref{beA} and \ref{bebe} that $\phi(+\infty) \in (0, 2\beta]$. In addition, 
if $\phi(+\infty) \not\in \{{1}, 2\beta\},$ then for
$$
r_*(t): = g_\beta (\phi(t)){G((N_c*\phi)(t))},   
$$
we have that 
$$
\lim_{t \to +\infty}r_*(t)   =  g_\beta (\phi(+\infty)){G((\phi(+\infty))}   \not= 0.
$$

However,  in this case  the differential equation 
$\phi''(t)- c\phi'(t) + r_*(t) =0$ does not have any 
convergent bounded solution on $\R_+$. Indeed,  we have that  
$$
|\phi'(t)| = \left|\phi'(s) + c(\phi(t)-\phi(s)) -\int_s^tr_*(u)du\right| \to +\infty \ \mbox{as} \ t \to +\infty. 
$$
Finally, assume that $\phi(+\infty) = 2\beta$, then  there exists $T_1 \in \R$ such that  $r_*(t)\leq 0$ for $t\in [T_1, \infty)$  and thus
$\phi''(t) -c\phi'(t) \geq 0$  for $t \geq T_1$. As a consequence, $\phi'(t) \geq \phi'(s)e^{c(t-s)}$ for $t\geq s \geq T_1$. If $\phi'(s) >0$ for some $s \geq T_1$, we obtain a contradiction:
$\phi'(+\infty) = +\infty$. Therefore we have to analyse the case when $\phi'(s) = 0$ for all $s \geq T_1$
(we can assume that $T_1$ is the smallest number with such a property).  By Lemma \ref{beA}, 
$${2}\beta \int_{-\infty}^0N_c(s)ds =\int_{-\infty}^0\phi(T_1-s)N_c(s)ds \leq (N_c*\phi)(T_1) \leq 1,$$ 
which proves the last statement of the lemma. \qed
\end{proof}
\begin{remark}\label{ir}
Suppose that  $\int_{-\infty}^0N_c(s)ds > 0$. Then we can choose $\beta$ large enough to meet the inequality  $ {2}\beta \int_{-\infty}^0N_c(s)ds >1$. Hence, if $\int_{-\infty}^0N_c(s)ds > 0$ and $\phi(+\infty)$ exists,   we can assume that   $\phi(+\infty) =1$.
\end{remark}
In what follows, we assume that $c \geq 2\sqrt{G(0)}$ and 
\begin{equation}\label{E}
\lambda(c): = \frac{1}{2}(c- \sqrt{c^2-4G(0)})  \leq \mu(c): =  \frac{1}{2}(c+\sqrt{c^2-4G(0)}) 
\end{equation}
will denote the  roots of the characteristic  equation $z^2 -cz + G(0) = 0$ at the zero equilibrium,  note that they both are positive. 

Before proving the next lemma,  we observe that the second differentiability assumption of (\ref{DG}) implies the existence of $L>0$ such that 
\begin{equation}\label{LL}
G(u) \leq L(1-u), \quad u \in [0,1]. 
\end{equation}
\begin{lem} \label{po} Let a non-negative bounded  $\phi \not\equiv 0$ solve either (\ref{twe2m}) or (\ref{twe2an}) and $c \geq 2\sqrt{G(0)}$. Assume also 
that $G(s) <G(0)=G^*$ for all $s >0$. 
Then 
%$$\phi(t) >0,\quad 
$$- \phi'(t)/ \phi(t) > -\lambda(c)%: = 0.5(-c+ \sqrt{c^2-4}),\quad t \in \R
.$$  
If, in addition,  $\phi(-\infty)=0,$
$\phi(t) \leq 1, \ t \in \R,$  then  $\phi'(t) >0$ for all $t \in \R$ and  $\phi(+\infty) = 1$. \\
\end{lem}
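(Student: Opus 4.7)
The plan is to prove the first assertion by contradiction, using the auxiliary function $h(t) := \phi'(t) - \lambda(c)\phi(t)$. Since Lemma \ref{bebe} gives $\phi > 0$ on $\R$, I can rewrite the profile equation as the linear ODE $\phi''(t) - c\phi'(t) + a(t)\phi(t) = 0$ with $a(t) := g_\beta(\phi(t))G((N_c*\phi)(t))/\phi(t)$. A short case analysis, splitting $\phi(t) \in (0,\beta]$, $(\beta, 2\beta)$, $\{2\beta\}$ and using $G(s) < G(0)$ for $s > 0$ together with the positivity $(N_c*\phi)(t) > 0$, yields the strict bound $a(t) < G(0)$ in all three cases.

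Using the identities $\lambda(c)+\mu(c)=c$ and $\lambda(c)\mu(c)=G(0)$, a routine differentiation gives
\[
 h'(t) = \mu(c)\,h(t) + (G(0) - a(t))\phi(t) > \mu(c)\,h(t),
\]
so $h(t)e^{-\mu(c)t}$ is strictly increasing on $\R$. If $h(t_0)\geq 0$ for some $t_0$, then $h(t) > 0$ for every $t > t_0$, giving $\phi'(t) > \lambda(c)\phi(t)$ and thus $\phi(t) \geq \phi(t_0)\exp(\lambda(c)(t-t_0))$. Since $G(0) > 0$ (from (\ref{Mo})) implies $\lambda(c) > 0$, this contradicts the boundedness of $\phi$, finishing Part 1.

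For Part 2, the additional hypothesis $\phi\leq 1$ gives $g_\beta(\phi)=\phi$ and, via $N_c*\phi \leq 1$, also $G((N_c*\phi))\geq 0$. Hence $(\phi'(t)e^{-ct})' = -\phi(t)G((N_c*\phi)(t))e^{-ct} \leq 0$, i.e.\ $\phi'(t)e^{-ct}$ is non-increasing. Lemma \ref{bebe} provides points where $\phi' > 0$; if $t_* := \sup\{t : \phi'(t) > 0\}$ were finite, monotonicity would force $\phi'(t_*) = 0$ and $\phi'\leq 0$ on $[t_*,\infty)$. Then $\phi(+\infty)$ exists, and Lemma \ref{li} combined with $\phi\leq 1 < 2\beta$ forces $\phi(+\infty)=1$, whence $\phi(t_*)=1$ and $\phi\equiv 1$ on $[t_*,\infty)$. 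Plugging back into the equation yields $(N_c*\phi)(t)\equiv 1$ on $[t_*,\infty)$, and with $\phi\leq 1$, $\int N_c = 1$, this propagates the equality $\phi\equiv 1$ to the set $t-\mathrm{supp}(N_c)$ for each $t \geq t_*$. As soon as $\mathrm{supp}(N_c)$ meets $(0,\infty)$, this reaches into the region $(-\infty, t_*)$ where $\phi<1$ by strict monotonicity, yielding the desired contradiction. Therefore $\phi'>0$ on $\R$, and monotone convergence together with Lemma \ref{li} finishes with $\phi(+\infty)=1$.

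I expect the main obstacle to be the final contradiction in Part 2: one must check that the ``transport'' of the equality $\phi\equiv 1$ into $(-\infty, t_*)$ genuinely runs into the zone of strict inequality, which depends on the support structure of $N_c$. The case analysis giving $a(t) < G(0)$ in Part 1 is elementary but necessary, as Lemma \ref{beA} only bounds $\phi$ by $2\beta$, not $\beta$.
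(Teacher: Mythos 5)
Your first part is correct and is essentially the paper's argument in differential rather than integral form: the identity $h'(t)-\mu(c)h(t)=(G(0)-a(t))\phi(t)$ for $h:=\phi'-\lambda(c)\phi$ is exactly the derivative of the paper's representation (\ref{re}), which gives $\phi'(t)-\lambda\phi(t)=-\int_t^{+\infty}e^{\mu(t-s)}{\mathcal N}(\phi)(s)\,ds$ with ${\mathcal N}(\phi)=(G(0)-a)\phi>0$; the paper reads off the sign directly from the integral, you argue by contradiction via blow-up, and both are fine (only note that $\phi>0$ for \emph{constant} solutions needs a one-line separate remark, since Lemma \ref{bebe} is stated for non-constant solutions). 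The opening of your second part also matches the paper: $\phi'\ge 0$ everywhere, hence either $\phi'>0$ on $\R$ or $\phi\equiv 1$ on a maximal half-line $[T_1,+\infty)$, and then $(N_c*\phi)(T_1)=1$, which forces $N_c=0$ a.e.\ on $(0,+\infty)$ by your transport argument.

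The genuine gap is exactly the point you flag at the end: when $N_c$ vanishes a.e.\ on $(0,+\infty)$ your propagation of the equality $\phi\equiv 1$ only reaches points $\ge T_1$ and yields no contradiction, and nothing in the hypotheses of the lemma excludes kernels concentrated on $(-\infty,0]$, so the second assertion remains unproved in precisely that case. The paper closes it with a quantitative estimate near $T_1$: since $\mathrm{supp}\,N_c\subseteq(-\infty,0]$ and $\phi$ is non-decreasing, $(N_c*\phi)(s)\ge\phi(s)$, so by (\ref{LL}) the difference ${\mathcal N}(1)(s)-{\mathcal N}(\phi)(s)=G(0)-\phi(s)[G(0)-G((N_c*\phi)(s))]$ is bounded on $[t,T_1]$ by $(1-\phi(t))[G(0)+L\phi(t)]$; inserting this into (\ref{re}) (resp.\ (\ref{re1}) for the critical speed) gives $0<1-\phi(t)\le (t-T_1)^2(0.5+o(1))(1-\phi(t))[G(0)+L\phi(t)]$ as $t\to T_1^-$, which is absurd. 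Equivalently, $y:=1-\phi$ satisfies $y''-cy'\le Ly$ with $y(T_1)=y'(T_1)=0$ and $y>0$ to the left of $T_1$, and a backward Gronwall argument forces $y\equiv 0$ near $T_1$. Some version of this step is indispensable to complete your proof.
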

\begin{proof}Again, it suffices to consider  equation (\ref{twe2m}) allowing $\beta=+\infty$. %By Lemma \ref{bebe}, $\phi(t)>0$ for all $t \in \R$. 
Suppose now that $\phi$  satisfies (\ref{twe2m}) and $c> 2\sqrt{G(0)}$. 
Set 
$${\mathcal N}(\phi)(t):= G(0)\phi(t)- g_\beta (\phi(t))\,G((N_c*\phi)(t)),
$$ 
then ${\mathcal N}(\phi)(t)> 0$ and 
\begin{equation}\label{re}
\phi(t) = \frac{1}{\mu-\lambda}
\int_t^{+\infty}(e^{\lambda (t-s)}- e^{\mu
(t-s)}){\mathcal N}(\phi)(s)ds,
\end{equation}
where $\lambda :=\lambda(c) < \mu:=\mu(c)$ {are given by (\ref{E}).} 
As a consequence, we have that 
$$
\phi'(t) = \frac{1}{\mu-\lambda}
\int_t^{+\infty}(\lambda e^{\lambda (t-s)}-\mu  e^{\mu
(t-s)}){\mathcal N}(\phi)(s)ds,
$$
and therefore
\begin{equation}\label{bl}
\phi'(t) -\lambda \phi(t) = - \int_t^{+\infty}e^{\mu
(t-s)}{\mathcal N}(\phi)(s)ds < 0. 
\end{equation}
If now $c=2\sqrt{G(0)}$ (i.e. $\lambda = \sqrt{G(0)}$), we find similarly that 
\begin{equation}\label{re1}
\phi(t) =
\int_t^{+\infty}(s-t)e^{\sqrt{G(0)}(t-s)}{\mathcal N}(\phi)(s)ds,
\end{equation}
$$
\quad \phi'(t) =
\int_t^{+\infty}[\sqrt{G(0)}(s-t)-1]e^{\sqrt{G(0)}(t-s)}{\mathcal N}(\phi)(s)ds,
$$
and thus also 
$$
\phi'(t) -\sqrt{G(0)}\phi(t)=  -\int_t^{+\infty}e^{
\sqrt{G(0)}(t-s)}{\mathcal N}(\phi)(s)ds < 0. 
$$
Finally, $0 < \phi(t) \leq 1, \ t \in \R,$ implies  that $\phi''(t) - c\phi'(t) \leq 0$.  As a consequence, $ \phi'(t) \leq \phi'(s)e^{c(t-s)}, \  t \geq s$, so that if $\phi'(s) <0$ at some point $s$ then $\phi''(t) \leq c\phi'(t) <0 $ for all $t >s$. 
This implies $\phi(+\infty)=-\infty$, a contradiction.  Thus $\phi'(t) \geq 0$ for all $t \in \R$ so that, in view of Lemma  
\ref{li}, we obtain $\phi(+\infty)=1$. Next, if $\phi'(s)=0$ at some $s$ then $0 \leq \phi'(t) \leq \phi'(s)e^{c(t-s)}=0$ for all $t >s$ that yields $\phi(t)=1, \ t\geq s$. 

Consequently,  there exists  the leftmost $T_1 \in \R$ such that 
$\phi'(t) =0$ for all $t \geq T_1$ and $\phi'(t) > 0$ for $t < T_1$. But then $( N_c*\phi) (T_1)=1$, which implies $N_c(s) = 0$ a.e. on $\R_+$. Now, observe that both $\phi(t)$ and $1$ satisfy equations (\ref{re}), (\ref{re1}) and that 
${\mathcal N}(\phi)(t)={\mathcal N}(1)(t)=G(0)$ for $t \geq T_1$ and ${\mathcal N}(\phi)(t)= \phi(t)\left[G(0)-G(N_c*\phi)(t))\right]$ for $t \leq T_1$. Let $L>0$ be as in (\ref{LL}).  Then (\ref{re}) implies that, for $t<T_1$ close to $T_1$, $c>2\sqrt{G(0)}$,
$$
0< 1 -\phi(t) = \frac{1}{\mu-\lambda}
\int_t^{T_1}(e^{\lambda (t-s)}- e^{\mu(t-s)})(G(0)-\phi(s)[G(0)-G((N_c*\phi)(s))])ds
$$
$$
\leq \frac{1}{\mu-\lambda}
\int_t^{T_1}(e^{\lambda (t-s)}- e^{\mu(t-s)})(G(0)-\phi(s)[G(0)- L\, (1 - (N_c*\phi)(s))])ds
$$
$$
\leq \frac{1}{\mu-\lambda}
\int_t^{T_1}(e^{\lambda(t-s)}- e^{\mu(t-s)})ds\, (G(0)-\phi(t)[G(0)- L\,(1-\phi(t))])
$$
$$
= \frac{1}{\mu-\lambda}
\int_t^{T_1}(e^{\lambda (t-s)}- e^{\mu(t-s)})ds\, (1-\phi(t))[G(0)+ L\, \phi(t)] 
$$
$$
= (t-T_1)^2(0.5+o(1))(1-\phi(t))[G(0)+L\, \phi(t)], \ t \to T_1^{-}, 
$$
a contradiction. \\
Similarly, for  $c=2\sqrt{G(0)}$, (\ref{re1}) implies that, for $t<T_1$ close to $T_1$,
$$
0< 1-\phi(t) = 
\int_t^{T_1}(s-t)e^{\sqrt{G(0)}(t-s)}(G(0)-\phi(s)[G(0)-G((N_c*\phi)(s))])ds
$$
$$
\leq 
\int_t^{T_1}(s-t)ds\,(1-\phi(t))[G(0) + L\, \phi(t)] 
= 
\frac{(t-T_1)^2}{2}(1-\phi(t))[G(0)+ L\, \phi(t)], \ t \to T_1^{-}, 
$$
a contradiction. This finishes the proof of the lemma.
\qed
\end{proof}
\begin{lem} \label{ogran}  Assume 
that $G(s) <G(0)=G^*$ for all $s >0$. 
Then for each  $c\geq 2\sqrt{G(0)}$ and $N \in \mathcal{S}:= \{Q \in L^1(\R,\R_+): |Q|_1=1\}$ there exists  $U(c,N) \geq 1$ depending only on $c$ and $N$ such that the following holds: if $\phi(t)$, $\phi(-\infty) =0$,  is a positive bounded  solution of the  equation 
\begin{eqnarray} \label{twe2M} &&
\phi''(t) - c\phi'(t) + g_\beta(\phi(t)){G((N*\phi)(t))} =0,  
\end{eqnarray}
with $\beta > U(c,N)$,  then 
\begin{equation}\label{iu}
0< \phi(t) \leq U(c,N), \ t \in \R
\end{equation}
(i.e. the set of all semi-wavefronts to (\ref{twe2m})  is uniformly bounded by a constant which does not depend on a particular semi-wavefront). Moreover, given a fixed pair $(c_0,N_0) \in [2\sqrt{G(0)},+\infty) \times \mathcal{S}$,  we can assume that  the map 
$U: [2\sqrt{G(0)},+\infty) \times \mathcal{S}  \to (0,+\infty)$ is locally continuous at $(c_0,N_0)$. 
\end{lem}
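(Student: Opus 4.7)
My plan is to locate a ``peak point'' of $\phi$ at which Lemma \ref{beA} yields the convolution bound $(N*\phi) \leq 1$, and then turn this into a universal upper bound on $\sup \phi$ via the exponential lower bound $\phi'(t) < \lambda(c)\phi(t)$ provided by Lemma \ref{po}. Setting $M := \sup_t \phi(t)$, we know from Lemma \ref{bebe} that $\phi > 0$ everywhere with $\liminf_{t \to +\infty} \phi > 0$, and from Lemma \ref{beA} that $M \leq 2\beta$. If $M \leq 1$ there is nothing to prove, so assume $M > 1$. I first produce a sequence $t_k$ with $\phi(t_k) \to M$ and $(N*\phi)(t_k) \leq 1$: if $M < 2\beta$ is attained at some finite $t_0$, the point $t_0$ is a local maximum and Lemma \ref{beA} applies directly; if $M = 2\beta$ is attained, I take $t_0$ to be the leftmost such point and invoke the second case of Lemma \ref{beA}; otherwise $M$ is only approached as $t \to +\infty$, and then Lemma \ref{li} either identifies $\phi(+\infty) \in \{1, 2\beta\}$ (reducing to the previous cases) or $\phi$ oscillates at $+\infty$ so that I can extract a sequence of genuine local maxima $t_k$ with $\phi(t_k) \to M$, to which Lemma \ref{beA} still applies.

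Next I invoke Lemma \ref{po}: integrating $\phi'(t)/\phi(t) < \lambda(c)$ from $t_k - s$ to $t_k$ yields $\phi(t_k - s) > \phi(t_k)\, e^{-\lambda(c) s}$ for all $s > 0$. Combined with $(N*\phi)(t_k) \leq 1$ this gives
$$1 \;\geq\; \int_{0}^{+\infty} N(s)\,\phi(t_k - s)\,ds \;\geq\; \phi(t_k)\int_{0}^{+\infty} N(s)\,e^{-\lambda(c)s}\,ds.$$
Setting $I(c, N) := \int_{0}^{+\infty} N(s)\,e^{-\lambda(c)s}\,ds$ and letting $\phi(t_k) \to M$, I obtain $M\,I(c, N) \leq 1$, so the choice
$$U(c, N) := \max\bigl\{1,\; I(c, N)^{-1}\bigr\}$$
will do the job, since requiring $\beta > U(c, N)$ keeps the peak argument clear of the truncation regime of $g_\beta$. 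The local continuity of $U$ follows because $c \mapsto \lambda(c)$ is continuous on $[2\sqrt{G(0)}, +\infty)$ and $(c, N) \mapsto I(c, N)$ is jointly continuous on $[2\sqrt{G(0)}, +\infty) \times \mathcal{S}$ by dominated convergence (using $0 < e^{-\lambda(c)s} \leq 1$ for $s \geq 0$); hence $U$ is continuous in a neighborhood of any $(c_0, N_0)$ with $I(c_0, N_0) > 0$.

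I expect the main obstacle to be the first step in the oscillatory unattained-supremum case, where one must genuinely extract local maxima of $\phi$ converging to $M$; this relies on $\phi$ being bounded and $C^2$ with positive $\liminf$ at $+\infty$, and can be made rigorous by a standard compactness argument. The only truly pathological scenario is $I(c_0, N_0) = 0$, which forces $N_0$ to be supported in $(-\infty, 0]$; this degeneracy does not occur for the kernels $N_c(s) = \int_{0}^{+\infty} K(v, s - cv)\,dv$ produced by the convolution kernels $K$ actually considered in the paper, so the bound $U(c, N)$ is effective throughout the relevant parameter range.
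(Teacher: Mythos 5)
Your argument for the case $\int_0^{+\infty}N(s)\,ds>0$ is essentially the paper's own (Situations I and II of its proof): locate peak points via Lemma \ref{beA}, use $\phi'(t)<\lambda(c)\phi(t)$ from Lemma \ref{po} to get $\phi(t_k-s)>\phi(t_k)e^{-\lambda(c)s}$, and conclude $M\int_0^{+\infty}e^{-\lambda(c)s}N(s)\,ds\le 1$. That part is fine.

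The genuine gap is your dismissal of the degenerate case $I(c,N)=\int_0^{+\infty}e^{-\lambda(c)s}N(s)\,ds=0$, i.e.\ $N$ supported in $(-\infty,0]$. This case is squarely inside the lemma's hypotheses ($N$ ranges over all of $\mathcal{S}=\{Q\in L^1(\R,\R_+):|Q|_1=1\}$) and inside the paper's scope: for $K(s,y)=K_1(y)\delta(s)$ with $K_1$ supported on $\R_-$ one gets $N_c=K_1$ supported on $(-\infty,0]$, and the paper repeatedly treats ``supp\,$N_c\cap(0,+\infty)=\emptyset$'' as a legitimate case (e.g.\ in Section 4). The paper devotes the entire Situation III of its proof, with a separate definition $U(c,N)=2\exp(\lambda(c)(r+\sigma))$ and a delicate interval argument (the constants $0.99$, $1.98$, $1.1$, etc.), precisely to this case; your proposal has no substitute for it. Moreover, your choice $U(c,N)=\max\{1,I(c,N)^{-1}\}$ blows up as $I(c,N)\to 0^+$, so the claimed local continuity of $U$ on $[2\sqrt{G(0)},+\infty)\times\mathcal{S}$ fails near any $(c_0,N_0)$ with $\int_0^{+\infty}N_0(s)\,ds=0$. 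This is not a cosmetic defect: the proof of Theorem \ref{34} approximates a general kernel $N_c$ by compactly supported $N_j$ and needs $U(c,N_j)$ to stay bounded along the approximation even when the limit kernel is supported in $(-\infty,0]$; with your $U$ the uniform bound on the approximating profiles $\phi_j$ would be lost. To repair the proposal you must supply an independent a priori bound in the left-supported case, which is exactly the content of the paper's second formula for $U$ and its Situation III.
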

\begin{proof} Similarly to the previous lemma, here we follow closely \cite{HK}. 
First, we take $U(c,N) \geq 1$ defined by one of the following mutually non-exclusive formulas: 
\begin{itemize}
\item  if $\int_{0}^{+\infty}N(s)ds >0$, then $U(c,N)= \left(\int_{0}^{+\infty}e^{-\lambda(c)s}N(s)ds\right)^{-1}$;
\item if $\int_{0}^{+\infty}N(s)ds < 0.001$, then $U(c,N)= 2\exp{(\lambda(c) (r+\sigma))},$ where $r = r(N) \in \N,$ $\sigma=\sigma(c)>0$ are chosen in such a way that $$\int^{0}_{-r}N(s)ds> 0.99, \quad 
2c\frac{e^{\lambda(c) \sigma} -1}{e^{c \sigma} -1} < 0.01.$$ 
\end{itemize}
Obviously,  such $U: [2\sqrt{G(0)},+\infty) \times \mathcal{S}  \to (0,+\infty)$ is locally continuous at each $(c_0,N_0)$.  For example, $U(c,N)$ can be considered as a constant (hence, continuous) function in some small neighborhood of $(c_0,N_0) \in [2\sqrt{G(0)},+\infty) \times  \mathcal{S}$ satisfying $\int_{0}^{+\infty}N_0(s)ds =0$.

Clearly, if $\phi(t) \in (0,1]$ for all $t \in \R$, then inequality (\ref{iu}) is true because   $U(c,N)\geq 1$. In particular, this happens if 
the profile  $\phi(t)$ is {nondecreasing}  and  $2\beta\int_{-\infty}^0N(s)ds > 1$, see Remark \ref{ir}.

Thus let us suppose that $\phi(t_0) > 1$ at some point $t_0$.  Then at least one of the following three possibilities can occur: 

\noindent \underline{Situation I}. Solution $\phi(t)$ is {nondecreasing}  and  $\int_{-\infty}^0N(s)ds = 0$ (so that  $\int_{0}^{+\infty}N(s)ds =1$).  In such a case, by Lemma \ref{li}, there is  some finite $T_1$ such that 
$\phi(+\infty)=\phi(T_1)=2\beta$ and $(N*\phi)(T_1) \leq 1$. For $x(t):= -\ln\phi(t)$, we have  $ x'(t) = -\phi'(t)/\phi(t) \geq -\lambda(c)$  for all $t\in \R$ 
and 
$$
\int_{0}^{+\infty}e^{-x(T_1-s)}N(s)ds \leq (N*\phi)(T_1)= \int_{\R}e^{-x(T_1-s)}N(s)ds \leq 1.
$$
Now, set $m:= \min_{s\in \R} x(s)$ and observe that $x(t) = x(T_1)-\int^{T_1}_t x'(s)ds \leq m -\lambda(c)(t-T_1)$ for $t \leq T_1$.  Thus 
$$
\int_{0}^{+\infty}e^{-m-\lambda(c)s}N(s)ds \leq \int_{0}^{+\infty}e^{{-}x(T_1-s)}N(s)ds \leq 1
$$
and therefore 
$$
\frac{1}{2\beta} = \frac{1}{\phi(T_1)} = e^m \geq \int_{0}^{+\infty}e^{-\lambda(c)s}N(s)ds. 
$$
Thus we can take 
$$
2\beta= \phi(T_1)  \leq \left(\int_{0}^{+\infty}e^{-\lambda(c)s}N(s)ds\right)^{-1} = U(c,N).
$$
{The latter shows that  Situation I cannot occur if $2\beta > U(c,N)$.}

\vspace{2mm}
 
\noindent  \underline{Situation II}. Solution $\phi(t)$ is not {nondecreasing} %monotone increasing
 { and  $\int_{0}^{+\infty}N(s)ds>0.$} 
Then we can repeat the above arguments to conclude that, for the local maxima $\phi(t_j) >1$ of $\phi$ we have that 
 $$
\sup_{t \in \R}\phi(t) =\sup_j \phi(t_j)  \leq \left(\int_{0}^{+\infty}e^{-\lambda(c)s}N(s)ds\right)^{-1}= U(c,N). 
 $$
\underline{Situation III}. 
{Solution $\phi(t)$ is not {nondecreasing} %monotone increasing 
  and $\int_{0}^{+\infty}N(s)ds=0$.} 
Suppose, on the contrary,  that $\phi(t_0) > U(c,N) = 2\exp{(\lambda(c) (r+\sigma))}$ for some $t_0$.  Then $\phi(t) \geq 2$ on some maximal closed interval $[a,b] \ni t_0$.  We claim that $b-a \geq r+\sigma$.  Indeed, otherwise, since $\phi'(t) \leq \lambda(c) \phi(t), $ $\phi(a) =2,$ $ t_0-a < b- a,$  we get the following contradiction 
$$
\phi(t_0) \leq \phi(a) e^{\lambda(c) (t_0-a)} < 2e^{\lambda(c) (b-a)} \leq   2e^{\lambda(c) (r+\sigma)}. 
$$
In consequence, 
$$
(N*\phi)(t)  \geq  \int_{-r}^0 \phi(t-s)N(s)ds \geq 1.98, \quad t \in [a, a+\sigma], 
$$
so that ${G((N*\phi)(t)) < 0}, \ t \in [a,a+\sigma],$ and $\phi''(t) -c \phi'(t) > 0,$ $\ t \in [a,a+\sigma]$.  In particular, $\phi'(t) > e^{c(t-a)}\phi'(a)$ for all $a  <t \leq a+\sigma$ and thus 
$$
2e^{\lambda (t-a)}  \geq \phi(t) > 2+ \frac 1c \{e^{c(t-a)}-1\}\phi'(a), \quad a  <t \leq a+\sigma. 
$$
Therefore 
$$
2e^{\lambda \sigma} > 2+ \frac 1c \{e^{c\sigma}-1\}\phi'(a), 
$$
so that $0 \leq \phi'(a) <0.01$.  Next, let $[a_-, b_+] \supseteq [a,b]$ be the maximal interval 
where $\phi(t) \geq 1.1$. Then, for all $t \in [a_-, a + \sigma]$, we have $\phi''(t)-c\phi'(t) >0$ since
$$
(N*\phi)(t) \geq  \int_{-r}^0 \phi(t-s)N(s)ds \geq 0.99\cdot 1.1 > 1, \ t \in [a_-, a+\sigma]. 
$$
But then 
$$
\phi'(t) < \phi'(a)e^{c(t-a)} < 0.01e^{c(t-a)}, \ t \in [a_-,a);
$$
$$
\phi(t) > 2- \frac{0.01}{c}\{1-e^{c(t-a)}\} > 2-  \frac{0.01}{c} > 1.9,  \quad t \in [a_-,a), 
$$
a contradiction (since $\phi(a_-) =1.1$). 
\qed
\end{proof}
\begin{cor}
\label{twoe} Assume 
that $G(s) <G(0)=G^*$ for all $s >0$ and let some $c\geq 2\sqrt{G(0)}$ be fixed.  Then for each sufficiently large $\beta >1$  equations (\ref{twe2m}) and (\ref{twe2an}) share the same set of semi-wavefronts propagating at the speed $c$. 
\end{cor}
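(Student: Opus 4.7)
My plan is to choose $\beta$ large enough that every semi-wavefront of either equation stays strictly below the level $\beta$; then the truncation $g_\beta$ acts as the identity along its trajectory, so equations (\ref{twe2m}) and (\ref{twe2an}) reduce to each other on such an orbit, and the two sets of semi-wavefronts must coincide. The sole ingredient I need is the uniform a priori bound produced by Lemma~\ref{ogran}.

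Concretely, I would set $\beta_0 := U(c, N_c)$, applying Lemma~\ref{ogran} to the normalized kernel $N = N_c \in \mathcal{S}$ (which is permissible since $N_c \geq 0$, $N_c \in L^1(\R)$ and $|N_c|_1 = 1$), and then fix any $\beta > \beta_0$. If $\phi$ is a semi-wavefront of (\ref{twe2m}) propagating at speed $c$, then $\phi$ is positive, bounded and satisfies $\phi(-\infty) = 0$; since $\beta > U(c, N_c)$, Lemma~\ref{ogran} yields $\phi(t) \leq U(c, N_c) < \beta$ for every $t \in \R$, so that $g_\beta(\phi(t)) \equiv \phi(t)$ and $\phi$ automatically solves (\ref{twe2an}).

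For the converse inclusion, I would treat (\ref{twe2an}) as the limiting case $\beta = +\infty$ of (\ref{twe2m}), exactly as was done in Lemmas~\ref{bebe} and~\ref{po}. The main obstacle I foresee is verifying that Lemma~\ref{ogran} still produces the bound $U(c, N_c)$ in this regime: Situation~I of its proof terminates with $2\beta = \phi(T_1) \leq U(c, N_c)$, which for $\beta = +\infty$ is simply incompatible with the boundedness of $\phi$ and therefore cannot occur; Situations~II and~III invoke only local arguments at maxima of $\phi$ and the exponential-growth comparison on an interval of length $r + \sigma$, none of which depend on the finiteness of $\beta$. Once this robustness is granted, every semi-wavefront $\phi$ of (\ref{twe2an}) satisfies $\phi(t) \leq U(c, N_c) < \beta$, hence $g_\beta(\phi(t)) \equiv \phi(t)$, so $\phi$ is also a solution of (\ref{twe2m}) and the two sets of semi-wavefronts coincide for all $\beta > \beta_0$.
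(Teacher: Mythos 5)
Your proposal is correct and follows essentially the same route as the paper, whose proof is the one-line observation that, by Lemma~\ref{ogran} and the definition of $g_\beta$, it suffices to take $\beta > U(c,N_c)$. You merely make explicit what the paper leaves implicit, namely that the converse inclusion rests on reading (\ref{twe2an}) as the case $\beta=+\infty$ of (\ref{twe2m}) (the convention already used in Lemmas~\ref{bebe} and~\ref{po}) and on checking that the three situations in the proof of Lemma~\ref{ogran} survive this passage -- a verification you carry out correctly.
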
 
\begin{proof} Due to Lemma \ref{ogran} and the definition of $g_{\beta}(u)$, it suffices to take
$\beta > U(c,N_c)$. \qed
\end{proof}

%%%%%%%%%%%%%%%%%%%%%%%%%%%%%%%%%%%%%%%%%%%%%%%%%%%%%%%%%%%%%%%%%

\section{Existence of semi-wavefronts for $c\geq 2\sqrt{G^*}$} \label{ESW}
\subsection{Proof of Theorem \ref{Te1} in the non-critical case and without the Allee effect.}

\vspace{1mm}

\noindent In this section, we are going to prove Theorem \ref{Te1} in the case when $G^*=G(0) > G(u)$ for all $u > 0$.  By the first assumption of (\ref{DG}), 
 there exists  some positive $p\geq G(0)$ such that 
$$
G(u) \geq G_\#(u):= G(0)- pu, \quad u \in [0,2\beta]. 
$$
From Lemma \ref{bebe}, we know that  the condition 
$c\geq 2\sqrt{G(0)}$ is necessary for the existence of semi-wavefronts. Thus we have to prove only the sufficiency of this inequality. 

First, consider    $$r(\phi)(t):= b\phi(t) + g_{\beta}(\phi(t)){G((N_c*\phi)(t))},$$ where  $g_{\beta}(u)$ is defined by (\ref{G}), $\beta$ is as in Corollary \ref{twoe}, and $b>G(0)- \min_{[0,2\beta]}G(u)$.   In view of Corollary \ref{twoe}, it suffices to establish that the equation  
\begin{equation}
\label{twe2mm} 
\phi''(t) - c\phi'(t)  - b \phi(t) + r(\phi)(t)=0
\end{equation}
has a semi-wavefront.
Observe that if  a continuous function $\psi(t), \ 0 \leq \psi(t)\leq 2\beta,$ satisfies $\psi(s) \leq \beta$ at some point $s \in \R$, then  
\begin{equation}\label{Ar}
r(\psi)(s)= \psi(s)\left[b + G((N_c*\psi)(s))\right] \geq {\psi(s)(b +  \min_{[0,2\beta]}G(u))} \geq \psi(s)G(0)  \geq 0. 
\end{equation}
Now, if   $\beta\leq \psi(s) \leq 2\beta$, then  
$$
r(\psi)(s)=  { b\psi(s)+  (2\beta -\psi(s))G((N_c*\psi)(s))} \geq b\psi(s)+  (2\beta -\psi(s)) \min_{[0,2\beta]}G(u)
$$
\begin{equation} \label{Br}
\geq \beta (2 \min_{[0,2\beta]}G(u) + b- \min_{[0,2\beta]}G(u))> G(0)\beta >0.  
\end{equation}
Furthermore,  the inequality $0 \leq \psi(s)\leq 2\beta,\ s \in \R,$ implies  that 
\begin{equation}\label{Ac}
 r(\psi)(s) =  b\phi(s) + g_{\beta}(\phi(s)){G((N_c*\phi)(s))} \leq 2\beta(b+G(0)). 
\end{equation}
Next, we consider the non-delayed KPP-Fisher equation  $u_t= u_{xx} +G(0)g_{\beta}(u)$.  The profiles $\phi$
of the travelling fronts $u(x,t)= \phi (x+ct)$ for this equation satisfy
\begin{equation}\label{kppm}
\phi''(t) - c\phi'(t)  + G(0)g_{\beta}(\phi(t))=0, \ c \geq 2\sqrt{G(0)}.
\end{equation}
Recall that  $0< \lambda \leq \mu$ denote eigenvalues of equation (\ref{kppm}) linearized around $0$ (i.e. 
$\chi(\lambda) = \chi(\mu) =0$ where $\chi(z) := z^2-cz+G(0)$). In the sequel,  $\phi_+(t)$ will denote the unique monotone front to (\ref{kppm})  normalised (cf. \cite[Theorem 6]{GT})  by the condition $$ \phi_+(t)= (-t)^je^{\lambda t} (1+o(1)), \ t \to -\infty, \ j \in \{0,1\}.$$
Let us note here that  $\phi_+(t)$ satisfies the linear differential equation 
$$
\phi''(t) - c\phi'(t)  + G(0)\phi(t)=0
$$
for all $t$ such that $\phi_+(t) < \beta.$
In particular, if $c >2\sqrt{G(0)}$  then there exists (see e.g. \cite[Theorem 6]{GT}) $C \geq 0$ such that  
\begin{equation} \label{1mo}
 \phi_+(t)= e^{\lambda t}  - C e^{\mu t}, \ t \leq \phi_+^{-1}(\beta).   \end{equation}
 Let $z_1< 0< z_2$ be the  roots of the equation  $z^2 -cz-b =0$. 
Set $z_{12} =z_2-z_1>0$  and consider the  integral operator $\mathcal A$ depending on $b$ and defined by
$$({\mathcal A}\phi)(t) = \frac{1}{z_{12}}\left\{\int_{-\infty}^te^{z_1
(t-s)}r(\phi)(s)ds + \int_t^{+\infty}e^{z_2
(t-s)}r(\phi)(s)ds \right\}.
$$
\begin{lem}  \label{usl} Assume that  $b>G(0)- \min_{[0,2\beta]}G(u)$ and let  $0\leq \phi(t) \leq \phi_+(t)$, then 
$$0 \leq ({\mathcal A}\phi)(t) \leq \phi_+(t).$$
\end{lem}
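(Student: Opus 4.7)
The operator $\mathcal{A}$ is the Green function inverse for $L := \partial_t^2 - c\partial_t - b$, whose characteristic roots are exactly $z_1 < 0 < z_2$, so that solving $\phi = \mathcal{A}\phi$ is equivalent to finding a bounded solution of (\ref{twe2mm}). My plan is to introduce the auxiliary reaction term $r_+(\psi)(s) := b\psi(s) + G(0)\, g_\beta(\psi(s))$ and the companion operator $\mathcal{A}_+$ obtained from $\mathcal{A}$ by replacing $r$ with $r_+$. I will then establish the pointwise sandwich $0 \leq r(\phi)(s) \leq r_+(\phi_+)(s)$ together with a representation identity $\phi_+ = \mathcal{A}_+ \phi_+$; since the Green kernel $e^{z_j(t-s)}/z_{12}$ is positive, both desired inequalities will follow by termwise integration.

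For the identity $\phi_+ = \mathcal{A}_+\phi_+$, I would rewrite (\ref{kppm}) as $L\phi_+ = -r_+(\phi_+)$ and observe that $\mathcal{A}_+\phi_+$ is also a bounded solution on $\R$ of this nonhomogeneous linear equation. The homogeneous solutions $c_1 e^{z_1 t} + c_2 e^{z_2 t}$ are bounded on all of $\R$ only when $c_1 = c_2 = 0$, so the bounded solution is unique and the identity follows. The lower bound $(\mathcal{A}\phi)(t) \geq 0$ is then immediate: since $0 \leq \phi \leq \phi_+ \leq 2\beta$ on $\R$, the estimates (\ref{Ar})--(\ref{Br}) proved earlier give $r(\phi)(s) \geq 0$ for every $s$, and positivity of the kernel delivers the claim.

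For the upper bound, I would first discard the nonlocal factor using $G \leq G^* = G(0)$ together with $g_\beta \geq 0$, obtaining $r(\phi)(s) \leq b\phi(s) + G(0)g_\beta(\phi(s))$. The key remaining step, and the one I expect to be the principal obstacle, is the monotonicity claim that the scalar map $\psi \mapsto b\psi + G(0) g_\beta(\psi)$ is nondecreasing on $[0, 2\beta]$; combined with $\phi \leq \phi_+$, this will yield $r(\phi)(s) \leq r_+(\phi_+)(s)$, and integrating against the positive kernel then gives $(\mathcal{A}\phi)(t) \leq (\mathcal{A}_+\phi_+)(t) = \phi_+(t)$. The piecewise-linear structure of $g_\beta$ produces slopes $b + G(0)$ on $[0,\beta]$, $b - G(0)$ on $[\beta,2\beta]$, and $b$ on $[2\beta,\infty)$, so monotonicity reduces to the single inequality $b > G(0)$. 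To extract this from the hypothesis $b > G(0) - \min_{[0,2\beta]}G$, I would note that $\beta > 1$ forces $2\beta > 1$, and by (\ref{Mo}) there are $u \in (1,2\beta]$ with $G(u) < 0$; hence $\min_{[0,2\beta]}G < 0$ and consequently $b > G(0) - \min_{[0,2\beta]}G > G(0)$, which is precisely what is needed.
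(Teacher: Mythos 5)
Your proof is correct and follows essentially the same route as the paper: nonnegativity of $r(\phi)$ from (\ref{Ar})--(\ref{Br}) for the lower bound, and for the upper bound the comparison $r(\phi)\le b\phi+G(0)g_\beta(\phi)\le b\phi_++G(0)g_\beta(\phi_+)$ followed by the identity $\mathcal{A}_+\phi_+=\phi_+$. You additionally supply two details the paper leaves implicit -- the uniqueness-of-bounded-solutions argument behind $\mathcal{A}_+\phi_+=\phi_+$, and the deduction $b>G(0)$ (via $\min_{[0,2\beta]}G<0$) that makes $u\mapsto bu+G(0)g_\beta(u)$ nondecreasing -- both of which are accurate.
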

\begin{proof} The lower estimate is obvious since $0\leq \phi(t) \leq \phi_+(t) \leq 2\beta$ and therefore
$r(\phi)(t) \geq 0$ in view of {(\ref{Ar}) and (\ref{Br}). Now, since  $\phi(t) \leq \phi_+(t)$ }and $bu+G(0)g_{\beta}(u)$ is an increasing function, we find that 
$$
r(\phi)(t) \leq b\phi(t) + G(0)g_{\beta}(\phi(t)) \leq b\phi_+(t) + G(0)g_{\beta}(\phi_+(t)) =:R(\phi_+(t)). 
$$
Thus 
$$
({\mathcal  A}\phi)(t) \leq  \frac{1}{z_{12}}\left\{\int_{-\infty}^te^{z_1
(t-s)}R(\phi_+(s))ds + \int_t^{+\infty}e^{z_2
(t-s)}R(\phi_+(s))ds \right\}= \phi_+(t),
$$
and the lemma is proved. \qed
\end{proof}
Lemma \ref{usl} says that $\phi_+(t)$ is an {\it upper} solution for (\ref{twe2mm}), cf. \cite{wz}. Still,  we need to find a {\it lower} solution.  
Here, assuming that $c >2\sqrt{G(0)}$  and that $N_c$ has a compact support we will use the following  well known  ansatz (see e.g. \cite{wz})  
$$
\phi_-(t)= \max\{0,e^{\lambda t} (1- Me^{\epsilon t})\},
$$
where $\epsilon \in (0, \lambda)$  and $M \gg 1$  are chosen in 
such a way that $$- \chi(\lambda+\epsilon)   >(Lp/M) \int_{-\infty}^{\infty} e^{-\epsilon s} N_c(s) ds$$ (here  $L : = \sup_{t \in \R} \phi_+(t)e^{-\epsilon t}$),  $\lambda +\epsilon < \mu$, and
$$0< \phi_-(t) < \phi_+(t) < e^{\epsilon t}< 1, \quad t < T_c, \ \mbox{where} \ \phi_-(T_c) =0.$$
The above inequality $\phi_-(t) < \phi_+(t)$ is possible due to  the representation (\ref{1mo}). We also  note  that $(N_c*\phi_+)(t) \leq L e^{\epsilon t}\int_\R e^{-\epsilon s}N_c(s)ds$. 
\begin{lem}  \label{nre} Assume that {$c>2\sqrt{G(0)}$}, { $N_c$ has a  compact support}, $b>2p\beta$. Then the inequality  $\phi_-(t) \leq \phi(t) \leq \phi_+(t), \ t \in \R,$ implies that 
\begin{equation}\label{ul2}
\phi_-(t) \leq ({\mathcal  A}\phi)(t) \leq \phi_+(t), \quad t \in \R. 
\end{equation}
\end{lem}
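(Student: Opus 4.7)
The upper bound $({\mathcal A}\phi)(t)\leq \phi_+(t)$ is already established in Lemma~\ref{usl}, so only the lower bound $({\mathcal A}\phi)(t)\geq \phi_-(t)$ requires a new argument. Since $\phi_-(t)=\max\{0,\tilde\phi(t)\}$ with $\tilde\phi(t):=e^{\lambda t}-Me^{(\lambda+\epsilon)t}$, it suffices to verify separately that $({\mathcal A}\phi)(t)\geq 0$ and $({\mathcal A}\phi)(t)\geq \tilde\phi(t)$. The first inequality is immediate: by (\ref{Ar})--(\ref{Br}) one has $r(\phi)(s)\geq 0$ (the hypothesis $b>2p\beta$ implies $b>G(0)-\min_{[0,2\beta]}G$, which is what those estimates require), and the Green kernel of ${\mathcal A}$ is positive.

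For the second inequality, introduce the auxiliary source
\[
\tilde R(s):=(b+G(0))\tilde\phi(s)-M|\chi(\lambda+\epsilon)|\,e^{(\lambda+\epsilon)s}.
\]
Using the identities $\chi(\lambda)=0$ and $\chi(\lambda+\epsilon)<0$, a direct calculation shows that $\tilde\phi$ solves $\tilde\phi''-c\tilde\phi'-b\tilde\phi=-\tilde R$ on all of $\R$. Since $z_1<0<\lambda<\lambda+\epsilon<\mu<z_2$, the Green-function integrals defining ${\mathcal A}$ converge when evaluated on $e^{\lambda s}$ and on $e^{(\lambda+\epsilon)s}$, and an elementary computation yields ${\mathcal A}(\tilde R)=\tilde\phi$. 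By linearity, ${\mathcal A}\phi-\tilde\phi={\mathcal A}(r(\phi)-\tilde R)$, so by positivity of the kernel it is enough to prove the pointwise inequality $r(\phi)(s)\geq \tilde R(s)$ for every $s\in\R$.

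For $s\geq T_c$ one has $\tilde\phi(s)\leq 0$, hence $\tilde R(s)<0\leq r(\phi)(s)$. For $s<T_c$, the hypotheses give $\phi(s)\leq \phi_+(s)<e^{\epsilon s}<1<\beta$, so $g_\beta(\phi(s))=\phi(s)$; combining the sub-tangency $G(u)\geq G(0)-pu$ on $[0,2\beta]$ with $\phi\geq \tilde\phi$ and $b+G(0)>0$ leads to
\[
r(\phi)(s)\geq (b+G(0))\tilde\phi(s)-p\,\phi_+(s)(N_c*\phi_+)(s).
\]
Taking $M$ large enough that $T_c=-\epsilon^{-1}\ln M <\phi_+^{-1}(\beta)$, the sharp leading-edge representation (\ref{1mo}) gives $\phi_+(s)\leq e^{\lambda s}$ for such $s$, while the global bound $\phi_+(u)\leq Le^{\epsilon u}$ together with the compact support of $N_c$ yields $(N_c*\phi_+)(s)\leq LI_0\, e^{\epsilon s}$, where $I_0:=\int_{\R}e^{-\epsilon\sigma}N_c(\sigma)d\sigma<\infty$. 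Consequently $r(\phi)(s)\geq (b+G(0))\tilde\phi(s)-pLI_0\, e^{(\lambda+\epsilon)s}$, and the defining inequality $-\chi(\lambda+\epsilon)>LpI_0/M$ rearranges precisely to $pLI_0\leq M|\chi(\lambda+\epsilon)|$, producing $r(\phi)(s)\geq \tilde R(s)$.

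The delicate step---and the main obstacle---is matching the exponential rates. A uniform bound $\phi_+\leq Le^{\epsilon\cdot}$ applied to both factors of $\phi(s)(N_c*\phi)(s)$ would produce a residual of order $e^{2\epsilon s}$, which has the wrong decay when compared with $e^{(\lambda+\epsilon)s}$ as $s\to-\infty$ and cannot be absorbed into $\tilde R$. The resolution is to apply the sharper asymptotics $\phi_+(s)\leq e^{\lambda s}$ from (\ref{1mo}) to only one factor of this product; this is available only for $s<\phi_+^{-1}(\beta)$, which forces one to push $T_c$ arbitrarily far to $-\infty$ by enlarging $M$ before invoking the smallness condition imposed on $\chi(\lambda+\epsilon)$.
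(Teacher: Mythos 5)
Your proof is correct, and it follows the paper's overall strategy (a lower-solution comparison through the Green representation, the sub-tangency bound $G\ge G_\#=G(0)-pu$, the convolution estimate $(N_c*\phi_+)(s)\le LI_0e^{\epsilon s}$, and the defining smallness condition $-\chi(\lambda+\epsilon)>LpI_0/M$), but the technical execution of the key step is genuinely different. The paper keeps the truncated function $\phi_-$ itself: it proves the differential inequality $\phi_-''-c\phi_-'-b\phi_-+\Gamma>0$ on $(-\infty,T_c)$ with $\Gamma=\phi_-\left[b+G_\#((N_c*\phi_+))\right]$, and then converts this into the integral inequality $Q\ge\phi_-$ by rewriting the inequality in integral form (citing an external lemma) and using the positive jump $\phi_-'(T_c^+)-\phi_-'(T_c^-)>0$ at the kink. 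You instead work with the smooth extension $\tilde\phi(t)=e^{\lambda t}-Me^{(\lambda+\epsilon)t}$ on all of $\R$, compute its exact Green preimage $\tilde R$ (your identity checks out: since $\chi(\lambda)=0$ and $z^2-cz-b=\chi(z)-G(0)-b$, and $z_1<0<\lambda<\lambda+\epsilon<\mu<z_2$ guarantees convergence, the kernel maps $\tilde R$ back to $\tilde\phi$), and reduce everything to the single global pointwise comparison $r(\phi)\ge\tilde R$, which is trivial for $s\ge T_c$ because $\tilde R<0\le r(\phi)$ there; the truncation $\phi_-=\max\{0,\tilde\phi\}$ is absorbed by the separate bound $({\mathcal A}\phi)(t)\ge 0$. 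This buys a self-contained argument with no weak/distributional step and no derivative-jump condition. The price is the extra, harmless, largeness requirement $T_c<\phi_+^{-1}(\beta)$ on $M$: you bound the quadratic term by $\phi_+(s)(N_c*\phi_+)(s)$ and therefore must invoke $(\ref{1mo})$ to get $\phi_+(s)\le e^{\lambda s}$, whereas the paper arranges the estimate around $\phi_-(s)\le e^{\lambda s}$, which holds automatically on $(-\infty,T_c]$. Your closing diagnosis of the rate-matching obstruction ($e^{2\epsilon s}$ versus $e^{(\lambda+\epsilon)s}$ as $s\to-\infty$) is exactly the point on which both arguments turn.
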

\begin{proof} 
Due to Lemma \ref{usl}, it suffices to prove the first inequality in (\ref{ul2}) for $t \leq T_c$. 
Since $0< \phi(t) < 1 < \beta, \ t \leq T_c$, we have, for $t\leq T_c $, that 
$$
({\mathcal A}\phi)(t) \geq   \frac{1}{z_{12}}\left\{\int_{-\infty}^te^{z_1
(t-s)}r(\phi)(s)ds + \int_t^{T_c}e^{z_2
(t-s)}r(\phi)(s)ds \right\}
$$
$$
=\frac{1}{z_{12}}\left\{\int_{-\infty}^te^{z_1(t-s)} \phi(s)[b +{G((N_c*\phi)(t))}] ds + 
\int_t^{T_c}e^{z_2(t-s)}\phi(s)[b +{G((N_c*\phi)(t))}] ds \right\}
$$
$$ \geq \frac{1}{z_{12}}\left\{\int_{-\infty}^te^{z_1
(t-s)} \Gamma(s) ds + \int_t^{T_c}e^{z_2
(t-s)}\Gamma(s) ds \right\} 
$$
$$ =\frac{1}{z_{12}}\left\{\int_{-\infty}^te^{z_1
(t-s)} \Gamma(s) ds + \int_t^{+\infty}e^{z_2
(t-s)}\Gamma(s) ds \right\} =: Q(t),
$$
where $\Gamma(s): = \phi_-(s)[b +{G_\#((N_c*\phi_+)(t))}]$. 
To  estimate  $Q(t)$, we  find, for  $t \leq T_c$, that
$$
\phi_-''(t) - c\phi_-'(t)  -b\phi_-(t) + b\phi_-(t) + \phi_-(t){G_\#((N_c*\phi_+)(t))} 
$$
$$
=- \chi(\lambda+\epsilon) M e^{(\lambda +\epsilon)t} - {[G(0)-G_\#((N_c*\phi_+)(t))]}e^{\lambda t} (1- Me^{\epsilon t}) 
$$
$$
\geq- \chi(\lambda+\epsilon) M e^{(\lambda +\epsilon)t} - e^{\epsilon t }e^{\lambda t} Lp { \int_{-\infty}^{\infty} e^{-\epsilon s} N_c(s) ds}$$
$$ = M e^{(\lambda +\epsilon)t}\left(
- \chi(\lambda+\epsilon)   - {\frac{Lp}{M} \int_{-\infty}^{\infty} e^{-\epsilon s} N_c(s) ds}\right) >0. 
$$
But then, rewriting the latter differential inequality in the equivalent  integral form  (see e.g. \cite[Lemma 18]{TPT}) and using  the fact that 
$$ \phi'_-(T_c^+) - \phi'_-(T_c^-) = - \phi'_-(T_c^-) >0, $$
we can conclude that $Q(t) \geq \phi_-(t), \ t \in \R$.  Hence, $({\mathcal  A}\phi)(t) \geq  \phi_-(t), \ t \in \R$. \qed
\end{proof}

Next,  with some $\rho >0$,  we will consider the  Banach
space
\[
C_{\frak{m}} = \{y \in C(\R, \R): |y|_{\frak{m}}: = \sup_{s \leq
0}e^{-0.5\lambda s} |y(s)|+  \sup_{s \geq
0}e^{-\rho s} |y(s)| < + \infty  \},
\]
\[
C^1_{\frak{m}} = \{y \in C_{\frak{m}}: y' \in C_{\frak{m}}, \
|y|_{1,\frak{m}}: =  |y|_{\frak{m}} +  |y' |_{\frak{m}} < +\infty
\}. \]
In order to establish the existence of semi-wavefronts to equation (\ref{twe2mm}), it suffices to prove that 
the equation ${\mathcal  A}\phi=\phi$ has at least one  solution  from the set 
$$\frak{K} = \{x \in C_{\frak{m}}: \phi_-(t) \leq x(t) \leq  \phi_+(t), \ t \in \R\}.$$  Note that $\phi_+(t)e^{-\rho t} = O(e^{-\rho t})$ at $+\infty$ and 
$\phi_+(t)e^{-\lambda t/2} = O(|t|e^{\lambda t/2})$ at $-\infty$,  so that the norm $|\phi_+|_{\frak{m}}$ is finite.  Since $0 \leq x(t) \leq  \phi_+(t)$ implies 
$|x|_{\frak{m}}\leq |\phi_+|_{\frak{m}}$, the set $\frak{K}$ is bounded and non-empty. 
Observe also that 
the convergence $x_n \to x$ in $\frak{K}$ is equivalent to the
uniform convergence  on compact subsets of
$\R$.
\begin{lem} \label{ple} Let $c >2\sqrt{G(0)}$. Then $\frak{K}$ is a non-empty, closed, bounded and convex subset of $C_{\frak{m}}$
and ${\mathcal  A}:\frak{K} \to \frak{K}$ is completely continuous. As a consequence, the integral equation
${\mathcal  A}\phi=\phi$  has at least one  positive bounded solution in $\frak{K}$. 
\end{lem}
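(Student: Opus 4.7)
The plan is to apply Schauder's fixed point theorem to ${\mathcal A}$ on $\frak{K}$.

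First, I would verify the four structural properties of $\frak{K}$. Non-emptiness holds because $\phi_+ \in \frak{K}$: the asymptotics $\phi_+(t)= O(e^{\lambda t})$ as $t \to -\infty$ paired with the weight $e^{-0.5\lambda s}$ make the $s\leq 0$ part of $|\phi_+|_{\frak{m}}$ finite, while $\phi_+ \leq 2\beta$ paired with any $\rho>0$ controls the $s\geq 0$ part. Convexity is immediate because the defining constraints $\phi_-(t)\leq x(t) \leq \phi_+(t)$ are pointwise linear. Boundedness follows from $|x|_{\frak{m}}\leq |\phi_+|_{\frak{m}}$ for all $x \in \frak{K}$. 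For closedness, the already-noted fact that convergence in $|\cdot|_{\frak{m}}$ on the bounded set $\frak{K}$ is equivalent to uniform convergence on compact subsets of $\R$ preserves the pointwise inequalities defining $\frak{K}$. The invariance ${\mathcal A}(\frak{K})\subset \frak{K}$ is precisely Lemma \ref{nre}, provided $b$ is chosen to satisfy both requirements $b>G(0)-\min_{[0,2\beta]}G(u)$ and $b>2p\beta$ coming from Lemmas \ref{usl} and \ref{nre}.

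The main technical step is the complete continuity of ${\mathcal A}:\frak{K}\to\frak{K}$. For continuity, if $x_n \to x$ in $C_{\frak{m}}$ with $x_n \in \frak{K}$, then $x_n \to x$ uniformly on compacts; continuity of $r(\cdot)$ together with the uniform bound $|r(x_n)(s)| \leq 2\beta(b+G(0))$ from (\ref{Ac}) and dominated convergence yield pointwise convergence $({\mathcal A}x_n)(t) \to ({\mathcal A}x)(t)$. For relative compactness of ${\mathcal A}(\frak{K})$, I would differentiate under the kernel to obtain a uniform bound on $(\mathcal{A}\phi)'$ over $\phi\in\frak{K}$, again via (\ref{Ac}); this provides local equicontinuity and local uniform boundedness, so Arzel\`a--Ascoli plus a diagonal argument extracts a subsequence converging uniformly on compact subsets of $\R$. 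In both continuity and compactness arguments, the uniform envelope $0 \leq {\mathcal A}\phi \leq \phi_+$ supplied by Lemma \ref{usl} permits one to dominate the two weighted tails $\sup_{s\leq -R}e^{-0.5\lambda s}|\cdot|$ and $\sup_{s\geq R}e^{-\rho s}|\cdot|$ by the corresponding tails of $|\phi_+|_{\frak{m}}$, which can be made uniformly small in $\phi \in \frak{K}$ by taking $R$ large. This is the device that upgrades locally uniform convergence to convergence in the weighted norm $|\cdot|_{\frak{m}}$.

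Schauder's fixed point theorem then yields $\phi={\mathcal A}\phi \in \frak{K}$; since $\phi\geq \phi_-\not\equiv 0$ the solution is positive, and the standard bootstrapping from the integral representation shows that $\phi\in C^2(\R)$ and solves (\ref{twe2mm}). The main obstacle I foresee is precisely the tail-control step that transfers locally uniform convergence into the weighted norm; its success rests on the exponential envelope $\phi_+$ being compatible with the chosen weight exponents, namely $0.5\lambda$ at $-\infty$ (strictly below the decay rate $\lambda$ of $\phi_+$) and any positive $\rho$ at $+\infty$ (since $\phi_+$ is bounded there).
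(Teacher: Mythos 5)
Your proposal is correct and follows essentially the same route as the paper: the paper simply cites the proof of Lemma 11 in \cite{HK} for the properties of $\frak{K}$ and the complete continuity of ${\mathcal A}$, and then invokes Schauder's fixed point theorem. You have supplied the standard details that the paper delegates to that reference (invariance via Lemmas \ref{usl} and \ref{nre}, tail control by the envelope $\phi_+$ to pass from locally uniform convergence to convergence in $|\cdot|_{\frak{m}}$, and Arzel\`a--Ascoli for compactness), and these are sound.
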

\begin{proof} For the above mentioned properties of ${\mathcal  A}:\frak{K} \to \frak{K}$ see, for example, the proof of Lemma 11 in \cite{HK}. Then the existence of at least one solution 
$\phi \in \frak{K}$ to the equation ${\mathcal  A}\phi=\phi$  is  an immediate  consequence  of the 
 Schauder fixed point theorem.   \qed  \end{proof}

%%%%%%%%%%%%%%%%%%%%%%%%%%%%%%%%%%%%%%%

\subsection{Proof of Theorem \ref{Te1} in the general case}

\vspace{1mm}
{In what follows,  $C_b:=C_b(\mathbb{R},\mathbb{R}^N)$ will denote the space of all continuous
and bounded functions from $\mathbb{R}$ to $\mathbb{R}^N$, with the
supremum norm $|y|_{\infty}=\sup _{s\in \mathbb{R}}|y(s)|$. }

\begin{thm} \label{34} Assume that  $c\geq 2\sqrt{G^*}$. Then the integral equation
${\mathcal  A}\phi=\phi$  has at least one  positive bounded solution in $\frak{K}$.
\end{thm}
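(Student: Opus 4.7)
My plan is to adapt the Schauder scheme of Section 3.1 to general $G$ by upgrading the upper solution and, in the Allee case, replacing the lower-solution ingredient of Lemma \ref{nre} with a fixed-point-index argument that dispenses with a non-trivial lower solution. Take $\phi_+$ to be the monotone traveling wavefront of the auxiliary KPP--Fisher equation $v''-cv'+G^*g_\beta(v)=0$ at speed $c\geq 2\sqrt{G^*}$, with the classical asymptotics $\phi_+(t)\sim (-t)^j e^{\lambda^*(c)t}$ at $-\infty$, where $\lambda^*(c):=(c-\sqrt{c^2-4G^*})/2$ and $j\in\{0,1\}$ (with $j=1$ at the critical speed $c=2\sqrt{G^*}$). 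Enlarge the parameter $b$ in $\mathcal{A}$ to $b>G^*+\sup_{[0,2\beta]}|G|$, so that $u\mapsto bu+G^*g_\beta(u)$ is nondecreasing on $[0,2\beta]$ and $r(\phi)\geq 0$ on $\phi\in[0,2\beta]$. The estimate of Lemma \ref{usl}, with $G(v)\leq G^*$ replacing $G(v)\leq G(0)$, yields $\mathcal{A}\phi\leq\phi_+$ whenever $0\leq\phi\leq\phi_+$, so that $\mathcal{A}$ is a completely continuous self-map of $\frak{K}:=\{x\in C_\frak{m}:0\leq x\leq\phi_+\}$ (cf. Lemma \ref{ple}).

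The main obstacle is to exclude the trivial fixed point $\phi\equiv 0\in\frak{K}$. The natural exponential lower solution of Lemma \ref{nre}, namely $\phi_-(t)=e^{\lambda(c)t}(1-Me^{\epsilon t})^+$ with $\lambda(c):=(c-\sqrt{c^2-4G(0)})/2$, is unavailable in the Allee case: $G^*>G(0)$ forces $\lambda(c)<\lambda^*(c)$, so $\phi_-$ decays strictly slower than $\phi_+$ at $-\infty$ and the inequality $\phi_-\leq\phi_+$ fails near $-\infty$. No exponential ansatz repairs this, since every positive subsolution of the linearization $\psi''-c\psi'+G(0)\psi\geq 0$ at $-\infty$ decays at most like $e^{\lambda(c)t}$.

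I would secure non-triviality via a fixed-point-index argument on the cone $K:=\{\phi\in C_\frak{m}:\phi\geq 0\}$. The Fr\'echet derivative of $\mathcal{A}$ at $0$ is the compact positive linear operator
\[
\mathcal{A}'(0)\psi(t)=\frac{b+G(0)}{z_{12}}\left(\int_{-\infty}^t e^{z_1(t-s)}\psi(s)\,ds+\int_t^{+\infty}e^{z_2(t-s)}\psi(s)\,ds\right),
\]
which on an exponential $\psi(t)=e^{\Lambda t}$, $\Lambda\in(z_1,z_2)$, acts as multiplication by $(b+G(0))/(b+c\Lambda-\Lambda^2)$; this multiplier equals $1$ exactly at $\Lambda=\lambda(c),\mu(c)$ and strictly exceeds $1$ for $\Lambda\in(\mu(c),z_2)$. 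Choosing the growth weight $\rho$ in the norm of $C_\frak{m}$ so that $\rho>\mu(c)$ places such exponentials in $K$; a Krein--Rutman argument then shows that the spectral radius of $\mathcal{A}'(0)|_K$ is strictly greater than $1$ and is attained on a positive eigenvector. Standard fixed-point-index theory for compact maps on cones then gives $\operatorname{ind}(\mathcal{A},B_r(0)\cap K,K)=0$ for small $r>0$, while $\operatorname{ind}(\mathcal{A},\frak{K},K)=1$ from $\mathcal{A}\frak{K}\subset\frak{K}$; their difference produces the desired positive bounded fixed point in $\frak{K}\setminus\overline{B_r(0)}$. The principal difficulty is the Krein--Rutman spectral analysis of $\mathcal{A}'(0)$ on the weighted cone $K$, in particular the verification that the choice $\rho>\mu(c)$ is compatible with the rest of the setup while the relevant eigenfunctions $e^{\Lambda t}$ with $\Lambda\in(\mu(c),\rho)$ lie in $C_\frak{m}$.
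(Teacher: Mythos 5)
Your diagnosis of where the sub/super-solution scheme breaks down in the Allee case is exactly right: when $G(0)<G^*$ one has $\lambda(c)<\lambda^*(c)$, so the exponential lower solution built from the linearization at $0$ cannot sit below an upper solution decaying like $e^{\lambda^*(c)t}$, and no exponential ansatz repairs this. However, the fixed-point-index argument you propose to replace it has a genuine gap precisely at its key step, the claim that $\operatorname{ind}(\mathcal{A},B_r(0)\cap K,K)=0$. The standard index-zero lemmas (Amann, Dancer) require that the linearization $\mathcal{A}'(0)$ have \emph{no} eigenvector in the cone with eigenvalue $1$, in addition to possessing one with eigenvalue $>1$. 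By your own multiplier computation, the multiplier $(b+G(0))/(b+c\Lambda-\Lambda^2)$ equals $1$ exactly at $\Lambda=\lambda(c)$ and $\Lambda=\mu(c)$, and once you take $\rho>\mu(c)$ both positive functions $e^{\lambda(c)t}$ and $e^{\mu(c)t}$ lie in $C_{\frak{m}}\cap K$. So the cone contains positive eigenvalue-one eigenfunctions of $\mathcal{A}'(0)$, and the hypothesis of the index-zero theorem fails; this resonance is not a technicality but the essential degeneracy of the monostable problem at the trivial state. A second problem is that $\mathcal{A}'(0)$ is a convolution operator whose essential spectrum on the weighted space fills out the closure of the multiplier range over $\Lambda\in[\lambda(c)/2,\rho]$, an interval containing both the value $1$ and values exceeding $1$; it is not compact on $C_{\frak{m}}$, so Krein--Rutman does not produce a principal eigenvector attaining the spectral radius. (Compactness of $\mathcal{A}$ on $\frak{K}$ uses the a priori bound $0\le\phi\le\phi_+$, which is unavailable on all of $B_r(0)\cap K$.) Finally, even if the index computation were repaired, your scheme does not address the critical speed $c=2\sqrt{G^*}$ (where the upper solution loses the representation (\ref{1mo})) nor kernels without compact support.

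The paper takes an entirely different route around the Allee obstruction: it never applies sub/super solutions to the Allee nonlinearity at all. Instead it replaces $G$ by approximants $G_j$ with $G_j(0)=G^*+1/j$ that coincide with $G$ on $[1/j,+\infty)$ and are linear on $[0,1/j]$; each $G_j$ has no Allee effect, so $\lambda=\lambda^*$ for it and the Section~3.1 scheme applies. The resulting profiles are normalized by $\phi_j(0)=1/2=\max_{s\le 0}\phi_j(s)$, and it is this normalization -- not an index computation -- that excludes collapse to the trivial solution in the compactness limit (together with Lemma~\ref{bebe} to identify $\phi_0(-\infty)=0$). Separate limiting arguments of the same flavour handle $c=2\sqrt{G^*}$ (via $c_j\downarrow c$) and general kernels (via compactly supported $N_j\to N_c$, using the uniform bound of Lemma~\ref{ogran}). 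If you want to pursue a degree-theoretic proof, the workable version truncates to a bounded interval, where the operator is genuinely compact and a normalization constraint can be built into the homotopy, as in \cite{BNPR,HWF}; the index-at-the-origin shortcut on the whole line does not go through.
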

\begin{proof}  Assume first that $K$ (hence, $N_c(s)$ for each $c>0$) has a  compact support and $G(0) = G^* > G(u)$ for $u >0$. 
If $c>2\sqrt{G(0)}$ then  the assertion of the theorem follows from Lemma \ref{ple}. 

It remains to analyse the case when  $c=2\sqrt{G(0)}.$ Consider the sequence $c_j:= c+1/j$. Since $c_j >2\sqrt{G(0)}$,   there exists a semi-wavefront $\phi_j$ of equation (\ref{twe2mm}) for each $j$, which  we can 
normalise by the condition $\phi_j(0)= 1/2 = \max_{s \leq 0} \phi_j(s)$. It is easy to see that the set 
$\{\phi_j, j \geq 0\}$ is precompact in the compact-open topology of {$C_b(\R,\R)$}  and therefore we can also assume that $\phi_j \to \phi_0$ uniformly on compact subsets of $\R$, where $\phi_0 \in C_b(\R,\R)$ and 
$\phi_0(0) =1/2= \max_{s \leq 0} \phi_0(s)$.    In addition, 
$R_j(s):= r(\phi_j)(s) \to R_0(s):= r(\phi_0)(s)$ for each fixed $s \in \R$. The sequence 
$\{R_j(t)\}$ is also uniformly bounded on $\R$, see (\ref{Ac}). All this allows us to apply Lebesgue's dominated convergence theorem in  
\begin{equation}\label{RLi}
\frac{1}{\epsilon_j'}\left\{\int_{-\infty}^te^{z_{1,j}
(t-s)}R_j(s)ds + \int_t^{+\infty}e^{z_{2,j}
(t-s)}R_j(s)ds \right\} = \phi_j(t), 
\end{equation}
where $z_{1,j} <0< z_{2,j}$ satisfy $z^2-c_jz -b =0$ and $\epsilon_j':= z_{2,j}- z_{1,j}$. 
Taking the limit in (\ref{RLi}), we obtain that ${ \mathcal  A}\phi_0 = \phi_0$ with $c=2\sqrt{G(0)}$ and therefore $\phi_0$ is a non-negative solution 
of equation (\ref{twe2an}) satisfying the condition $\phi_0(0)=1/2= \max_{s \leq 0} \phi_0(s)$.  Lemma \ref{po} shows 
that actually $\phi_0(t)>0$ for all $t\in \R$.  We claim, in addition, that $\inf_{s \leq 0}\phi_0(s) =0$ and therefore 
$\phi_0(-\infty)=0$ in view of  Lemma  \ref{bebe}. Indeed, otherwise there exists a positive $k_0$ such that 
$k_0\leq  \phi_0(t) \leq 1/2$ for all $t \leq 0$. This implies immediately 
that 
$$0< 0.5k_0\min_{u \in [k_0,1/2]}{G(u)} \leq  \phi_0(t)G((N_c*\phi_0)(t)) \leq \max_{u \in [k_0,1/2]}{G(u)}$$ for all sufficiently large negative $t$ (say, for $t \leq t_0$).  But then 
$$
\phi_0'(t) = \phi_0'(t_0) + c(\phi_0(t)-\phi_0(t_0)) +\int^{t_0}_t\phi_0(s)G((N_c*\phi_0)(s))ds \to +\infty \ \mbox{as} \ t \to -\infty, 
$$
contradicting the positivity of $\phi_0(t)$.  In consequence,  $\phi_0$ is a semi-wavefront for $c=2\sqrt{G(0)}.$

Next, consider the case when  $K$ has a  compact support with  $K(0,0)>0$ (hence,  $N_c(s)$ has a  compact support with  $N_c(0)>0$ for each $c>0$) and when $G(0) <  G^*$, $c > 2\sqrt{G^*}$. For each $j \geq 2$, we define a
continuous function $G_j:\R_+ \to \R$ with $G_j(0)= G^*+1/j$  which  coincides with $G(u)$ on the interval $[1/j,+\infty)$ and is linear on $[0,1/j]$. 
Clearly, each $G_j$ satisfies all conditions of the previous subsection and for every positive $A$ there exists integer $j_0$ such that $G_j(u)=G(u)$ for all $u \geq A, \ j \geq j_0$. 
Again, we  know that for each large $j$ there exists a semi-wavefront $\phi_j$ of the equation 
\begin{equation}
\label{twe2mmj} 
\phi''(t) - c\phi'(t)  - b \phi(t) + r_j(\phi)(t)=0,
\end{equation}
where 
 $$r_j(\phi)(t):= b\phi(t) + g_{\beta}(\phi(t)){G_j((N_{c}*\phi)(t))}.$$  
We will 
normalise $\phi_j$ by the condition $\phi_j(0)= 1/2 = \max_{s \leq 0} \phi_j(s)$. It is easy to see that the set 
$\{\phi_j, j \geq 0\}$ is precompact in the compact-open topology of $C_b(\R,\R)$  and therefore we can also assume that $\phi_j \to \phi_*$ uniformly on compact subsets of $\R$, where 
$\phi_*(0) =1/2= \max_{s \leq 0} \phi_0(s)$.    In addition, 
$R_j(s):= r_j(\phi_j)(s) \to R_*(s):= r(\phi_*)(s)$ for each fixed $s \in \R$. Indeed,  suppose that   $(N_c*\phi_* )(s)> 0$ for some $s \in \R$, then $(N_c*\phi_j)(s)> 0$ for all large $j$  so that  $G_j((N_c*\phi_j)(s))= G((N_c*\phi_j)(s)) $ if $j$ is sufficiently large. In consequence,  $\lim_{j \to +\infty} R_j(s) = R_*(s)$. On the other hand, if $(N_c*\phi_*)(s)= 0$ then necessarily $\phi_*(s) =0$ (recall that $N_c(0)>0$) and therefore $\phi_j(s) \to 0$ as $j \to +\infty$.  Thus 
$$
\lim_{j \to + \infty}\left[b\phi_j(s) + \phi_j(s){G_j((N_c*\phi_j)(t))}\right] = b\phi_*(s) = b\phi_*(s) + \phi_*(s){G_*((N_c\phi_*)(s))}. 
$$

The sequence 
$\{R_j(t)\}$ is also uniformly bounded on $\R$. All this allows us to apply Lebesgue's dominated convergence theorem in (\ref{RLi}) and conclude 
 that ${ \mathcal  A}\phi_* = \phi_*$  and therefore {$\phi_*$} is a non-negative solution 
of equation (\ref{twe2an}) satisfying the condition ${\phi_*(0)}=1/2= \max_{s \leq 0}{\phi_*(s)}$. Arguing as above,  we conclude that  {$\phi_*$} is a semi-wavefront propagating 
with the speed $c > 2\sqrt{G^*}$. The limiting case when $c= 2\sqrt{G^*}$,  $G(0) <  G^*$ and 
  $K$ has a  compact support with  $K(0,0)>0$, 
 can be analyzed in the same way as it was done in the second paragraph of this proof.

Finally, in order to prove the theorem for general kernels, we can use a similar limit argument by constructing a sequence of compactly supported  kernels $N_j$ converging monotonically to $N_c$.  Indeed, set  $$N_j (s) = \left[N_c(s)+  (2j)^{-1}\left(\int_{-\infty}^{-j} N_c(s) ds + \int_j^{\infty} N_c(s) ds\right)\right]\left(1-\frac1 j\right) + \frac 1 j $$
  for $s \in [-j,j]$, and set $N_j (s) = 0 $ otherwise.  
Clearly, $N_j (0)>0$. Therefore,  as we have already proved,  for each  fixed $c \geq 2\sqrt{G^*}$ and $N_j$   there exists  a semi-wavefront  $\phi_j$ propagating with the velocity $c$ and satisfying the condition $\phi_j(0)= 1/2 = \max_{s \leq 0} \phi_j(s)$. 
Due to Lemma \ref{ogran}, $0< \phi_j(t) \leq U(c,N_j)$ for all $t \in \R$. 
By using the explicit form of $U(c,N_j)$ given in Lemma \ref{ogran}, it is easy to show that the sequence 
$\{\phi_j(t)\}$ is uniformly bounded on $\R$.  The sequence 
$\{\phi'_j(t)\}$ is uniformly bounded on $\R$ as well, so  we can assume that 
 $\phi_j \to \phi_0 \in C_b(\R,\R)$ uniformly on compact subsets of $\R$. But then also 
$\phi_0(0) =1/2= \max_{s \leq 0} \phi_0(s)$ so that, arguing as in the first part of our proof, we conclude that $\phi_0(x+ct)$ must be a semi-wavefront for equation (\ref{twe2an}) with a general kernel. 
\qed
\end{proof}

%%%%%%%%%%%%%%%%%%%%%%%%%%%%%%%%%%%%%%%%%%%%%%%%%%%%%%%%%%%%%%=

\section{Monotone wavefronts: the uniqueness}

We will assume {in the whole section that   $c \geq 2\sqrt{G(0)}$, all considered  wavefronts are monotone and that   there exists a finite derivative 
$G'(1) <0$. 
Then the function 
$H(u): = G(u)/(u-1), \ u \not=1,\ H(1) = G'(1)$, is well defined and  continuous.  Set $H_*= \min_{u\in [0,1]}H(u)$, $H^*= \max_{u\in [0,1]}H(u)$, clearly,  $H_* \leq H^*<0$. 
Furthermore, the kernel $K:\R_+\times\R \to \R_+$ will  satisfy
$$
\frak{I}_{c}(z):= \int_{\R_+\times\R}  K(s,y)e^{-z(cs+y)}dsdy \in \R, \ \  \frak{I}_{c}(0)=1, \ \lim_{z\to- \lambda_0(c)^+}\frak{I}_{c}(z)=+\infty, 
$$
for all $z \in (-\lambda_0(c), \lambda_1(c)), \ c >0$, and some  $\lambda_0(c), \lambda_1(c) \in (0,+\infty]$.}

 Again,  we will assume that 
$\frak{I}_{c}(\lambda)$ is a scalar continuous function of variables  $c, \lambda$.  Next, consider the characteristic function {of equation (\ref{twe2an}) linearized around the positive equilibrium}
$$\chi_+(z,c) =z^2-cz+G'(1)\int_{\R}e^{-z{s}}N_c(s)ds = z^2-cz+G'(1)\frak{I}_c(z).$$
Observe that $\chi_+(0,c) <0,$ $\chi_+(-\lambda_0(c)^+,c) =-\infty,$ and  $\chi^{(4)}_+(z,c) <0, \ z \in  (-\lambda_0(c), \lambda_1(c))$, so that  the  function  
$\chi_+(z,c)$ has at  most four negative zeros. 
\subsection{About the existence of negative zeros of  $\chi_+(z,c)$}\label{S22}
Suppose first that  supp\,$N_c$  belongs to $(-\infty,0]$.  Then the characteristic function $\chi_+$ takes the form  
$$
\chi_+(z,c) = z^2-cz +G'(1) \int_{-\infty}^0 N_c(v)e^{-z v}dv,
$$
so that $
\chi_+(0,c)<0$ and $\chi_+(-\infty,c)=+\infty$. In consequence, $\chi_+(z,c)$ has at least one negative zero if  supp\,$N_c \subset (-\infty,0]$.  

Next, suppose that  supp\,$N_c \cap (0, +\infty) \not=\emptyset$. Let  $\phi(t)$ be a positive monotone wavefront. Then there exists  $r_0 >0$ such that for each $s \in \R$ and 
positive $\nu = \int^{+\infty}_{r_0}N_c(r)dr \in (0,1)$, it holds
\begin{eqnarray}
\label{1N}
& & 1-(N_c*\phi)(s) = \int_\R(1-\phi(s-r))N_c(r)dr   \nonumber \\
& & \geq \int^{+\infty}_{r_0}(1-\phi(s-r))N_c(r)dr\geq \nu (1-\phi(s-r_0)).
\end{eqnarray}
Our subsequent analysis is inspired by the arguments proposed in \cite{FZ} and \cite{HT},  we present them here for the sake of completeness. 
Set $y(t)=1-\phi(t)$ {in (\ref{twe2an})}. Then 
\begin{equation}\label{efy}
y''(t)-cy'(t) + G(0)y(t) = G(0)y(t) - \left[\phi(t)H((N_c*\phi)(t))\right](N_c*y)(t), \quad t \in \R, 
\end{equation}
so that (cf.  \cite{FZ,GT})
$$
y(t) = \int_0^{+\infty}A(s)\left( G(0)y(t+s) - \left[\phi(t+s)H((N_c*\phi)(t+s))\right](N_c*y)(t+s)\right)ds,
$$
where $$ A(s) = \left\{
\begin{array}{ccc}
a'(e^{-\lambda(c) s}- e^{-\mu(c) s}),& s \geq 0\  \mbox{and}\ c >2\sqrt{G(0)},\\
a'se^{- cs/2},& s\geq 0\  \mbox{and}\ c =2\sqrt{G(0)},\\
0 ,& s \leq 0, \ c \geq 2\sqrt{G(0)}, 
\end{array} \right. $$
with positive $a'$  chosen to assure the normalization condition $\int_\R A(s)ds =1/G(0)$.

Set $\nu^* = \phi(0)|H^*|\nu, \ \hat \nu =  \nu^*\int_0^{r_0/2}A(s)ds$.  In view of (\ref{1N}), this implies that, for all $t \geq 0$,
$$
y(t) \geq  \phi(0)|H^*|\int_0^{+\infty}A(s)\nu y(t+s-r_0)ds  \geq \nu^* \int_0^{r_0/2}A(s) y(t+s-r_0)ds\geq \hat \nu y(t-r_0/2).
$$
Therefore, for some $C>0$ and $\sigma= 2r_0^{-1}\ln \hat \nu <0$,  
\begin{equation}\label{rha}
y(t) \geq Ce^{\sigma t}, \quad t \geq 0. 
\end{equation}
Hence, 
\begin{equation}\label{si}
0 \geq \sigma_* = \liminf_{t\to +\infty} \frac 1 t \ln y(t) \geq \sigma.  
\end{equation}
Suppose, on the contrary, that $\chi_+(z,c)$ does not have  negative zeros. Then $\chi_+(\sigma_*,c)<0$ (we admit here the situation when $\chi_+(\sigma_*,c)=-\infty$), so that there exist a large $n_0>0$ and small $\delta, \rho >0$ such that 
$$
q:=\inf_{x \in (\sigma_*-\delta, \sigma_*+\delta)} \frac{G(0) -  (1-\rho)\left(\max_{u\in [1-\rho,1]}H(u)\right)\int_{-n_0}^{n_0} N_c(v)e^{-{x}  v}dv}{x^2-cx +G(0)}  >1.
$$
Next, let $\mu_* \in  (\sigma_*-\delta, \sigma_*)$ be such that 
$$\mu_*+ n_0^{-1}\ln q > \sigma_*. $$
Clearly, for some $C_1 >0$, it holds  that 
$$
y(t) \geq C_1e^{\mu_*  t}, \quad t \geq -n_0. 
$$
Since $y(+\infty) =0$, there exists $T'>0$ such that  $(N_c*y)(s) < \rho, \ \phi(s) > 1 - \rho$ for all $s \geq T'$, then $(N_c*y)(t+s) < \rho,\ \phi(t+s) > 1 - \rho $ for all $t \geq T', s \geq 0$ and 
$$
y(t) = \int_0^{+\infty}A(s)\left( G(0)y(t+s) - \left[\phi(t+s)H((N_c*\phi)(t+s))\right](N_c*y)(t+s)\right)ds
$$
$$
 \geq \int_0^{+\infty}A(s)(G(0)y(t+s)- (1-\rho)\max_{u\in [1-\rho,1]}H(u)(N_c*y)(t+s))ds  
$$
$$
\geq C_1 e^{\mu_* t} \int_0^{+\infty}A(s)e^{\mu_* s}\left(G(0) -  (1-\rho)\max_{u\in [1-\rho,1]}H(u)\int_{-n_0}^{n_0} N_c(v)e^{-\mu_*  v}dv \right)ds
$$
$$
=C_1 \frac{e^{\mu_*  t}}{\mu_*^2-c\mu_* +G(0)} \left(G(0) -  (1-\rho)\max_{u\in [1-\rho,1]}H(u)\int_{-n_0}^{n_0} N_c(v)e^{-\mu_*  v}dv \right) \geq C_1q e^{\mu_*  t},  \quad t \geq T'. 
$$
Repeating the same argument for $y(t)$ on the interval $[T'+n_0,+\infty)$, we find similarly that $y(t) \geq C_1q^2 e^{\mu_*  t},  \ t \geq T'+n_0.$  Reasoning in this way, 
we obtain the estimates
$$y(t) \geq C_1q^{j+1} e^{\mu_*  t}\geq C_1e^{\mu_* T'}e^{(\mu_*+n_0^{-1}\ln q)(t-T')},  \ t \in [T'+n_0j,T'+n_0(j+1)], \quad j=0,1,2\dots 
$$
This yields the following contradiction:  
$$
\sigma_* = \liminf_{t\to +\infty} \frac 1 t \ln y(t) \geq \mu_*+n_0^{-1}\ln q > \sigma_* . 
$$
As a product of the above reasoning, we also get the following statement: 
\begin{lem}\label{bp} Suppose that supp\,$N_c \cap (0, +\infty) \not=\emptyset$ and let $\phi(t)$ be a positive monotone wavefront.  Set $y(t)=1-\phi(t)$ and let $\sigma_*$ be defined as in (\ref{si}). Then $\chi_+(\sigma_*,c)$ is finite and  non-negative. In particular,  $-\lambda_0(c) < \sigma_* <0$ and  $\chi_+(z,c)$ has at least one zero on the interval $[\sigma_*,0]$. 
\end{lem}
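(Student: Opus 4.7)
The plan is to note that the contradiction argument developed in Subsection 4.1 (between equation (\ref{rha}) and the statement of the lemma) already establishes the key inequality $\chi_+(\sigma_*,c)\geq 0$; the remaining conclusions of Lemma \ref{bp} will then follow from the qualitative properties of $\chi_+(\cdot,c)$ listed at the start of Section 4.

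First I would formalise step by step what the argument preceding the lemma actually proves. The estimate (\ref{rha}) combined with the liminf definition (\ref{si}) gives $\sigma\leq \sigma_*\leq 0$, so $\sigma_*$ is a finite real number in $[\sigma,0]$. Assuming toward a contradiction that $\chi_+(\sigma_*,c)<0$ (allowing the value $-\infty$ if $\sigma_*\leq -\lambda_0(c)$), one can choose $\delta,\rho>0$ and $n_0$ large so that the quotient $q$ defined in the excerpt is strictly greater than $1$, pick $\mu_*\in(\sigma_*-\delta,\sigma_*)$ with $\mu_*+n_0^{-1}\ln q>\sigma_*$, and iterate the integral identity for $y(t)$ on translated intervals $[T'+n_0j,T'+n_0(j+1)]$. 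This yields $y(t)\geq C_1 e^{\mu_* T'}e^{(\mu_*+n_0^{-1}\ln q)(t-T')}$ for all sufficiently large $t$, hence $\sigma_*\geq \mu_*+n_0^{-1}\ln q>\sigma_*$, the desired contradiction. Therefore $\chi_+(\sigma_*,c)\geq 0$.

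Next I would use the limit property $\chi_+(-\lambda_0(c)^+,c)=-\infty$, which holds because $G'(1)<0$ and $\frak{I}_c(z)\to +\infty$ as $z\to -\lambda_0(c)^+$. Combined with $\chi_+(\sigma_*,c)\geq 0$, this forces $\sigma_*>-\lambda_0(c)$, so $\chi_+(\sigma_*,c)$ is a finite non-negative number. Moreover, since $\chi_+(0,c)<0$, the possibility $\sigma_*=0$ is excluded, yielding $\sigma_*<0$ as claimed. Finally, continuity of $\chi_+(\cdot,c)$ on $(-\lambda_0(c),\lambda_1(c))$ together with $\chi_+(\sigma_*,c)\geq 0>\chi_+(0,c)$ and the intermediate value theorem produces at least one zero of $\chi_+(\cdot,c)$ in $[\sigma_*,0]$.

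The only genuinely technical step is the first one, i.e.\ formalising the bootstrap that propagates the exponential lower bound for $y(t)$ through successive intervals of length $n_0$; but the excerpt has already carried out exactly this calculation, so the proof of Lemma \ref{bp} amounts to collecting that calculation together with the elementary continuity/asymptotic properties of $\chi_+(\cdot,c)$, and no new estimate is required.
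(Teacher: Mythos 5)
Your proposal is correct and coincides with the paper's own proof: the paper states Lemma \ref{bp} precisely ``as a product of the above reasoning,'' i.e.\ the bootstrap contradiction argument with $q>1$ and $\mu_*+n_0^{-1}\ln q>\sigma_*$ that you reproduce, followed by the same elementary deductions from $\chi_+(0,c)<0$, $\chi_+(-\lambda_0(c)^+,c)=-\infty$ and the intermediate value theorem. No gaps.
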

\begin{remark} \label{Re1} Lemma \ref{USA} below  further   improves the result of Lemma \ref{bp}.  Next, let $\{z: \Re z > \alpha(y)\}\subset \C$ be the maximal open  strip where the Laplace transform $\tilde y(\lambda)$ of $y(t)$ is defined. Since $y(t)$ is bounded on $\R$, we have that $ \alpha(y) \leq 0$. On the other hand, by the definition of $\sigma_*$,  it is easy to see 
that $\lim_{t\to+\infty}y(t)e^{-\lambda t} =+\infty$ for every $\lambda < \sigma_*$. Thus $ \alpha(y) \geq  \sigma_* > -\lambda_0(c)$. Note also that $\alpha(y)$ is a singular point 
of $\tilde y(\lambda)$. %, cf. \cite{LT}. 
\end{remark}

\subsection{Three other auxiliary results}
\begin{lem}\label{USAI} Suppose that supp $N_c \cap (0,+\infty)=\emptyset$ and  let $\phi(t)$ be a monotone wavefront to  the equation (\ref{twe2an}). Then $y(t)=1-\phi(t)$
satisfies 
\begin{equation}\label{areI}
y(t) \geq y(s)e^{(s-t)|H_*|/c} \quad \mbox{for all} \ t \geq s, \ t,s \in \R. \end{equation}
\end{lem}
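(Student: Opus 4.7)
The plan is to reduce the claim to a pointwise upper bound for the logarithmic derivative of $y$ and then exploit a Riccati-type differential inequality. Introduce
$$
f(t):=\frac{\phi'(t)}{y(t)}=\frac{\phi'(t)}{1-\phi(t)}\geq 0,
$$
which is well defined and continuous on $\R$ because the monotone wavefront satisfies $\phi<1$ on $\R$ (otherwise $\phi\equiv 1$ on a half-line and the estimate is trivial on the complement). Also $f(-\infty)=0$ since $\phi'(-\infty)=\phi(-\infty)=0$. Integration gives $\int_s^t f(u)\,du=\ln y(s)-\ln y(t)$, so the target (\ref{areI}) is equivalent to the pointwise bound $f(t)\leq |H_*|/c$ on $\R$.

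To obtain a differential inequality for $f$, I would differentiate and substitute $\phi''=c\phi'-\phi G((N_c*\phi))$ from (\ref{twe2an}):
$$
f'(t)=\frac{\phi''(t)}{y(t)}+f^2(t)=c f(t)-\frac{\phi(t)\,G((N_c*\phi)(t))}{y(t)}+f^2(t).
$$
Since $H<0$ on $[0,1]$, we can write $G(u)=|H(u)|(1-u)$, and the assumption $\mathrm{supp}\,N_c\subset (-\infty,0]$ enters decisively: for $s\leq 0$ one has $t-s\geq t$, so monotonicity of $\phi$ yields
$$
(N_c*\phi)(t)=\int_{-\infty}^{0}N_c(s)\phi(t-s)\,ds\geq \phi(t)\cdot\!\int_\R N_c(s)\,ds=\phi(t),
$$
whence $0\leq 1-(N_c*\phi)(t)\leq 1-\phi(t)$. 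Combined with $\phi\leq 1$ and $|H((N_c*\phi)(t))|\leq |H_*|=\max_{[0,1]}|H|$, this yields
$$
0\leq \frac{\phi(t)\,G((N_c*\phi)(t))}{y(t)}=\phi(t)\,|H((N_c*\phi)(t))|\,\frac{1-(N_c*\phi)(t)}{1-\phi(t)}\leq |H_*|,
$$
and therefore the Riccati-type bound
$$
f'(t)\geq f^2(t)+c f(t)-|H_*|,\qquad t\in\R.
$$

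The remaining step is a blow-up argument by contradiction. The positive root of $x^2+cx-|H_*|=0$ is $g_+=(-c+\sqrt{c^2+4|H_*|})/2$, and from $g_+(g_++c)=|H_*|$ one sees that $g_+<|H_*|/c$. If $f(t_0)>|H_*|/c>g_+$ at some $t_0\in\R$, the comparison principle applied to $f'\geq f^2+cf-|H_*|$ and the scalar Riccati ODE $g'=g^2+cg-|H_*|$, $g(t_0)=f(t_0)$, gives $f(t)\geq g(t)$ on the interval where $g$ is defined. Since $g(t_0)>g_+$ the solution $g$ is strictly increasing, and once $g\geq 2|H_*|/c$ we have $g'\geq g^2$, whence $(-1/g)'\geq 1$, forcing finite-time blow-up of $g$ (and hence of $f$). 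This contradicts the continuity of $f$ on $\R$. Consequently $f\leq |H_*|/c$ on $\R$, which integrated from $s$ to $t\geq s$ is precisely (\ref{areI}).

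I expect the principal subtlety to lie in the algebraic manipulation leading to the Riccati-type inequality, specifically in exploiting the one-sided support of $N_c$ to obtain the clean quotient estimate $(1-(N_c*\phi))/(1-\phi)\leq 1$ that combines with $|H|\leq|H_*|$; once this is in place, the Riccati comparison and blow-up argument are routine.
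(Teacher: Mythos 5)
Your argument rests on the same two observations as the paper's proof --- that $\mathrm{supp}\,N_c\subset(-\infty,0]$ together with the monotonicity of $\phi$ gives $(N_c*\phi)(t)\ge\phi(t)$, hence $0\le 1-(N_c*\phi)(t)\le y(t)$, and that $|H|\le|H_*|$ on $[0,1]$ --- but you package them quite differently. The paper never divides by $y$: setting $z=y'$ it solves the linear equation $z'-cz=-H_*(N_c*y)+r$ with $r\le 0$ and $z(\pm\infty)=0$ to obtain $y'(t)=\int_t^{+\infty}e^{c(t-s)}\bigl(H_*(N_c*y)(s)-r(s)\bigr)\,ds\ge H_*y(t)/c$, and then integrates $(y(t)e^{-tH_*/c})'\ge0$. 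You instead pass to the logarithmic derivative $f=\phi'/y$, derive the Riccati inequality $f'\ge f^2+cf-|H_*|$, and rule out $f>|H_*|/c$ by comparison and finite-time blow-up. Your computation of the quotient bound $\phi\,|H((N_c*\phi))|\,(1-(N_c*\phi))/y\le|H_*|$ is correct, the equivalence of (\ref{areI}) with the pointwise bound on $f$ is correct, and the blow-up argument is sound (indeed $g'\ge g^2$ already once $g\ge|H_*|/c$).

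The one genuine weak point is your treatment of the possibility that $\phi(T)=1$ at some finite $T$. You dismiss it by saying the estimate is then ``trivial on the complement'', but that is not so: if $\phi\equiv1$ on $[T,+\infty)$ while $\phi<1$ on $(-\infty,T)$, then (\ref{areI}) with $s<T\le t$ would read $0\ge y(s)e^{(s-t)|H_*|/c}>0$, which is false; and even for $s,t<T$ your argument does not close, since the comparison solution starting at a point near $T$ may blow up only after $T$, yielding no contradiction with the continuity of $f$ on $(-\infty,T)$. So this case must actually be excluded, not waved away. It can be: because $\mathrm{supp}\,N_c\subset(-\infty,0]$, the time-reversed profile satisfies a retarded functional differential equation with a Lipschitz nonlinearity, and forward uniqueness (the Gronwall argument the paper itself uses in the proof of Corollary \ref{Com} for exactly this support configuration) shows that $y\equiv0$ on a half-line forces $y\equiv0$ on $\R$, contradicting $\phi(-\infty)=0$. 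With that supplement --- or by switching to the paper's division-free computation, which proves (\ref{areI}) directly and thereby shows a posteriori that $y$ never vanishes --- your proof is complete.
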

\begin{proof} By (\ref{efy}),  we have that 
$$
y''(t)-cy'(t) + \left[\phi(t)H((N_c*\phi)(t))\right](N_c*y)(t)=0, \quad t \in \R. 
$$
Clearly, $(N_c*\phi)(t) = \int_{-\infty}^0\phi(t-s)N_c(s)ds \geq \phi(t)\int_{-\infty}^0N_c(s)ds=\phi(t), \ t \in \R$, so that 
$$
y(t) = 1-\phi(t) \geq 1-(N_c*\phi)(t)= (N_c*y)(t), \quad   -\phi(t)H((N_c*\phi)(t)) \leq -H_*,\quad t \in \R. 
$$
Using the notation
\begin{equation}\label{rr}
y'(t) =z(t), \quad r(t) =  [-\phi(t)H((N_c*\phi)(t))  +H_*] (N_c*y)(t) \leq 0, \quad t \in \R,
\end{equation}
we find  that 
$$
z'(t)=cz(t) - H_*(N_c*y)(t)+ r(t), \ t \in \R. 
$$
Since $z(\pm\infty)=0$, we also have that 
$$
y'(t)=z(t) = \int_t^{+\infty}e^{c(t-s)}\left(H_*(N_c*y)(s)- r(s)\right)ds
$$
$$
\geq  H_*\int_t^{+\infty}e^{c(t-s)}(N_c*y)(s)ds\geq  H_*\int_t^{+\infty}e^{c(t-s)}y(s)ds\geq H_*y(t)/c, \quad t \in \R. 
$$
Thus 
$$
(y(t)e^{-tH_*/c})'\geq 0, \quad t \in \R, 
$$
which implies (\ref{areI}). \qed
\end{proof} 
We will also need the next property:
\begin{lem}\label{areIi}There exist  $\rho >0$ such  that 
\begin{equation} \label{rh}
(N_c*y)(t) \geq \rho y(t), \quad  t \in \R.
\end{equation}
\end{lem}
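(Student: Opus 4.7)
The plan is to split the argument according to whether the support of $N_c$ meets $(0,+\infty)$ or is contained in $(-\infty,0]$, exploiting in each case the monotonicity of $y=1-\phi$ (which is non-increasing because $\phi$ is a monotone wavefront with $\phi(-\infty)=0$, $\phi(+\infty)=1$). Beyond Lemma~\ref{USAI}, no further auxiliary ingredient should be needed.

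In the easier case, assume $m:=\int_0^{+\infty}N_c(s)\,ds>0$. For every $s\ge 0$ one has $t-s\le t$, hence by monotonicity $y(t-s)\ge y(t)$. Dropping the nonnegative contribution from $\int_{-\infty}^0$ and integrating over $[0,+\infty)$ then yields
$$(N_c*y)(t)\ge \int_0^{+\infty}N_c(s)\,y(t-s)\,ds \ge m\, y(t),$$
so one may take $\rho=m$.

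The remaining case is $\mathrm{supp}\,N_c\subset(-\infty,0]$. Here $y(t-s)\le y(t)$ throughout the support of $N_c$, so the bare monotonicity bound now goes the wrong way and Lemma~\ref{USAI} must be invoked. Its conclusion $y(t_1)\ge y(t_0)\,e^{(t_0-t_1)|H_*|/c}$ for $t_1\ge t_0$, applied with $t_0=t$ and $t_1=t-s$ for $s\le 0$, gives the pointwise bound $y(t-s)\ge y(t)\,e^{s|H_*|/c}$. Inserting this in the convolution, and choosing $R>0$ so large that $\int_{-R}^0 N_c(s)\,ds\ge 1/2$,
$$(N_c*y)(t)\ge y(t)\int_{-\infty}^0 N_c(s)\,e^{s|H_*|/c}\,ds \ge \tfrac12\, e^{-R|H_*|/c}\, y(t),$$
so that $\rho:=\tfrac12 e^{-R|H_*|/c}>0$ is admissible.

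The only real obstacle is this second case, where the monotonicity inequality on $y$ points the wrong way on all of $\mathrm{supp}\,N_c$ and one needs an a priori exponential lower rate for $y$ to compensate for the possible smallness of $y(t-s)$ relative to $y(t)$. This exponential minorization is exactly the content of Lemma~\ref{USAI}, which is what reduces the whole statement to the short computation above.
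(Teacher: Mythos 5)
Your proof is correct and follows essentially the same route as the paper: the same dichotomy on whether $\mathrm{supp}\,N_c$ meets $(0,+\infty)$, the same use of the monotonicity of $y$ in the first case, and the same application of Lemma~\ref{USAI} in the second (the paper simply takes $\rho_2=\int_{-\infty}^0 e^{s|H_*|/c}N_c(s)\,ds$ directly, while you make its positivity explicit via the cutoff $R$). No issues.
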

\begin{proof}We will distinguish between  two  situations.

\vspace{2mm}

\noindent {\sf Case 1:} supp $N_c \cap (0,+\infty)\not=\emptyset$.  Then there exists $m >0$ such that $\rho_1:= \int_0^mN_c(s)ds >0$ and 
$$
\int_\R y(t-s)N_c(s)ds \geq \int_0^my(t-s)N_c(s)ds \geq  \rho_1 y(t), \quad t \in \R.
$$

\noindent {\sf Case 2:} supp $N_c \cap (0,+\infty)=\emptyset$.  Then, by Lemma \ref{USAI},  we have,  for $t \in \R$,  
$$
(N_c*y)(t) = \int_{-\infty}^0y(t-s)N_c(s)ds \geq  y(t) \int_{-\infty}^0e^{s|H_*|/c}N_c(s)ds=:\rho_2 y(t).
$$
In any event, (\ref{rh}) holds with $\rho \in \{\rho_1,\rho_2\}$. \qed
\end{proof}

\begin{lem}\label{USA} Suppose that $G(0)= \max_{u\geq 0}G(u)$ and, for some $\beta \in (0,1]$, 
\begin{equation}\label{Hi}
H(u)-G'(1) = O((1-u)^\beta), \ u \to 1^-. 
\end{equation}
Let $\phi(t)$ be a monotone wavefront to  equation (\ref{twe2an}). Then there exist $t_1, t_2 \in \R$  such that  
\begin{equation}\label{are}
\phi(t+t_1) = (-t)^je^{\lambda(c)t}(1+o(1)), \ t \to -\infty, \quad \phi(t+t_2) = 1-t^ke^{\hat zt}(1+o(1)), \ t \to +\infty. 
\end{equation}
where $j=0$ if $c >2\sqrt{G(0)}$ and $j=1$ when $c=2\sqrt{G(0)}$; $k \in \{0,1,2,3\}$ and  $\hat z=\hat z(\phi)$ is a negative zero of  the characteristic function 
$\chi_+(z,c)$.   \end{lem}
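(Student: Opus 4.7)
The plan is to derive each of the two asymptotics in (\ref{are}) via an Ikehara-type Tauberian theorem applied to a one-sided Laplace transform: $\Phi(\lambda):=\int_{-\infty}^0\phi(t)e^{-\lambda t}dt$ for the behaviour at $-\infty$, and $\Psi(\lambda):=\int_{0}^{\infty}y(t)e^{-\lambda t}dt$ for the behaviour at $+\infty$, where $y:=1-\phi$. The two characteristic functions $\chi(z):=z^2-cz+G(0)$ and $\chi_+(z,c)$ determine the admissible exponential rates, while the hypotheses $G(0)=G^*$ and (\ref{Hi}) guarantee that the nonlinear remainders to the linearizations at $0$ and $1$ have Laplace transforms extending \emph{strictly} further to the left than those of $\phi$ and $y$, which is the analytic prerequisite for the Ikehara machinery.

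For the behaviour at $-\infty$, Lemma \ref{po} yields $\phi'(t)/\phi(t)<\lambda(c)$, so $\phi(t)=O(e^{\lambda(c) t})$ at $-\infty$ and $\Phi$ is holomorphic on $\{\Re\lambda<\lambda(c)\}$. Rewriting (\ref{twe2an}) as
$$
\phi''(t)-c\phi'(t)+G(0)\phi(t)=\phi(t)\bigl[G(0)-G((N_c*\phi)(t))\bigr],
$$
the right-hand side is $O(\phi(t)(N_c*\phi)(t))=O(e^{2\lambda(c) t})$ at $-\infty$, since $\underline{D}G(0^+)$ is finite. Laplace-transforming yields a functional equation $\chi(\lambda)\Phi(\lambda)=P(\lambda)+\tilde R(\lambda)$, where $P$ is the polynomial produced by boundary terms and $\tilde R$ is holomorphic for $\Re\lambda<2\lambda(c)$, hence strictly past $\lambda(c)$ because $2\lambda(c)\leq \lambda(c)+\mu(c)$. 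The rightmost real singularity of $\Phi$ must therefore coincide with the zero $\lambda(c)$ of $\chi$, which is simple when $c>2\sqrt{G(0)}$ and double when $c=2\sqrt{G(0)}$. The classical Ikehara--Carr--Chmaj theorem then delivers $\phi(t)\sim C(-t)^{j}e^{\lambda(c) t}$ with $j=0$ or $j=1$ respectively, and a translation $t_1$ absorbs the constant $C$.

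For the behaviour at $+\infty$, using $H(u)=G(u)/(u-1)$ I rewrite (\ref{twe2an}) as
$$
y''(t)-cy'(t)+G'(1)(N_c*y)(t)=\bigl[G'(1)-\phi(t)H((N_c*\phi)(t))\bigr](N_c*y)(t).
$$
Hypothesis (\ref{Hi}) and $\phi=1-y$ bound the right-hand side by $O\bigl(y(t)(N_c*y)(t)+(N_c*y)(t)^{1+\beta}\bigr)$. Let $\alpha(y)$ be the abscissa of convergence of $\Psi$; by Remark \ref{Re1}, $\alpha(y)\in[\sigma_*,0)$ with $\sigma_*>-\lambda_0(c)$. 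Since $y$ is non-increasing, non-negative and bounded, for every small $\epsilon>0$ one has the pointwise bound $y(t)\leq C_\epsilon e^{(\alpha(y)+\epsilon) t}$ on $\R$; this bound transfers to $N_c*y$ via the finiteness of $\frak{I}_c(\alpha(y)+\epsilon)$ (available because $\alpha(y)>-\lambda_0(c)$), and both nonlinear terms become $O\bigl(e^{(1+\beta')(\alpha(y)+\epsilon) t}\bigr)$ with $\beta':=\min(1,\beta)>0$, strictly smaller than $e^{\alpha(y)t}$ once $\epsilon$ is small because $\alpha(y)<0$. Laplace-transforming gives $\chi_+(\lambda,c)\Psi(\lambda)=P_+(\lambda)+\tilde S(\lambda)$ with $\tilde S$ holomorphic on a half-plane strictly containing $\{\Re\lambda>\alpha(y)\}$. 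Since $y\geq 0$, Pringsheim's theorem ensures that $\alpha(y)$ is an actual singularity of $\Psi$; it therefore must coincide with a negative real zero $\hat z$ of $\chi_+(\cdot,c)$. Because $\chi_+^{(4)}(z,c)<0$, any real zero has multiplicity at most four, so the Ikehara-type theorem adapted to a multiple pole yields $y(t)\sim C' t^{k}e^{\hat z t}$ with $k\in\{0,1,2,3\}$, and the translation $t_2$ normalizes $C'$ to $1$.

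The main obstacle is the second step, specifically the verification that $\tilde S$ extends strictly left of $\alpha(y)$: the $O(y(N_c*y))$ part is immediate from $\alpha(y)<0$, but the $O((N_c*y)^{1+\beta})$ part first requires upgrading the mere convergence of $\Psi$ to a genuine pointwise exponential bound on $y$ (using monotonicity of $y$) and then transferring this bound to $N_c*y$ using the assumed integrability of $K$ at rate $\alpha(y)+\epsilon$. Once this is in place, the remaining input is the now-classical Ikehara--Delange--Mallet-Paret machinery, which produces sharp exponential asymptotics with polynomial prefactors whenever the rightmost real singularity of the relevant transform is a multiple pole; the polynomial exponents $j$ and $k$ emerge exactly as the multiplicities of the corresponding real zeros of $\chi$ and $\chi_+(\cdot,c)$ minus one.
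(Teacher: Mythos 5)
Your overall strategy (one-sided Laplace transforms plus a Pringsheim/Ikehara-type singularity analysis) is essentially the machinery packaged inside the paper's appeal to \cite[Lemma 22]{TAT}, but two of the analytic inputs you invoke are not available where you use them, and they are precisely the estimates the paper spends most of its proof establishing. At $-\infty$: Lemma \ref{po} gives $\phi'(t)/\phi(t)<\lambda(c)$, i.e. $\phi(t)e^{-\lambda(c)t}$ is \emph{decreasing}, which for $t\le 0$ yields the \emph{lower} bound $\phi(t)\ge\phi(0)e^{\lambda(c)t}$ --- not the upper bound $\phi(t)=O(e^{\lambda(c)t})$ that you need in order to place the abscissa of $\Phi$ at $\lambda(c)$ and to make the nonlinear remainder $O(e^{2\lambda(c)t})$. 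The exponential \emph{upper} bound at $-\infty$ requires a separate argument (the paper gets it from the exponential instability of $y''-cy'+G(0)y=0$, both characteristic roots being positive). Relatedly, your identification of the singularity with $\lambda(c)$ rather than $\mu(c)$ is unsupported as written: for $2\sqrt{G(0)}\le c<\sqrt{4.5\,G(0)}$ one has $\mu(c)<2\lambda(c)$, so $\mu(c)$ lies inside your strip of holomorphy of $\tilde R$ and is a competing candidate; the paper excludes it via the sub-tangency $\hat R(t)\le 0$ (alternatively, the correctly read lower bound from Lemma \ref{po} forces the abscissa to be $\le\lambda(c)$ and does the job).

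At $+\infty$ the gap is more serious: you take $\alpha(y)<0$ from Remark \ref{Re1}, but that remark only gives $\sigma_*\le\alpha(y)\le 0$. The strict negativity of $\alpha(y)$ --- a genuine exponential decay rate for $y$ --- is the first and main technical step of the paper's proof: it needs Lemma \ref{areIi} (the bound $(N_c*y)(t)\ge\rho\,y(t)$) to convert the profile equation into $y''-cy'-\kappa y=h$ with $h(t)>0$ for large $t$, whence $y(t)\le y(s)e^{l(t-s)}$ with $l<0$. Your proposed ``upgrade of the convergence of $\Psi$ to a pointwise bound via monotonicity'' cannot substitute for this, because it presupposes the very fact $\alpha(y)<0$ that is in question; if $\alpha(y)=0$ the bound $y(t)\le C_\epsilon e^{(\alpha(y)+\epsilon)t}$ is vacuous and the inequality $(1+\beta')(\alpha(y)+\epsilon)<\alpha(y)$, on which the holomorphic continuation of $\tilde S$ past $\alpha(y)$ rests, fails. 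Once these two decay estimates are supplied, your Tauberian route does go through and is essentially equivalent to the paper's argument; as written, however, both halves of the proof rest on estimates that have not been proved (and, in the case of Lemma \ref{po}, on an inequality read in the wrong direction).
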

\begin{proof} \underline{Asymptotic representation of $\phi$ at $+\infty$}. 
Our first step is to establish that $y(t)=1-\phi(t)$ has an exponential rate of convergence to $0$ at $+\infty$.

Since $\phi(+\infty)=1$, we can indicate $T_0$ sufficiently large to satisfy
$$
-\phi(t)H((N_c*\phi)(t)) > -0.5\,G'(1), \quad t \geq T_0. 
$$ 
With the positive number  
$
\kappa = -0.5\,\rho\, G'(1),
$
we can rewrite equation (\ref{efy}) as 
$$
y''(t)-cy'(t) - \kappa y(t) = h(t), \quad \mbox{where} \ h(t):= -\phi(t)H((N_c*\phi)(t))(N_c*y)(t)  - \kappa y(t) , \ t \in \R. 
$$
Importantly, for $t \geq T_0$, 
$$
h(t) > (N_c*y)(t)\left(-0.5\,G'(1) - \frac{\kappa}{\rho}\right)=0.
$$
Next, similarly to (\ref{bl}) (see also \cite[Lemma 20, Claim I]{GT} for more detail), we obtain 
$$
y'(t) -l y(t) = - \int_t^{+\infty}e^{m
(t-s)}h(s)ds < 0, \quad t \geq T_0, 
$$
where $l <0< m$ are the roots of the characteristic equation $z^2 -cz -\kappa =0$. Thus 
$$
y(t) \leq y(s)e^{l(t-s)}, \quad t \geq s \geq T_0, \quad \mbox{where} \ \alpha(y) \leq l= 0.5\left(c-\sqrt{c^2+4\kappa}\right)<0.  
$$
{Hence, by Remark \ref{Re1},  $\frak{I}_{c}(l)$ is a finite number.
Combining the latter exponential  estimate with the results of  Lemma \ref{USAI} (if supp $N_c \cap (0,+\infty)=\emptyset$) or inequality (\ref{rha}) (if supp $N_c \cap (0,+\infty)\not=\emptyset$), we conclude that $y(t)$ has an exponential rate of convergence at $+\infty$. } Moreover,  the same is true for  $y'(t)$ because of the following estimates
$$R(t):= -\left[\phi(t)H((N_c*\phi)(t))\right](N_c*y)(t) \leq  |H_*|(N_c*y)(t) =  |H_*|\int_{-\infty}^{t-T_0} N_c(s)y(t-s)ds
$$
$$+ |H_*|\int^{+\infty}_{t-T_0} N_c(s)y(t-s)ds \leq  |H_*|\int_{-\infty}^{t-T_0} N_c(s)y(T_0)e^{l(t-s-T_0)}ds+ |H_*|\int^{+\infty}_{t-T_0} N_c(s)e^{-ls}e^{ls}ds  $$
$$ \leq  e^{l(t-T_0)}|H_*| \int_{-\infty}^{t-T_0} N_c(s)e^{-ls}ds+  e^{l(t-T_0)} |H_*|\int^{+\infty}_{t-T_0} N_c(s)e^{-ls}ds =  e^{l(t-T_0)}|H_*|\frak{I}_{c}(l)
$$
and 
$$
y'(t)= -\int_t^{+\infty}e^{c(t-s)}R(s)ds\geq -\frak{I}_{c}(l)e^{-lT_0}|H_*|\int_t^{+\infty}e^{c(t-s)}e^{ls}ds= \frac{\frak{I}_{c}(l)e^{-lT_0}|H_*|}{l-c}e^{lt}.
$$
The latter representation of $y'(t)$ is deduced from (\ref{efy}) which also implies that 
\begin{equation}\label{efyk}
y''(t)-cy'(t) +\left(G'(1)+\epsilon(t)\right){(N_c*y)(t)}=0, \quad t \in \R,
\end{equation}
where 
$
\epsilon(t):= \left[\phi(t)H((N_c*\phi)(t))\right] - G'(1). 
$
By  (\ref{Hi}),  
$$\epsilon(t) = (H((N_c*\phi)(t)) - G'(1)) - H((N_c*\phi)(t))y(t)
= O(e^{l\beta t}), \ t \to +\infty.  
$$
Then, in view of Remark \ref{Re1}, an application of \cite[Lemma 22]{TAT} shows that $y(t) = w_0(t)(1+o(1)),$ \  $t \to +\infty$,  where $w_0(t)$ is a non-zero eigensolution of the equation 
$w''(t)-cw'(t) + G'(1)(N_c*w)(t)=0$ corresponding to some its negative eigenvalue $\hat z$. As we have already mentioned, the multiplicity of $\hat z$ is less or equal to 4. This proves the second representation in (\ref{are}). 

\vspace{2mm}

 \noindent \underline{Asymptotic representation of $\phi$ at $-\infty$}.  Since the linear equation $y''-cy'+G(0)y=0$ with $c \geq 2\sqrt{G(0)}$ is exponentially unstable,  so is the following equation 
$$
\phi''(t) - c\phi'(t) + (G(0)+\hat R(t))\phi(t)=0,  \quad t \in \R, 
$$
where 
$
\hat R(t) = {G((N_c*\phi)(t))}-G(0) 
$
and $\hat R(-\infty)=0.$%,  at $-\infty$
  
  This assures at least the exponential rate of convergence of $\phi(t), \phi'(t)$ to $0$ at $-\infty$.  On the other hand, $\phi(t), \phi'(t)$ has no more than exponential rate of decay 
 at $-\infty$, cf.  \cite[Lemma 6]{ST}.  Again, an application of \cite[Lemma 22]{TAT} shows that $y(t) = v_0(t)(1+o(1)),$  \ $ t \to +\infty$,  where $v_0(t)$ is the non-zero eigensolution of the equation 
$v''(t)-cv'(t) +G(0)v(t)=0$ corresponding to one of the positive eigenvalues {$\lambda(c), \mu(c)$}. Finally, since the function $F(u,v) = u\, G(v)$  satisfies the sub-tangency condition at   zero equilibrium (this assures that $\hat R(t) \leq 0$, $t \in \R$), we conclude that  the correct eigenvalue in our case is precisely {$\lambda(c)$}, see \cite[Section 7]{GT}  for the related computations and further details. \qed
 \end{proof} 
\begin{cor}  \label{Com} Suppose that $G$ is a strictly decreasing function satisfying (\ref{Hi}). Let $\psi(t), \phi(t)$ be {different} monotone wavefronts to   (\ref{17nl}). 
Then there exist $t_3, t_4 { \in \R}$ such that \quad $\psi(t+t_3) \not= \phi(t+t_4)$ for all $t \in \R,$ meanwhile $\phi(t+t_3),$ $\psi(t+t_4)$ have equal principal  asymptotic terms at $-\infty$:
$$
\phi(t+t_3) = (-t)^je^{{\lambda(c)}t}(1+o(1)), \ \psi(t+t_4) = (-t)^je^{{\lambda(c)}t}(1+o(1)), \ t \to -\infty. 
$$
\end{cor}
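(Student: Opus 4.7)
\emph{Plan.} The plan is first to pin down the shifts $t_3, t_4$ by asymptotic normalization at $-\infty$ via Lemma~\ref{USA}, and then to exclude any intersection of the \emph{swapped} pair by a maximum-principle/sliding argument that exploits only the strict monotonicity of $G$ and the monotonicity of the wavefronts. Applying Lemma~\ref{USA} separately to $\phi$ and $\psi$ furnishes (unique) shifts $t_3, t_4 \in \R$ for which
\[
\phi(t+t_3)=(-t)^j e^{\lambda(c) t}(1+o(1)), \qquad \psi(t+t_4)=(-t)^j e^{\lambda(c) t}(1+o(1)), \quad t \to -\infty,
\]
which immediately covers the ``meanwhile'' clause. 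Set $\Delta := t_4-t_3$ and $\tilde\phi := \phi(\cdot+t_3)$, $\tilde\psi := \psi(\cdot+t_4)$; since $\phi$ and $\psi$ are not translates of each other, the identity $\tilde\phi \equiv \tilde\psi$ would force $\psi(s)=\phi(s+\Delta)$, so $\tilde\phi \not\equiv \tilde\psi$.

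\emph{Strict ordering of the swapped pair at $-\infty$.} Writing $\psi(t+t_3)=\tilde\psi(t-\Delta)$ and $\phi(t+t_4)=\tilde\phi(t+\Delta)$ and inserting the normalization above yields, as $t \to -\infty$,
\[
\psi(t+t_3)\sim e^{-\lambda(c)\Delta}(-t)^j e^{\lambda(c)t}, \qquad \phi(t+t_4)\sim e^{\lambda(c)\Delta}(-t)^j e^{\lambda(c)t}.
\]
If $\Delta \neq 0$, after swapping the roles of $\phi, \psi$ to arrange $\Delta >0$, the two leading coefficients differ and $\phi(\cdot+t_4) > \psi(\cdot+t_3)$ on some half-line $(-\infty, T)$. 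In the degenerate subcase $\Delta=0$ (where the swapped pair coincides with $\tilde\phi, \tilde\psi$ themselves), I would extract a non-vanishing sub-leading asymptotic term for $\tilde\phi-\tilde\psi$ at $-\infty$ coming from the next characteristic root $\mu(c)$ of $\chi(z)=z^2-cz+G(0)=0$; since the principal terms agree while $\tilde\phi \not\equiv \tilde\psi$, this next-order coefficient is non-trivial and again installs strict ordering on some $(-\infty, T)$.

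\emph{Propagating the ordering to all $t\in \R$.} Suppose, for contradiction, that $t^* := \inf\{t\in \R : \phi(t+t_4)=\psi(t+t_3)\}$ is finite. At $t^*$ the profiles touch from above, so $\phi(t^*+t_4)=\psi(t^*+t_3)$, $\phi'(t^*+t_4) \leq \psi'(t^*+t_3)$ and $\phi''(t^*+t_4) \geq \psi''(t^*+t_3)$. Evaluating the profile equation~(\ref{twe2an}) at $t^*+t_4$ for $\phi$ and at $t^*+t_3$ for $\psi$, dividing through by the common positive value, and invoking the strict monotonicity of $G$, one reaches $(N_c*\phi)(t^*+t_4) \leq (N_c*\psi)(t^*+t_3)$. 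On the other hand, the pointwise inequality $\phi(\cdot+t_4) \geq \psi(\cdot+t_3)$ on $(-\infty, t^*]$ together with the non-negativity of $N_c$ produces the opposite convolution inequality at $t^*$, forcing equality throughout the support of the kernel. A standard unique-continuation step then propagates this equality to all of $\R$, yielding $\tilde\phi \equiv \tilde\psi$ and contradicting our standing hypothesis that $\phi$ and $\psi$ are not translates.

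\emph{Main obstacle.} The principal technical difficulty is carrying out this final comparison step \emph{without} the sub-tangency of $u\,G(v)$ at the positive equilibrium (a property essential in the Fang--Zhao proof \cite{FZ} but unavailable here for a general strictly decreasing $G$ --- in particular for the food-limited model with $\gamma>0$). The argument must rely only on the strict monotonicity of $G$, the monotonicity of the profiles, and the first- and second-order asymptotics delivered by Lemma~\ref{USA}, in the spirit of \cite{HT,ST}. The subcase $\Delta=0$ is the most delicate: the leading-order comparison is saturated and one must exploit the next characteristic root $\mu(c)$ of the linearization at the trivial equilibrium to seed the initial strict ordering on $(-\infty, T)$.
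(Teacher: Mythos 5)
There is a genuine gap, and it lies in the order of operations. You fix the shifts \emph{first} by matching the principal asymptotic terms at $-\infty$ via Lemma~\ref{USA}, and only \emph{then} try to prove that this matched pair is globally ordered. The paper does the opposite, and for good reason: the ordering and the asymptotic matching cannot be decoupled this way. The paper's proof runs a global sliding argument -- using the asymptotics of Lemma~\ref{USA} at \emph{both} ends $\pm\infty$ (you never use the $+\infty$ data $\hat z, k$ at all) to show that $\psi(\cdot+B)>\phi$ on all of $\R$ for large $B$, taking the extremal shift $s_*=\inf\frak{A}$, upgrading $\geq$ to $>$ via the monotone operator $({\mathcal F}f)(t)=[G(0)-G((N_c*f)(t))]f(t)$, the positive kernel $A(s)$, and a unique-continuation step -- and then faces a dichotomy: either $\kappa_*=\lim_{t\to-\infty}\psi(t)/\phi(t)=1$ and the extremal pair is already matched at $-\infty$, or $\kappa_*>1$, in which case optimality of $s_*$ forces matching at $+\infty$ and a \emph{second} sliding in the opposite direction (minimal $d_*>0$ with $\phi(\cdot+d_*)>\psi$) produces, again by optimality, the pair that is simultaneously strictly ordered and matched at $-\infty$. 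The matching is a \emph{consequence} of the extremality of the slide, not an a priori normalization.

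Two concrete steps of your argument fail. First, your ``propagating the ordering'' step is a local first-touching-point argument: at $t^*$ you claim the pointwise ordering on $(-\infty,t^*]$ yields the reverse convolution inequality. But $(N_c*\phi)(t^*)=\int_\R N_c(s)\phi(t^*-s)\,ds$ samples the profile at points $t^*-s>t^*$ whenever $\operatorname{supp}N_c\cap(-\infty,0)\neq\emptyset$, where no ordering is known; the hypotheses of Theorem~\ref{T12} explicitly allow such kernels. (Even for purely retarded kernels the sign bookkeeping at a one-sided touching point does not close without the global integral representation.) Second, in the only case that actually matters for the corollary ($\Delta=0$, the matched pair), your seed ordering near $-\infty$ rests on the assertion that $\tilde\phi-\tilde\psi$ has a non-vanishing asymptotic term $Ae^{\mu(c)t}$. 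This is unproved: you would need a second-order expansion of the difference of two solutions of the nonlinear, nonlocal equation (\ref{twe2an}) at $-\infty$, you give no reason why $A\neq 0$, and no reason why the sub-leading behaviour is governed by $\mu(c)$ rather than by nonlinear corrections of order $e^{2\lambda(c)t}$ (which dominate $e^{\mu(c)t}$ whenever $2\lambda(c)<\mu(c)$). The first half of your plan (obtaining the ``meanwhile'' clause from Lemma~\ref{USA}) is fine, but the non-intersection half needs the paper's two-sided sliding machinery, not a local comparison seeded at $-\infty$.
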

\begin{proof} By Lemma \ref{USA}, there are $t_1, t_2, t_1', t_2', q_1 \in \{0,1,2,3\},$ such that (\ref{are}) holds together with 
\begin{equation}\label{arek}
\psi(t+t_1') = (-t)^je^{{\lambda(c)}t}(1+o(1)), \ t \to -\infty, \ \psi(t+t_2') = 1-t^{q_1}e^{{\hat{z}_1}t}(1+o(1)), \ t \to +\infty,
\end{equation}
where ${\hat{z}_1=\hat{z}_1(\psi)}$ is a negative root of the characteristic equation $\chi_+(z,c)=0$. After realizing appropriate translations of profiles, 
without loss of generality, we can assume that $t_1=t_1'=0$. 

Suppose that $\hat z > {\hat{z}_1}$ or $k \geq q_1$ if $\hat z= {\hat{z}_1}$, then there is a sufficiently large  $B>0$ such that 
$$
\psi(t+B) > \phi(t), \quad t \in \R. 
$$
Since $\psi(t)$ is an increasing function,  for all $s \geq B$, 
$$
\psi(t+s) > \phi(t), \quad t \in \R. 
$$
Set $\frak A:= \{s: \psi(t+s) > \phi(t), \  t \in \R \}$.  %denote the set of all $a$ such that the latter inequality holds. 
Clearly, $\frak A$ is a below bounded set and therefore the number  $s_*= \inf \frak A$
is finite and 
$$
\psi(t+s_*) \geq \phi(t), \quad t \in \R. 
$$
In what follows, to simplify the notation, we suppose that $s_*=0$.  
Observe that,  since $\psi, \phi$ are different wavefronts, the difference $\delta(t)= \psi(t)-\phi(t)$ is a non-zero non-negative function satisfying 
$\delta(-\infty)=\delta(+\infty) =0$. 
We claim that actually $\delta(t) >0, \ t \in \R$, i.e. 
\begin{equation}\label{IN}
\psi(t) > \phi(t), \quad t \in \R. 
\end{equation}
Indeed, otherwise there exists some $s_1 \in \R$ such that $\delta(s_1) =0$. With  the notation
$$
({\mathcal F}f)(t) := [G(0)-G((N_c*f)(t))]f(t)
$$
we have that 
$$
 \delta''(t) - c\delta'(t) +G(0)\delta(t) =  ({\mathcal F}\psi)(t) - ({\mathcal F}\phi)(t) \geq 0, \quad t \in \R,
$$
since 
$({\mathcal F}f)(t)$ %:= (G(0)-G(\varphi*N_c)(t))\varphi(t)
%$$
is an operator non-decreasing  in $f.$%$\varphi$. 

Therefore 
$$
\delta(t) = \int_0^{+\infty}A(s)  \left[({\mathcal F}\psi)(t+s) - ({\mathcal F}\phi)(t+s)\right]ds \geq 0, \quad t \in \R. 
$$
{where $A$ is defined in Section \ref{S22}.}
Since $A(s) >0$ for $s >0$ and $\delta({s_1})=0$, we get immediately that $({\mathcal F}\psi)(s) = ({\mathcal F}\phi)(s)$ for all $s\geq {s_1}$. Clearly, {since $G(u)$ is a strictly decreasing function} this means that 
$$
 \psi(t) = \phi(t) \ (\mbox{i.e.}\ \delta(t)=0), \quad (N_c*\psi)(t) = (N_c*\phi)(t), \quad t \geq {s_1}.
$$
Next, suppose that supp $N_c \cap (0,+\infty)\not=\emptyset$ and that 
$[s',+\infty),$ $s'\in \R$, is the maximal interval where $\delta(t)=0$. Then $\int_{s'-s}^{+\infty}\delta(u)du >0$ for every $s>0$.  Furthermore, since 
$$\int_\R\delta(t-s)N_c(s)ds = (\delta*N_c)(t)=0, \quad t\geq s',$$
we obtain the following contradiction: 
$$
0= \int_{s'}^{+\infty}\left(\int_0^{+\infty}\delta(t-s)N_c(s)ds\right)dt =  \int_0^{+\infty}N_c(s)\left(\int_{s'-s}^{+\infty}\delta(u)du\right)ds >0,
$$
Thus $s'=-\infty$ and $\psi(t) = \phi(t)$ for all $t\in \R$ contradicting  our initial assumption that $\phi$ and $\psi$ are different wavefronts. This proves (\ref{IN}) when 
supp $N_c \cap (0,+\infty)\not=\emptyset$. 

We will use another method when supp $N_c \subseteq  (-\infty,0]$.  In such a case, both functions $v_1(t):= 1-\phi(-t)$ and $v_2(t):=1- \psi(-t)$ solve
the  initial value problem 
\begin{equation}\label{ud}
 v''(t) + cv'(t) -(1-v(t))G\left(1-\int_{-\infty}^0v(t+s)N_c(s)ds\right)=0, 
\end{equation}
$$
 v(\sigma-s') = 1-\phi(s'-\sigma)= 1- \psi(s'-\sigma), \ \sigma \leq 0,\quad v'(-s') = \phi'(s') = \psi'(s').$$
Due to the optimal nature of $s'$, the solutions  $v_1(t)$ and $v_2(t)$ do not coincide on the intervals $(-s', -s'+\epsilon)$ for $\epsilon >0$. 
On the other hand, since the function $g(x,y)= (1-x)G(1-y)$ is globally Lipschitzian  
in the square $[0,1]^2\subseteq \R^2$, we can use the standard 
argumentation\footnote{It suffices to rewrite (\ref{ud}) in an equivalent form of a system of integral equations and then, after some elementary transformations,  to apply the Gronwall-Bellman inequality.} to prove that, for all sufficiently small $\epsilon>0$,  $v_1(t)=v_2(t)$ for  $t\in (-s', -s'+\epsilon)$. Thus again we get a contradiction proving (\ref{IN}) when 
supp $N_c \cap (0,+\infty)=\emptyset$. 

\vspace{2mm}

Next, clearly, 
$$
\kappa_*:=\lim_{t\to -\infty}\psi(t)/\phi(t) \geq 1, \quad \lim_{t\to +\infty}(1-\psi(t))/(1-\phi(t)) \leq 1.
$$
If $\kappa_*=1$, then $\psi(t)$ and $\phi(t)$ have the same asymptotic behavior at $-\infty$ and the {Corollary  \ref{Com} is proved, taking $t_3 = t_4 = 0$}. 

So, let us suppose  that  $\kappa_* >1$. 
Then the optimal nature of $s_*=0$ implies that $\psi(t)$ and $\phi(t)$ have the same asymptotic behavior at $+\infty$. Thus  $\hat z = {\hat{z}_1},$ $ k=q$ and 
$$
\lim_{t\to +\infty}{(1-\psi(t))}/{(1-\phi(t))} =  1.  
$$
But then, for all sufficiently large positive {$D$,}  
$$
\phi(t+ {D}) > \psi(t), \ t \in \R. 
$$
We can now argue as before to establish the existence of the minimal positive $d_*$ such that 
$$
\phi(t+d_*) > \psi(t), \ t \in \R. 
$$
{Then the  optimal character of $d_*$  implies  that 
  $$
\lim_{t\to -\infty}\psi(t)/\phi(t+d_*) =  1. 
$$
Since $d_*>0$ we also have that
$$
 \lim_{t\to +\infty}(1-\phi(t+d_*))/(1-\psi(t)))=  e^{\hat zd_*} <1.
$$
}
This completes the proof of Corollary   \ref{Com} (where {$t_3 =d_*$ and  $t_4 = 0$ %$\phi(t+t_3) := \phi(t+b_*)$ and $\psi(t+t_4): = \psi(t)$ 
should be taken)}  in the case $\kappa_* >1$.\qed
 \end{proof}
 
 %%%%%%%%%%%%%%%%%%%%%%%%%%%%%%%%%%%%%%%%%%%%%%%%%%%%%%%%%%%%=

\subsubsection{Proof of Theorem \ref{T12}}
Suppose that there are two different wavefronts, $\phi(t)$ and $\psi(t)$ to equation (\ref{twe2an}).  By Corollary  \ref{Com}, without restricting the generality, we can 
assume that, for each $z \in \R$, 
$$
w(t):=(\psi(t)-\phi(t))e^{-z t} >0, \ t \in \R, \quad  w(t) = e^{({\mu(c)-z})t}(A+o(1)),\  t \to -\infty.
$$
Take now some $z \in (\lambda(c),\mu(c))$ if $c >2\sqrt{G(0)}$ and $z=\lambda(c)=\mu(c)=\sqrt{G(0)}$ if $c=2\sqrt{G(0)}$. Then $w(t)$ is bounded on $\R$ and satisfies
the following equation for all $t \in \R$:
\begin{equation}\label{38}
w''(t)-(c-2z)w'(t)+(z^2-cz+G(0))w(t) = e^{-{z} t}\left( ({\mathcal F}\psi)(t) -  ({\mathcal F}\phi)(t) \right).
\end{equation}
Next, if $c >2\sqrt{G(0)}$, then ${\mu(c)-z} >0$ and therefore $w(-\infty)=w(+\infty)=0$. This means that, for some $t^*$, 
$$
w(t^*)= \max_{s \in \R}w(s) >0, \ w''(t^*) \leq 0, \ w'(t^*) =0. 
$$
Then, evaluating (\ref{38}) at $t^*$ and noting that $z^2-cz+G(0) <0$,  $({\mathcal F}\psi)(t) >  ({\mathcal F}\phi)(t),$ $t \in \R$,  we get a contradiction in signs. 
This proves the uniqueness of all non-critical wavefronts. 

Suppose now that $c=2\sqrt{G(0)}$, then the equation (\ref{38})  takes the form 
$$
w''(t) =e^{-z t}\left( ({\mathcal F}\psi)(t) -  ({\mathcal F}\phi)(t) \right) >0,\quad  t \in \R.
$$
Since $w(+\infty)=0$, this implies that $w'(t)<0$ for all $t \in \R$. Clearly, the inequalities $w'(t)<0, \ w''(t) >0$, $t \in \R$, are not compatible with the boundedness of  $w(t)$ at $-\infty$. This proves the uniqueness of the minimal wavefront. 

%%%%%%%%%%%%%%%%%%%%%%%%%%%%%%%%%%%%%%%%%%%%%%%%%%%%%%%%%%%

\section{On the existence of non-monotone and non-oscillating wavefronts}
The main working tool in this section is  the singular perturbation theory developed by Faria {\it et al} in \cite{fhw,FTnl}. More specifically, we will invoke several results from \cite{FTnl}. For the reader's convenience, they are resumed as  Theorem \ref{TF} in the Appendix.

\subsection{Nonlocal food-limited model with a weak generic delay kernel: proof of Theorem \ref{T20IN}}
Here, following \cite{GG,GC,OW,TPT,WL}, we study the non-local food-limited model 
(\ref{i1IN})
with the  so-called weak generic delay kernel 
(\ref{kernelIN}). 
As in \cite{GC}, %it is convenient to 
after introducing the function $v(t,x)= (K *u)(t,x)$, %rewriting 
we rewrite the model  (\ref{i1IN}), (\ref{kernelIN})  as the system of two coupled reaction-diffusion equations
\begin{equation}\label{s1}
u_t=u_{xx} +u\left(\frac{1-v}{1+\gamma v}\right), \quad v_t=v_{xx} +\frac{1}{\tau}(u-v). 
\end{equation}
Then the task of determining  semi-wavefronts $u(t,x) = \varphi(x+ct)$ to  (\ref{i1IN}), (\ref{kernelIN}) is equivalent to the problem of finding wave solutions 
\begin{equation*}\label{epf}
u(t,x) = \phi(\sqrt{\epsilon} x + t), \quad v(t,x) = \psi(\sqrt{\epsilon} x + t), \quad \epsilon = c^{-2}, 
\end{equation*}
for the system (\ref{s1}). The profiles $\phi, \psi$ satisfy the equations
%\begin{equation}\label{MFl}
\begin{equation}\label{ch}
\epsilon \phi'' -\phi' + \phi\frac{1-\psi}{1+\gamma\psi}=0, \quad \epsilon \psi'' -\psi' + \frac{1}{\tau}\left(\phi-\psi\right)=0.
\end{equation}
%\end{equation}
Note that the characteristic equation for (\ref{ch}) at the positive equilibrium $\phi=1, \psi =1$ is 
\begin{equation}\label{chare}
(\epsilon z^2-z)^2-\frac 1\tau (\epsilon z^2-z)+ \frac{1}{\tau(1+\gamma)}=0.
\end{equation}
If $\tau < (1+\gamma)/4$, it has exactly two positive and two negative simple roots. On the other hand, if $\tau > (1+\gamma)/4$, then it has exactly two complex roots with  positive real parts and two complex roots with  negative real parts. This circumstance explains the necessity of the assumption $\tau \leq (1+\gamma)/4$ for the existence of monotone wavefronts. 

Since we are interested in the positive solutions $(\phi,\psi)$ of (\ref{ch}), we can introduce new variable $\eta$ by $\phi = e^{-\eta}$.  Then (\ref{ch}) can be written as 
 \begin{equation}\label{cs}
\begin{array}{cccc}
\epsilon \psi'  &= & \xi, \\  
\xi' &=& \xi/\epsilon  +\frac 1 \tau (\psi - e^{-\eta}), \\
 \epsilon \eta'& =& \zeta,\\
\zeta' &=&  \zeta/ \epsilon  +  (\zeta/ \epsilon)^2 + \frac{1-\psi}{1+\gamma\psi}. 
\end{array} 
\end{equation}
This system belongs to the class  of monotone cyclic feedback systems (i.e. inequalities (1.10) in \cite{MPSe} are satisfied for (\ref{cs})).  If  $(\phi(t),\psi(t))$  is the wave profile, 
the corresponding solution $\Gamma(t):= (\psi(t),\xi(t), \eta(t), \zeta(t))$ of  (\ref{cs}) is clearly bounded on $\R_+$. Then, in view of studies realized in \cite{El},  we can apply the Main Theorem 
in \cite{MPSm} to conclude that the omega limit set $\omega(\Gamma)$  for $\Gamma(t)$ is either the equilibrium $e:= (1,0,0,0)$ or a nontrivial  periodic orbit
(by  \cite{MPSm},  $\omega(\Gamma)$  cannot contain any orbit homoclinic to $e$ since $\Delta \mbox{det} (-Df(e)) =  -1/(\tau(1+\gamma))$ is negative). This proves the statement of Theorem \ref{T20IN} concerning the asymptotic  shape of the profile $\phi_c(t)$.  
\begin{lem}\label{GST} The positive equilibrium $(1,1)$ of the system 
\begin{equation}\label{Flow}
\phi'(t) = \phi\frac{1-\psi}{1+\gamma\psi}; \quad \psi'(t) = \frac 1 \tau (\phi-\psi),
\end{equation}
is locally exponentially stable and it is also globally stable in the set $\mathcal{Q} =\{ \phi > 0, \psi \geq 0\}$.  The zero equilibrium is a saddle point: the tangent direction  at the origin of the unstable [respectively, stable]  manifold is $( 1+\tau,1)$ [respectively,  $(0,1)$]. Hence, for each fixed pair of parameters $\tau, \gamma$
there exists a unique orbit connecting equilibria $(0,0)$ and $(1,1)$.  Furthermore,  if $\tau < (1+\gamma)/4$ then  the positive equilibrium is a stable node, and all positive semi-orbits, with the  only exception of two trajectories, enter this equilibria in the directions
$$\pm n_1:= \pm\left(1,\frac{1+\gamma}{2\tau}- \sqrt{\frac{(1+\gamma)^2}{4\tau^2}-\frac{1+\gamma}{\tau}}\right).$$
The two above mentioned exceptional trajectories enter $(1,1)$ in the directions 
$$\pm n_2:= \pm\left(1,\frac{1+\gamma}{2\tau}+\sqrt{\frac{(1+\gamma)^2}{4\tau^2}-\frac{1+\gamma}{\tau}}\right).$$
Furthermore, if $\gamma >1, \tau < (1+\gamma)/4,$ then the trajectories of (\ref{Flow}) cannot cross the half-line 
\begin{equation}\label{ln}
\psi := \psi_r(\phi)= [(1+\gamma)/(2\tau)](\phi-1) +1, \ \phi >1,
\end{equation}
from  right  to the left.  

If $\tau > (1+\gamma)/4$ then  the positive equilibrium is a stable focus: in particular,  the heteroclinic solution  spirals into $(1,1)$.  
 \end{lem}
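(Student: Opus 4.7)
The plan is to decompose the claim into a local spectral analysis at the two equilibria, a global Lyapunov argument on the whole quadrant, and two geometric refinements. First I would compute the Jacobians directly. At $(0,0)$ it reads
$\begin{pmatrix}1 & 0\\ 1/\tau & -1/\tau\end{pmatrix}$,
giving the eigenvalues $1,-1/\tau$ with eigenvectors $(1+\tau,1)$ and $(0,1)$, i.e.\ the saddle with the stated tangent directions. At $(1,1)$ it is
$\begin{pmatrix}0 & -1/(1+\gamma)\\ 1/\tau & -1/\tau\end{pmatrix}$,
whose characteristic polynomial $\lambda^2+\lambda/\tau+1/[\tau(1+\gamma)]$ has discriminant $\tau^{-2}(1-4\tau/(1+\gamma))$; the sign of this quantity distinguishes the stable-node case $\tau<(1+\gamma)/4$ (two distinct negative real eigenvalues) from the stable-focus case $\tau>(1+\gamma)/4$ (complex pair with real part $-1/(2\tau)$). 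The relation $v_2=-\lambda(1+\gamma)v_1$ then identifies the eigenvectors: the slower eigenvalue yields the node direction $n_1$ (the tangent of generic approaches), while the faster one yields the exceptional direction $n_2$.

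The key step is a global Lyapunov function. I would propose
\begin{equation*}
V(\phi,\psi)=(1+\gamma)(\phi-1-\ln\phi)+\tau(\psi-1-\ln\psi),
\end{equation*}
smooth on $\{\phi>0,\psi>0\}$, positive definite about $(1,1)$, and proper on this open quadrant. A direct computation in which the bracketed numerator $-(1+\gamma)(\phi-1)\psi+(1+\gamma\psi)(\phi-\psi)$ factors as $(1-\psi)(\phi+\gamma\psi)$ gives
\begin{equation*}
\dot V=-\frac{(\psi-1)^2(\phi+\gamma\psi)}{\psi(1+\gamma\psi)}\leq 0,
\end{equation*}
vanishing exactly on $\{\psi=1\}$. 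Since on that line $\psi'=(\phi-1)/\tau$, the largest invariant subset of $\{\dot V=0\}$ is $\{(1,1)\}$, so LaSalle's invariance principle yields convergence from every initial point of $\{\phi>0,\psi>0\}$. Forward invariance of $\mathcal{Q}$ (at $\psi=0,\phi>0$ one has $\psi'=\phi/\tau>0$, and the axis $\{\phi=0\}$ is invariant) extends the conclusion to all of $\mathcal{Q}$. Uniqueness of the heteroclinic then follows for free: the one-dimensional unstable manifold of the saddle $(0,0)$ is tangent to $(1+\tau,1)$, so the single branch entering $\mathcal{Q}$ must accumulate on $(1,1)$ by global stability.

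It remains to check the transversality on the half-line $\psi=\psi_r(\phi)$ with $m:=(1+\gamma)/(2\tau)$. Setting $L(\phi,\psi):=\psi-\psi_r(\phi)$ and evaluating $\dot L$ along the flow on the line I obtain $\dot L=(\phi-1)B(\phi)$ with
\begin{equation*}
B(\phi)=\frac{1-m}{\tau}+\frac{m^2\phi}{1+\gamma+\gamma m(\phi-1)}.
\end{equation*}
For $\tau<(1+\gamma)/4$ we have $m>2$, whence $B(1)=(2-m)/(2\tau)<0$; moreover $B'(\phi)$ has the sign of $1+\gamma-\gamma m$, which is negative whenever $\gamma\geq 1$ (indeed $\gamma m>2\gamma\geq 1+\gamma$). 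Hence $B$ is strictly decreasing on $[1,\infty)$ and therefore $B<0$ there, so $\dot L\leq 0$ on the half-line for $\phi>1$, which is precisely the required non-crossing statement.

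I expect the main obstacle to be hitting on the correct Lyapunov function: the weight $(1+\gamma)$ in front of $\phi-1-\ln\phi$ is what forces the cross-term in $\dot V$ to collapse and leaves a clean sign. Once $V$ is in place the remaining work is careful but routine bookkeeping; the transversality step is the second most delicate point because it uses both $\gamma\geq 1$ and $\tau<(1+\gamma)/4$ in a non-obvious way to secure the monotonicity of $B$.
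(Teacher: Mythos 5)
Your proof is correct and follows essentially the same route as the paper: linearization at both equilibria, a Goh-type Lyapunov function combined with LaSalle's invariance principle for global stability on the quadrant, and a direct slope comparison along the half-line $\psi=\psi_r(\phi)$ (your condition $\dot L<0$ on that line is algebraically equivalent to the inequality the paper writes down and declares ``easily checked''). The only difference is cosmetic: the paper weights the $\psi$-term as $\tau\int_1^\psi\frac{y-1}{1+\gamma y}\,dy$, which yields $\dot V=-(\psi-1)^2/(1+\gamma\psi)$, whereas your logarithmic weight yields the equally usable $\dot V=-(\psi-1)^2(\phi+\gamma\psi)/\bigl(\psi(1+\gamma\psi)\bigr)$.
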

 Observe that  the exceptional direction $n_2$ is "steeper" than $n_1$ and both of them are "steeper" than the diagonal direction $(1,1)$.  The half-line (\ref{ln}) is located in between the half-lines passing trough the point $(1,1)$ in the directions $n_1$ and $n_2$. 
\begin{proof} We begin by noting that the right-hand side of the system (\ref{Flow}) is $C^\infty$-smooth on $\R^2_+$, where it has at most linear growth with respect to $(\phi, \psi)$. 
In addition,  $\R_+^2$ is positively invariant with the respect to  (\ref{Flow}). Indeed, the semi-axis $\phi =0, \psi \geq 0$ is a union of 
the positive half of the stable manifold of the equilibrium $(0,0)$ with  $(0,0)$.    On the other hand, the vector field on the horizontal semi-axis has inward orientation.  
Therefore  (\ref{Flow})  
defines a smooth semi-flow on $\R_+^2$.
 
The characteristic polynomials at the equilibria $(0,0)$ and $(1,1)$ are, respectively, 
$z^2-(1-\tau^{-1})z-\tau^{-1}$ and $z^2+z\tau^{-1}+ (\tau(1+\gamma))^{-1},$ from which we obtain the above mentioned stability properties of both equilibria.  The statement concerning the directions of the integral curves for (\ref{Flow}) at the equilibrium $(1,1)$ follows from a variant of the  Hartman $C^1-$linearization theorem %established 
 for smooth autonomous systems in a neighborhood of a hyperbolic attractive point, see \cite[p.127]{PH}.    
%Alternatively, it can be obtained by using the asymptotic representations of solutions converging to $(1,1)$, see \cite[Section 7]{FA}. 
The computation of the indicated  directions of tangencies is straightforward and it is omitted here. Similarly, the above mentioned property of $\psi_r$ amounts to the inequality 
$$
\frac{1+\gamma}{2\tau} <  \frac{(\phi-\psi_r(\phi))(1+\gamma\psi_r(\phi))}{\tau\phi(1-\psi_r(\phi))}, \quad \phi >1, 
$$
which can be easily checked.  
 
Next, consider the following Lyapunov function 
$$
V(\phi,\psi)=\int_1^\phi \frac{x-1}{x}dx + \tau \int_1^\psi \frac{y-1}{1+\gamma y}dy, \quad \phi, \psi >0. 
$$
It is easy to see that $V$ vanishes at the positive equilibrium only. Calculating the derivative $ \dot{V}$ of $V$ along the trajectories of (\ref{Flow}), we get 
$$
 \dot{V}=-\frac{(\psi-1)^2}{1+\gamma \psi} \leq 0. 
$$
Since the set $\{(\phi,\psi): \dot{V} =0\}\setminus(1,1) = \{\psi =1,\ \phi \geq 0\}\setminus(1,1)$ does not contain an entire orbit of (\ref{Flow}), 
the positive equilibrium is globally asymptotically stable,  see e.g. \cite[Theorem 2, p. 196]{HiS}.  \qed 
\end{proof}  
\begin{figure}[h]
\centering \fbox{\includegraphics[width=9cm]{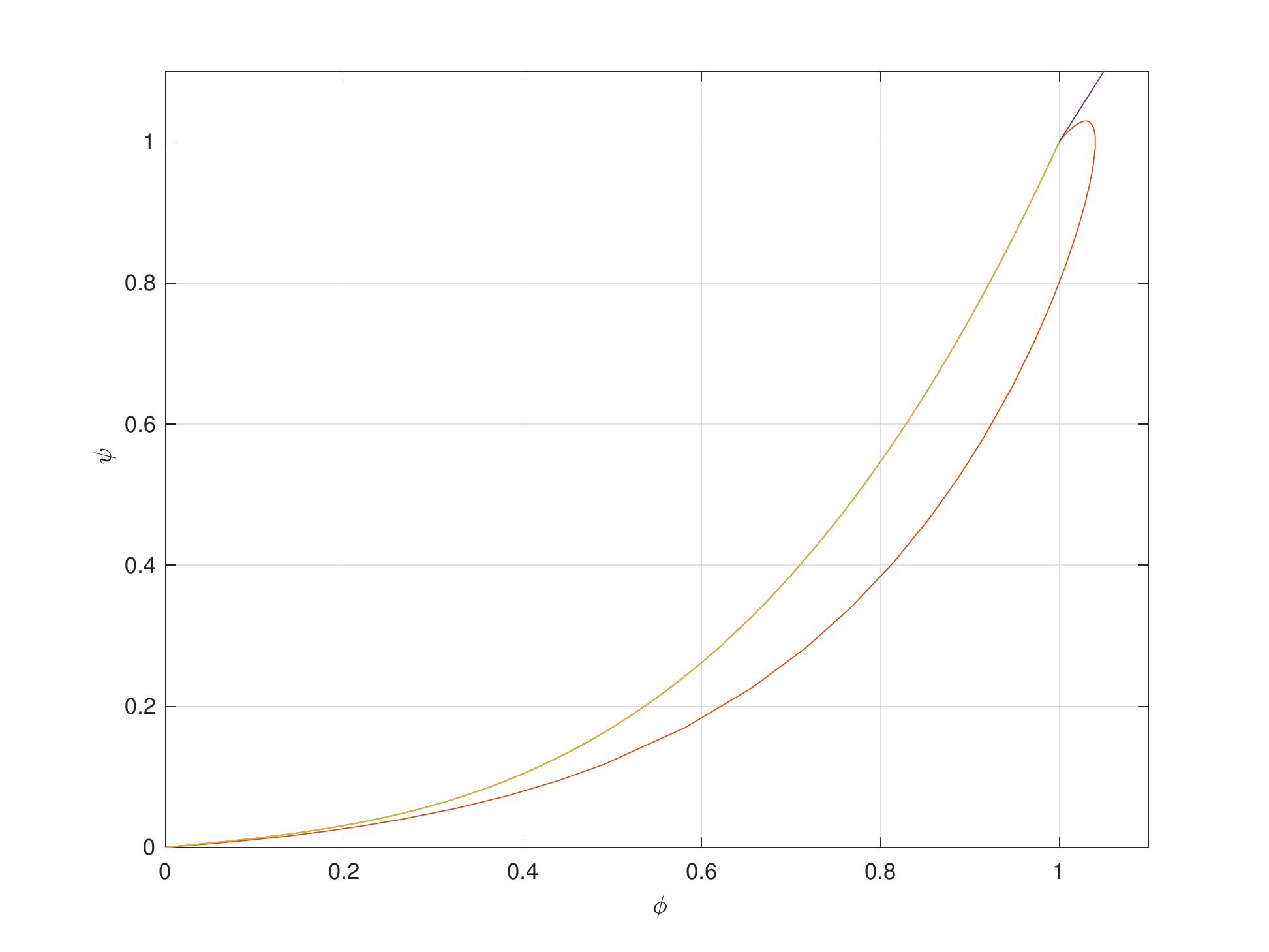}}
\caption{\hspace{0cm} Non-monotone non-oscillating heteroclinic orbit (in red color) and the bounding curves $\psi(\phi), \psi_r(\phi)$ for $\tau=10, \ \gamma = 40$.}
\end{figure}
\begin{lem}\label{NM} Assume that  $\tau \leq  (\gamma+1)/4$ and $\gamma >1$. Suppose that there exists a smooth monotone function  $\psi= \psi(\phi), \ \phi \in [0,1]$ such that 
$\psi(0)=0, \ \psi(1) =1$ and 
\begin{equation}\label{fin}
\psi'(0) > \frac{1}{1+\tau}; \quad \psi'(1) > \frac{1+\gamma}{2\tau}+ \sqrt{\frac{(1+\gamma)^2}{4\tau^2}-\frac{1+\gamma}{\tau}},  
\end{equation}
 \begin{equation}\label{psT}
 \psi'(\phi) > \frac{(\phi-\psi(\phi))(1+\gamma\psi(\phi))}{\tau\phi(1-\psi(\phi))}, \quad \phi \in (0,1).
\end{equation}
Then each component of the heteroclinic
solution  $(\phi(t), \psi(t))$ to the system (\ref{Flow}) is a non-monotone and non-oscillating function  with  exactly one critical point, where the global maximum (bigger than $1$) is reached, see Figure 4.  
 \end{lem}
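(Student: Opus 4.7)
\noindent\emph{Proof proposal.} The plan is to use the graph $\{\psi=\psi(\phi),\,\phi\in[0,1]\}$ as a barrier that traps the heteroclinic from $(0,0)$ to $(1,1)$ strictly beneath it, and then to show that the presence of this barrier, combined with the endpoint condition \eqref{fin}, is incompatible with a monotone approach to $(1,1)$ from the lower-left quadrant. The resulting overshoot through the vertical line $\phi=1$ is then shaped by the second invariance of Lemma \ref{GST}, namely that the half-line $\psi=\psi_r(\phi)$ cannot be crossed from right to left.

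First, Lemma \ref{GST} tells us that the unique heteroclinic leaves $(0,0)$ tangent to $(1+\tau,1)$, i.e.\ with slope $1/(1+\tau)$, which by the first inequality of \eqref{fin} is strictly smaller than $\psi'(0)$. Hence the orbit lies strictly below $\psi=\psi(\phi)$ for $\phi$ just positive. Next, for $h(t):=\psi(\phi(t))-\psi(t)$ a direct computation using \eqref{Flow} shows that at any first instant where $h(t)=0$ one has
$$ h'(t)\;=\;\psi'(\phi(t))\,\phi(t)\,\frac{1-\psi(t)}{1+\gamma\psi(t)}\;-\;\frac{\phi(t)-\psi(t)}{\tau}\;>\;0, $$
the positivity being exactly the content of \eqref{psT}. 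Therefore $\{h>0\}$ is positively invariant as long as $\phi(t)\in(0,1)$.

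The crucial step is to rule out the alternative that $\phi(t)<1$ for all $t$. Under that hypothesis, the Hartman linearization at the attracting node (used in Lemma \ref{GST}) forces the orbit to enter $(1,1)$ from the lower-left tangent to one of the eigendirections $n_1$ or $n_2$. The second inequality of \eqref{fin} asserts that $\psi'(1)$ strictly exceeds the slope of $n_2$ (hence also of $n_1$), so a first-order expansion of both $\psi(\phi(t))$ and $\psi(t)$ at $(1,1)$ yields $h(t)<0$ for $t$ close to $+\infty$, contradicting the invariance of $\{h>0\}$ established above. Consequently there is a finite $t_1$ with $\phi(t_1)=1$ and $\psi(t_1)<1$.

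From $t_1$ onward the geometry proceeds in three stages. In $\{\phi>1,\,\psi<1\}$ both coordinates strictly grow, so $\phi$ attains its unique maximum $\phi_{\max}>1$ at the time $t_2>t_1$ where $\psi$ first equals $1$. Since $\psi_r(\phi_{\max})>1$, the point $(\phi_{\max},1)$ lies strictly below $\psi_r$, and the second invariance of Lemma \ref{GST} confines the orbit to $\{\psi<\psi_r(\phi)\}$ while $\phi>1$. This excludes the steeper eigendirection $n_2$ and forces the orbit to enter $(1,1)$ tangent to $n_1$ from the quadrant $\{\phi>1,\psi>1\}$; in particular $\phi(t)>1$ for all $t>t_1$, $\psi(t)>1$ for all $t>t_2$, and $\psi$ has a unique maximum at the single time $t_3>t_2$ with $\phi(t_3)=\psi(t_3)$, after which both components decrease monotonically toward $1$ along the nodal direction $n_1$. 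The main obstacle is the third step, since it converts the differential-geometric invariance of $\{h>0\}$ into an asymptotic obstruction at the hyperbolic limit $(1,1)$; it is precisely the second inequality of \eqref{fin} that simultaneously forbids every admissible tangent direction of a hypothetical monotone heteroclinic.
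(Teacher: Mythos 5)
Your proposal is correct and follows essentially the same route as the paper: the inequality \eqref{psT} makes the arc $\psi=\psi(\phi)$ a one-sided barrier, the first inequality in \eqref{fin} places the unstable manifold of the origin beneath it, the second inequality in \eqref{fin} rules out a monotone approach to $(1,1)$ tangent to $n_1$ or $n_2$ from the region $\phi<1$, and the half-line $\psi_r$ from Lemma \ref{GST} controls the overshoot and forces entry along $n_1$. Your write-up is in fact more explicit than the paper's (the $h(t)$ computation and the first-order expansion at $(1,1)$ are exactly what the paper leaves implicit), so no changes are needed.
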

 \begin{proof} Indeed, inequality (\ref{psT}) implies that the positive orbits of system (\ref{Flow}) starting below the arc $\psi= \psi(\phi), \ \phi \in [0,1],$ cannot cross it in the direction from  right to  left .  The first inequality in (\ref{fin}) also shows that the unstable manifold of the zero equilibrium  in the first quarter lies below the graph of $\psi(\phi)$.  Then the second inequality in (\ref{fin}) as well as the properties of the half-line $\psi_r(\phi)$  in Lemma \ref{GST} oblige the heteroclinic trajectory to approach the equilibrium $(1,1)$  in the direction $-n_1$. 
 The existence of exactly one critical point for each component of the heteroclinic connection follows immediately from the elementary analysis of the vector field near $(1,1)$. \qed
 \end{proof}
 The simplest candidate for the test function $\psi(\phi)$ in Lemma \ref{NM} is the polynomial $\psi = a\phi + b\phi^n,$ where  $b = 1-a$ and  $n \in \N$. %Actually, after some easy computations one can  find that $n$ must be bigger or equal to 3.  
 %In particular,  
 Taking $n=3$ %, after some easy calculation 
  we obtain the following
 \begin{cor} \label{Coro} The conclusion of Lemma \ref{NM} holds  true whenever we can find  positive $a, b =1-a$ such that 
 \begin{equation}\label{i2a}
 \frac{1}{\tau+1} < a < 1.5 -  \frac{1+\gamma}{4\tau}- \sqrt{\frac{(1+\gamma)^2}{16\tau^2}-\frac{1+\gamma}{4\tau}},
\end{equation}
 \begin{equation}\label{inok}
\tau(a+3bx^2)(1+bx+bx^2)>b(1+x)(1+\gamma(ax+bx^3)), \quad x \in (0,1). 
\end{equation}
 \end{cor}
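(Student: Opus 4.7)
The plan is to specialize Lemma \ref{NM} to the cubic test function
$$
\psi(\phi) = a\phi + b\phi^3, \qquad b := 1-a,
$$
and translate each hypothesis into a condition on $(a,\gamma,\tau)$. I would first observe the trivial facts $\psi(0)=0$, $\psi(1)=a+b=1$, and $\psi'(\phi)=a+3b\phi^2>0$ on $[0,1]$ (since $a,b>0$), which gives the required smoothness and monotonicity in Lemma \ref{NM}.

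Next I would compute the boundary slopes $\psi'(0)=a$ and $\psi'(1)=a+3b=3-2a$ and check that the two inequalities in (\ref{fin}) become, respectively,
$$
a > \frac{1}{1+\tau}, \qquad 3-2a > \frac{1+\gamma}{2\tau} + \sqrt{\frac{(1+\gamma)^2}{4\tau^2} - \frac{1+\gamma}{\tau}}.
$$
Solving the second for $a$ and using $\sqrt{A}/2 = \sqrt{A/4}$ yields the upper bound in (\ref{i2a}); the lower bound is precisely the first inequality. Hence (\ref{i2a}) is exactly the translation of (\ref{fin}) under the cubic ansatz.

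The remaining verification is the differential inequality (\ref{psT}). Using $a+b=1$, I would factor
$$
1-\psi(\phi) = a(1-\phi)+b(1-\phi^3) = (1-\phi)\bigl(1 + b\phi + b\phi^2\bigr),
$$
$$
\phi - \psi(\phi) = (1-a)\phi - b\phi^3 = b\phi(1-\phi)(1+\phi),
$$
so that after multiplying (\ref{psT}) by $\tau\phi(1-\psi)>0$ and cancelling the common positive factor $\phi(1-\phi)$ on $(0,1)$, the inequality reduces exactly to
$$
\tau(a+3b\phi^2)(1+b\phi+b\phi^2) > b(1+\phi)\bigl(1+\gamma(a\phi+b\phi^3)\bigr), \quad \phi\in(0,1),
$$
which, with $x=\phi$, is (\ref{inok}).

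Thus (\ref{i2a})--(\ref{inok}) encode precisely the three hypotheses of Lemma \ref{NM} for the cubic test function, and the corollary follows at once. There is no real obstacle here: the only step requiring care is the algebraic simplification of $1-\psi$ and $\phi-\psi$ using $a+b=1$ so that the common factor $(1-\phi)$ cancels and (\ref{psT}) becomes a polynomial inequality on $[0,1]$. \qed
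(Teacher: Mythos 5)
Your proposal is correct and coincides with the paper's (essentially unwritten) argument: the corollary is obtained precisely by substituting the cubic test function $\psi=a\phi+b\phi^3$, $b=1-a$, into Lemma \ref{NM}, so that (\ref{fin}) becomes (\ref{i2a}) via $\psi'(0)=a$, $\psi'(1)=3-2a$, and (\ref{psT}) reduces to (\ref{inok}) after the factorizations $1-\psi=(1-\phi)(1+b\phi+b\phi^2)$ and $\phi-\psi=b\phi(1-\phi)(1+\phi)$ and cancellation of $\phi(1-\phi)>0$. Your write-up simply supplies the algebraic details the paper omits.
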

%Importantly, 
Note that the latter inequality can be rewritten as  $P_4(x) >0$, where  $P_4$ is a  real polynomial of order 4.  Since all zeros and critical points 
 of $P_4$ can be calculated explicitly,   inequality  (\ref{inok})  admits a rigorous verification for each fixed set of parameters $a,\tau, \gamma$. 
 
 \begin{example}  For  the parameters $\tau =10, \gamma =40$, numerical simulations in \cite{TPT} suggest the existence of non-monotone and non-oscillating wavefront propagating with speed $c=2$. This numerical result is in  good agreement with Theorem \ref{T20IN}. Indeed, if we take  $\tau=10, \ \gamma = 40$,  then Corollary \ref{Coro} applies with $a=0.12$, see Figure 1.   In fact, for $\gamma =40$, the numerical $\tau-$interval for the existence of non-monotone non-oscillating heteroclinics for (\ref{Flow}) is $(\tau_\#(40), 10.25) := (8.7\dots, 10.25)$ while a shorter interval 
 $(\tau_*(40), 10.25) : = (9.4\dots, 10.25)$  is provided by Corollary \ref{Coro}. 
 \end{example}
 \begin{cor} \label{CoroG} For each $\gamma > 7.3$ there exists $1< \tau_0(\gamma) < (\gamma +1)/4=:\tau_1(\gamma)$ such that the unique heteroclinic connection for the system (\ref{Flow}) is  non-monotone and non-oscillating  for every $\tau \in (\tau_0(\gamma), \tau_1(\gamma)$). 
 \end{cor}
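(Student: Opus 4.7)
The plan is to apply Corollary \ref{Coro} with parameters close to the limiting choice $\tau = \tau_1(\gamma) := (1+\gamma)/4$, $a = b = 1/2$. A direct expansion of the polynomial in (\ref{inok}) at this reference point yields the factorization
\begin{equation*}
16\,P_4(x) \;=\; -(1-x)\,h(x), \qquad h(x) := (3-\gamma)x^3 + (6-2\gamma)x^2 + (13+\gamma)x + (6-2\gamma),
\end{equation*}
so verifying (\ref{inok}) on $(0,1)$ reduces to proving the sign condition $h(x) < 0$ on $[0,1]$.

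I would then analyze $h$. One checks $h(0) = 6-2\gamma$ and $h(1) = 28-4\gamma$, both negative for $\gamma > 7$; further $h''(x) = 6(3-\gamma)x + 2(6-2\gamma)$ is negative on $[0,+\infty)$ for $\gamma > 3$, so $h'$ is strictly decreasing. Since $h'(0) = 13+\gamma > 0$ and $h'(1) = 34-6\gamma < 0$ for $\gamma > 17/3$, there is a unique critical point $x_* \in (0,1)$ at which $h$ attains its maximum on $[0,1]$. Using $h'(x_*) = 0$ to eliminate $13+\gamma$ yields the clean identity
\begin{equation*}
h(x_*) \;=\; 2(\gamma - 3)\bigl(x_*^3 + x_*^2 - 1\bigr),
\end{equation*}
so $h(x_*) < 0$ iff $x_* < x_c$, where $x_c \approx 0.7549$ is the unique positive root of $x^3+x^2=1$. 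Since the relation $x_*(3x_*+4) = (13+\gamma)/(\gamma-3)$ shows that $\gamma \mapsto x_*(\gamma)$ is strictly decreasing on $(17/3,+\infty)$, there is a unique $\gamma_c$ with $x_*(\gamma_c) = x_c$; solving yields $\gamma_c \approx 7.29$, and for every $\gamma > 7.3 > \gamma_c$ we obtain $h < 0$ on $[0,1]$, whence $P_4(x) > 0$ on $[0,1)$ with $P_4(1) = 0$.

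Finally, to make both inequalities in Corollary \ref{Coro} strict, I would perturb off the limiting point. Set $a := 1/2 - \delta$ with $\delta > 0$ small; a short calculation gives
\begin{equation*}
P_4(1;\,\tau_1,\,1/2-\delta) \;=\; \tau_1(2+2\delta)^2 - (1+2\delta)(1+\gamma) \;=\; (1+\gamma)\delta^2 \;>\; 0.
\end{equation*}
By joint continuity of $P_4$ in $(x,\tau,a)$, combined with the strict positivity of $P_4$ on $[0,1-\varepsilon]$ at the limiting point, one can choose $\delta_0 > 0$ and $\eta > 0$ with $\eta \ll \delta_0^2$ such that $P_4(x;\,\tau,\,1/2-\delta_0) > 0$ on $[0,1]$ for every $\tau \in (\tau_1(\gamma)-\eta,\tau_1(\gamma))$. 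Condition (\ref{i2a}) at such $\tau$ requires $4/(\gamma+5) < a < 1/2 - O(\sqrt{\tau_1-\tau})$, and the scaling $\eta \ll \delta_0^2$ makes the upper bound exceed $1/2 - \delta_0$, while the lower bound lies below $1/2-\delta_0$ since $\gamma > 3$ forces $4/(\gamma+5) < 1/2$. Thus Corollary \ref{Coro} applies and the heteroclinic is non-monotone and non-oscillating. Setting $\tau_0(\gamma) := \tau_1(\gamma) - \eta$ and using $\tau_1(\gamma) > 2$ for $\gamma > 7.3$ gives $\tau_0(\gamma) > 1$, completing the argument.

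The main obstacle is the quantitative step of confirming $\gamma_c < 7.3$. The problem reduces to the coupled system $(\gamma-3)x(3x+4) = 13+\gamma$, $x^3+x^2 = 1$, but $x_c$ admits no elementary closed form, so the bound $\gamma_c \approx 7.29$ is most transparently established by numerical evaluation (alternatively via the Cardano resolvent of the cubic).
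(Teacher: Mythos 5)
Your computation is essentially the paper's own proof in different clothing: the reference point $a=1/2$, $\tau=(1+\gamma)/4$ is exactly the paper's $\epsilon=0$ limit, your cubic $h(x)$ is precisely the second factor of the paper's polynomial $Q_0(x)=(x-1)\bigl((3-\gamma)(x^3+2x^2+2)+(\gamma+13)x\bigr)$, and your criterion $x_*<x_c$ with $x_c^3+x_c^2=1$ is algebraically equivalent to the paper's condition $(\gamma+13)/(\gamma-3)<\min_{(0,1)}\{x^2+2x+2/x\}=4.729\dots$, so both land on the same threshold $\gamma_c\approx 7.29$. The reliance on a numerical evaluation of $x_c$ at the end is no worse than what the paper does.

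There is, however, one genuine gap in your final gluing step. You conclude $P_4(\cdot;\tau,1/2-\delta_0)>0$ on all of $[0,1]$ from (i) strict positivity of the reference polynomial on $[0,1-\varepsilon]$, (ii) the endpoint value $P_4(1;\tau_1,1/2-\delta)=(1+\gamma)\delta^2>0$, and (iii) ``joint continuity''. This inference is not valid as stated: since $P_4(\cdot;\tau_1,1/2)$ vanishes at $x=1$, continuity gives no uniform lower bound on a left neighbourhood of $x=1$, and a perturbation of size $\delta$ whose endpoint value is only $O(\delta^2)$ can in general dip below zero on an intermediate interval such as $x\in(1-\sqrt{\delta},1)$ (consider $F_\delta(x)=(1-x)^3-C\delta(1-x)+\delta^2$ with $C$ large). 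What saves the argument here, and what the paper actually verifies, is that $x=1$ is a \emph{simple} zero of the reference polynomial with $Q_0'(1)=4(7-\gamma)<0$; hence $Q_\epsilon'<0$ on some $[x_0,1]$ uniformly for small $\epsilon$, so $Q_\epsilon(x)=Q_\epsilon(1)-\int_x^1 Q_\epsilon'(s)\,ds\ge Q_\epsilon(1)>0$ there. You need to add this derivative control near $x=1$ (equivalently, note that the first-order term of the perturbation vanishes at $x=1$ and that the slope stays bounded away from zero); with that supplement your argument is complete.
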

 \begin{proof} For a fixed $\gamma >3$ and positive parameter $\epsilon$, we %will 
 take 
 $$
 a= \frac 1 2 - \epsilon, \quad \frac{\tau}{1+\gamma} =\frac 1 4 - \epsilon^3.
 $$
 Suppose that $\epsilon>0$ is  small enough to assure the inequalities
 $$
 \tau = (1+\gamma)\left(\frac 1 4 - \epsilon^3\right) >\frac{1+2\epsilon}{1-2\epsilon}, \quad \frac 1 4 - \epsilon^3 > \max\left(\frac{1+2\epsilon^{3/2}}{4(1+\epsilon)}, \frac{1+2\epsilon}{4(1+\epsilon)^2}\right) = \frac 1 4 - \epsilon^2+ \dots. 
 $$
 Then it is easy to see that (\ref{i2a}) is satisfied for all small $\epsilon>0$  as well as the inequality (\ref{inok}) at the end points $x=0,1$. In order to analyze (\ref{inok}) for 
 $x \in (0,1)$, it is convenient to rewrite it in the  finite form 
 $
Q_\epsilon(x)= \sum_{j}\epsilon^j Q_j(x)  \geq 0,   
 $
 where $Q_j(x)$ are certain polynomials, which can be easily calculated. In particular, 
 $$
 Q_0(x) = (x-1)\left((3-\gamma)(x^3+2x^2+2)+(\gamma+13)x\right) >0, \ x \in (0,1), 
  $$
 whenever 
 $$
 \frac{\gamma+13}{\gamma-3} < \min_{x \in (0,1)} \left\{x^2+2x+\frac 2x\right\} = 4.729\dots \quad \mbox{or, equivalently,}\ \gamma > 7.2907 \dots 
 $$
 Observe that $Q_0(1) =0$ and therefore an additional analysis is required to prove the positivity of  $Q_\epsilon(x)$ on $(0,1)$.  We have that  $Q_\epsilon(1)>0$ {for all sufficiently small $\epsilon>0$}  and  $Q_0'(1) =4(7-\gamma) <0$. The latter implies that  $Q_0'(x) <0$ for all $x$ from some small interval  $[x_0,1]$, $x_0<1$.  Therefore
  $Q_\epsilon'(x) <0$
 for all sufficiently small $\epsilon >0$ (say, for $\epsilon\in (0,\epsilon_0]$) and  $x \in [x_0,1]$. Hence,  $ Q_\epsilon(x) = Q_\epsilon(1) - \int^1_xQ'_\epsilon(s)ds >0$ 
 for all $\epsilon\in (0,\epsilon_0]$, $x \in [x_0,1]$. Finally, since  
 $Q_0(x) >0$ for  $x \in [0,x_0]$, we conclude that  there exists $\epsilon_1 \in (0,\epsilon_0)$ such that 
  $Q_\epsilon(x) >0$ for all $\epsilon\in (0,\epsilon_1]$, $x \in [0,1]$. This proves (\ref{inok}) and therefore  Corollary \ref{CoroG} follows from Corollary \ref{Coro}. \qed 
 \end{proof}
 
 \begin{remark} It is easy to see that if the assumptions of Corollary \ref{Coro} are satisfied for some triple of parameters $\gamma', \tau', a'$, with 
 $\tau' < (1+\gamma')/4$ then they will be satisfied for all triples $\gamma', \tau, a'$, with the same $\gamma', a'$ and $\tau \in (\tau', (1+\gamma')/4)$. Therefore, for a fixed $\gamma$, 
 the set of all $\tau$ satisfying the assumptions of  Corollary  \ref{Coro} with some adequate $a,$ is a connected interval, say $(\tau_*(\gamma), (1+\gamma)/4$). Similarly,  these assumptions 
 will be satisfied for all $\gamma, \tau', a'$ such that $4\tau'-1 < \gamma <\gamma'$.  In view of Corollary \ref{CoroG}, all this means that $\tau_*(\gamma)$ is a non-decreasing function defined on the maximal interval $(7.29\dots, +\infty)$. 
  \end{remark}
  
\begin{lem}\label{NMS} For each $\gamma >1$ there exists $\tau_\#(\gamma) \leq (1+\gamma)/4$ such that each component of the heteroclinic
solution  $(\phi(t,\tau, \gamma), \psi(t,\tau, \gamma))$ to system (\ref{Flow}) is a non-monotone and non-oscillating function if and only if $\tau \in (\tau_\#(\gamma),(1+\gamma)/4]$. 
The maximal value of the profile $\phi(t,\tau, \gamma)$ increases as $\tau $  increases (for a fixed $\gamma$) or $\gamma$ decreases (for a fixed $\tau$). Furthermore, 
$\tau_\#(\gamma)$ is a  non-decreasing right-continuous function, and $\tau_\#(\gamma)\leq \tau_*(\gamma) < (1+\gamma)/4$ for all $\gamma > 7.29\dots$  See Figure 2 where the graph of $\tau = \tau_\#(\gamma),\ \gamma \in (7.29\dots, 40]$  is calculated  numerically. 
 \end{lem}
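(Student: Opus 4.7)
The plan is to reduce the statement to a monotonicity study of the maximum $M(\tau,\gamma):=\max_t\phi(t;\tau,\gamma)$ of the $\phi$-component of the heteroclinic. Since $\tau\le(1+\gamma)/4$ places us in the stable-node regime of Lemma~\ref{GST}, oscillation around $(1,1)$ is ruled out, and inspection of the vector field of (\ref{Flow}) shows that the heteroclinic either stays in $\{\psi<1\}$ throughout (monotone case, $M=1$) or crosses $\psi=1$ exactly once at a point with $\phi_0>1$. In the latter case $\phi'=0$ precisely on $\psi=1$, so $\phi$ attains its unique maximum $M=\phi_0>1$ at the crossing; thereafter $\phi$ decreases while $\psi$ continues to grow until the orbit hits the diagonal at some $(P,P)$ with $P>1$ (where $\psi$ attains its unique maximum), after which both components monotonically relax to $(1,1)$ along the nodal direction $-n_1$. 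Thus both components are non-monotone and non-oscillating precisely when $M>1$.

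\textbf{Monotonicity in $\tau$ and $\gamma$.} The unstable manifold of the saddle $(0,0)$ leaves in direction $(1+\tau,1)$, so near the origin it admits the graph representation $\psi=\Psi(\phi;\tau,\gamma)$, which persists until $\phi$ attains its maximum. Along it,
\[
\frac{d\Psi}{d\phi}=F(\Psi,\phi;\tau,\gamma):=\frac{(\phi-\Psi)(1+\gamma\Psi)}{\tau\phi(1-\Psi)},\qquad \Psi(0)=0,\quad \Psi'(0)=\frac{1}{1+\tau}.
\]
On the region $\{0<\Psi<\min(\phi,1)\}$, $F$ is strictly decreasing in $\tau$ and strictly increasing in $\gamma$. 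A standard contradictory-slope comparison (if the two unstable-manifold arcs for parameters $(\tau_1,\gamma)$ and $(\tau_2,\gamma)$ with $\tau_1<\tau_2$ ever touched at an interior point with $\Psi<1$, the slope inequality there would contradict the strict ordering dictated by $F$) yields $\Psi(\phi;\tau_1,\gamma)>\Psi(\phi;\tau_2,\gamma)$ for every admissible $\phi>0$; analogously $\Psi(\phi;\tau,\gamma_1)<\Psi(\phi;\tau,\gamma_2)$ when $\gamma_1<\gamma_2$. Because $M$ is characterised by $\Psi(M)=1$ (and equals $1$ when $\Psi$ never reaches $1$), this translates immediately into $M$ being nondecreasing in $\tau$ and nonincreasing in $\gamma$, which is the monotonicity statement of the lemma.

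\textbf{Definition of $\tau_\#$, openness, right-continuity, and upper bound.} Set $S(\gamma):=\{\tau\in(0,(1+\gamma)/4]:M(\tau,\gamma)>1\}$ and $\tau_\#(\gamma):=\inf S(\gamma)$ (with $\inf\varnothing=(1+\gamma)/4$). Continuous dependence of the unstable manifold of the hyperbolic saddle $(0,0)$ on parameters, together with the transverse character of the crossing $\psi=1$ when $M>1$, makes $S(\gamma)$ open in $\tau$; combined with the $\tau$-monotonicity from the previous step, $S(\gamma)$ is necessarily an interval $(\tau_\#(\gamma),(1+\gamma)/4]$ and $M(\tau_\#(\gamma),\gamma)=1$, which is exactly the claimed iff. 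Monotonicity in $\gamma$ then gives that $\tau_\#$ is non-decreasing. Right-continuity follows by picking, for any $\gamma_n\searrow\gamma_0$, some $\tau'\in(\tau_\#(\gamma_0),(1+\gamma_0)/4)$: since $M(\tau',\gamma_0)>1$, continuous dependence in $\gamma$ gives $M(\tau',\gamma_n)>1$ for large $n$, hence $\tau_\#(\gamma_n)\le\tau'$. Finally, the bound $\tau_\#(\gamma)\le\tau_*(\gamma)<(1+\gamma)/4$ for $\gamma>7.29\ldots$ is immediate from Corollary~\ref{Coro}, Corollary~\ref{CoroG} and the Remark following them, since every $\tau\in(\tau_*(\gamma),(1+\gamma)/4)$ already produces a non-monotone non-oscillating heteroclinic and therefore lies in $S(\gamma)$.

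\textbf{Main obstacle.} The only genuinely delicate point is controlling the $\Psi$-chart near $\Psi=1$: the coefficient $1/(1-\Psi)$ in $F$ diverges, reflecting the fact that the orbit crosses $\psi=1$ with a vertical tangent in the $(\phi,\psi)$-plane. The clean remedy is either to swap the roles of $\phi$ and $\psi$ in a neighbourhood of the crossing (the inverse ODE $d\Phi/d\psi=\tau\phi(1-\psi)/[(\phi-\psi)(1+\gamma\psi)]$ is regular as long as $\phi\neq\psi$), or to argue directly with the time-parametrized orbit and invoke standard continuous dependence of ODE flows on parameters over compact time windows, patched with the stable-manifold theorem at $(1,1)$. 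Everything else is straightforward bookkeeping built on the comparison principle for the $\Psi$-equation and the structural facts already contained in Lemma~\ref{GST} and Corollary~\ref{Coro}.
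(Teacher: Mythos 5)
Your proposal is correct and follows essentially the same route as the paper: both rest on the monotone dependence of the slope field $\frac{(\phi-\psi)(1+\gamma\psi)}{\tau\phi(1-\psi)}$ on $\tau$ and $\gamma$, used as a comparison/barrier argument for the graph of the heteroclinic (the paper packages this by feeding the heteroclinic at $(\tau',\gamma')$ back into Lemma~\ref{NM} as the test curve, while you run the ODE comparison for the unstable manifolds directly), together with continuous dependence of the heteroclinic on parameters for the right-continuity of $\tau_\#$ (which the paper phrases as a contradiction with a limit of monotone heteroclinics) and Corollaries~\ref{Coro}--\ref{CoroG} for the bound $\tau_\#\le\tau_*$. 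The only point deserving the same care in both versions is the $\gamma$-comparison near $\phi=0$, where the two arcs share the initial slope $1/(1+\tau)$ and strictness must be propagated from the strict ordering of the slope fields for $\phi>0$.
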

 \begin{proof} Suppose that   (\ref{Flow}) has a non-monotone and non-oscillating heteroclinic for some $\tau', \gamma'$.  Let $\psi=\psi(\phi)$ be the representation for
 this heteroclinic on the maximal open interval for $\phi\in (0,\phi_0), \ \phi_0 >1$. Here $\phi_0:= \max\{\phi(t), \ t  \in \R\}$. Then clearly (\ref{psT}) and the first inequality in (\ref{fin}) hold true on $(0,\phi_0)$ for each  $\gamma < \gamma', \tau=\tau'$ or $\tau > \tau',$ $\gamma= \gamma'$. 
Observe that the second inequality in  (\ref{fin})  does not matter since $\psi(1) <1$. This implies the existence and monotonicity of $\tau_\#(\gamma)$ with the above mentioned properties. 
Finally, suppose for a moment that $\tau_\#$ is not right-continuous at some point $\gamma_0$. Then $\tau_\#(\gamma_0^+) > \tau_\#(\gamma_0)$ so that 
for each fixed $\tau \in (\tau_\#(\gamma_0),\tau_\#(\gamma^+_0))$ and $\gamma_n=\gamma_0+1/n$  (\ref{Flow}) has a monotone  heteroclinic $\psi_n(\phi)$.  But then the limit 
function $\lim_{n\to +\infty}\psi_n(\phi)$ gives a monotone heteroclinic connection for the parameters $\gamma_0, \tau > \tau_\#(\gamma_0)$, a contradiction. \qed 
\end{proof}

Now we can complete the proof of Theorem \ref{T20IN}. To establish the existence of wavefronts for (\ref{ch}), it suffices to check that the right-hand side of system (\ref{Flow}) meets the hypotheses  (H1)-(H4) of Theorem \ref{TF} from  Appendix. (H1), (H2)  are obviously verified {with $\kappa =(1,1)$.} (H3) and (H4) follow from  Lemma \ref{GST}. Now, the general oscillatory and  eventual monotonicity  properties of wavefront profiles follow from the related properties of roots to the characteristic equation (\ref{chare}), see the above discussion. Finally, 
take some $\tau \in  (\tau_\#(\gamma),(1+\gamma)/4]$ for $\gamma > 7.29\dots$. From Corollary \ref{CoroG} and Lemma \ref{NMS} we know that such a pair of $\tau, \gamma$ exists and the associated heteroclinic connection $(\phi_0(t),\psi_0(t))$ of (\ref{Flow}) has non-monotone components. Then Theorem \ref{TF} implies the existence of wavefronts 
with profiles $\phi_0(t,c)$ for all sufficiently large propagation speeds $c$. Since $\phi_0(t,c) \to \phi_0(t)$ uniformly on $\R$ as $c\to +\infty$, these profiles $\phi_0(t,c)$ are non-monotone. However, since $\tau < (1+\gamma)/4$, they also are not oscillating around the level $1$. \qed  

\subsection{Food-limited model with a discrete delay: proof of Theorem \ref{T30IN}}
In this subsection, following \cite{GKL,GG,OW,SY,TPT,WL}, we consider the   diffusive version of the food-limited model with a discrete delay (\ref{i1dIN}). 
Again, looking for wavefronts in the form 
$$
u(t,x) = \phi(\sqrt{\epsilon} x + t), \quad \epsilon = c^{-2}, 
$$
we obtain the profile equation 
\begin{equation}\label{def}
\epsilon \phi''(t) -\phi'(t)+ \phi(t)\left(\frac{1-\phi(t-\tau)}{1+\gamma \phi(t-\tau)}\right)=0. 
\end{equation}
Equation (\ref{def}) with $\gamma =0$ was analyzed  in \cite{BS,P1,ADN,fhw,GT,GTLMS,HTa,HTb,ST,wz}.  The uniqueness of each positive semi-wavefront to equation (\ref{def}) with $\gamma =0$ was proved in \cite{ST}.  Remarkably, the approach of \cite{ST} can be also  applied for $\gamma >0$ since the functional 
$F:C([-\tau,0]) \to \R$ defined as $F(\phi) := \phi(0)(1-\phi(-\tau))/(1+\gamma\phi(-\tau))$ has the following monotonicity property 
$$
F(\psi) - F(\phi) \leq F'(0)(\psi -\phi), \ \mbox{for} \ \phi, \psi \in C([-\tau,0]) \ \mbox{such that} \ 0 \leq \phi(s) \leq \psi(s), \ s \in [-\tau,0]. 
$$
Thus  the uniqueness (up to translation) of each semi-wavefront follows from \cite[Theorem 1]{ST} (see also \cite[Corollary 2]{ST} for computation details  when $\gamma =0$).

Similarly, when $\gamma =0$, it was proved in 
\cite[Section 3]{HTa} that each  positive solution  $\phi(t)$ of (\ref{def}) satisfying $v(-\infty)=0$ is either eventually monotone or it is sine-like slowly oscillating around $1$ at $+\infty$.  
It is easy to see that, due to the strict monotonicity of the function $G(u) = (1-u)/(1+\gamma u)$ on $\R_+$,  the proof given in \cite[Section 3]{HTa}  is also valid for the case $\gamma >0$ if we consider an additional possibility  when the solution  $\phi(t)$ slowly oscillates around $1$ on a finite interval and then converges monotonically to $1$. 

Next, the limit form of (\ref{def}) with $\epsilon =0$  is 
\begin{equation}\label{defs}
\phi'(t)= \phi(t)\left(\frac{1-\phi(t-\tau)}{1+\gamma \phi(t-\tau)}\right). 
\end{equation}
It is immediate to see that the hypotheses (H1), (H2), (H4)  of  Theorem \ref{TF} from  Appendix are satisfied with $\kappa=1$ for the equation (\ref{defs}). For $\tau < 1.5(1+\gamma)$, 
the assumption (H3) is also satisfied in view of \cite[Example 1.4]{lprtt}. Hence, by   Theorem \ref{TF},  equation (\ref{defs}) has a positive heteroclinic connection $\phi= u^*(t)$ and 
 equation (\ref{def}) has a positive heteroclinic connection $\phi(t,\epsilon)$ for all small $\epsilon >0$. Moreover, when $\epsilon \to 0^+$ it holds  that $\phi(t,\epsilon) \to u^*(t)$ uniformly on $\R$. A simple  analysis shows that only the following two possibilities can happen for $u^*(t)$: either  it   is strictly monotone on $\R$ taking values in the interval $(0,1)$ or $u^*(t)$  crosses transversally the level $\phi = 1$ at some sequence of points 
$t_1 <\dots < t_n$ where $t_{j+1} > t_j+\tau$ (this sequence can be finite, $t_n \in \R \cup \{\infty\}$). Linearizing (\ref{defs})  at the steady state $\phi(t) =1$, 
we find the associated characteristic equation
$$
z =-\frac{e^{-z\tau}}{1+\gamma}. 
$$
After a straightforward computation, we can see that this equation has exactly two different real roots $z_2 < z_1<0$ if and only if $\tau < (1+\gamma)/e$.  Moreover, under the latter condition, each other root 
$z_j =\alpha_j+i\beta_j,\ \beta_j \geq0, \ j >2$,  satisfies the inequalities $\alpha_j < z_2$ and $\beta_j > 2\pi/\tau$, see \cite[Theorem 6.1]{mp}. We claim that $u^*(t)$  is eventually 
monotone at $+\infty$ if $\tau < (1+\gamma)/e$. Indeed, otherwise, as we have mentioned above, $u^*(t)$ exponentially converges  to the equilibrium  $1$ and  slowly oscillates around it. Then by the Cao theory of super-exponential solutions, see \cite[Theorem 3.4]{Cao},  there exists a root $z_j=\alpha_j+i\beta_j, j \in \N,$ of the characteristic equation and $C\not=0, \ \delta >0, \ \theta \in \R,$ such that $u^*(t)$ admits the asymptotic representation 
$$
u^*(t)-1 = Ce^{\alpha_jt}\cos(\beta_j t+\theta) + O(e^{(\alpha_j-\delta) t}), \quad t \to +\infty. 
$$
However, if $z_j$ is not a real root, i.e. if  $j >2$, the above representation implies that the distance between large adjacent zeros of  $u^*(t)-1$ is less than $\tau/2$, i.e. $u^*(t)$ oscillates rapidly around $1$, a contradiction. Hence\begin{equation}\label{j}
u^*(t)=1+ C_*e^{z_jt} + O(e^{(z_j-\delta) t}), \quad t \to +\infty,
\end{equation}
where  $j \in \{1,2\}, \alpha_j =z_j <0, \  \beta_j =0$, 
and our claim is proved.  

After some non-trivial technical work, the eventual monotonicity  property of $u^*(t)$ can be extended for $\phi(t,\epsilon)$ for all small $\epsilon \geq 0$ (note that the Cao theory cannot be applied  to the second order delay differential  equations): 
\begin{lem} \label{28} Assume that $0< \tau < (1+\gamma)/e$. Then there exists $\epsilon_0>0$ such that the solution $\phi(t,\epsilon)$ is eventually monotone at $+\infty$ for each $\epsilon\in [0,\epsilon_0]. $
\end{lem}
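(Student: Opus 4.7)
The plan is to study the asymptotic behaviour of $y(t,\epsilon):=\phi(t,\epsilon)-1$ at $+\infty$ via the characteristic equation of (\ref{def}) linearized at $\phi=1$, namely
$$
\chi_\epsilon(z):=\epsilon z^2-z-\frac{e^{-z\tau}}{1+\gamma}=0.
$$
At $\epsilon=0$ this reduces to the first-order delay characteristic equation analyzed just before the lemma, whose real negative roots are $z_2<z_1<0$ (since $\tau<(1+\gamma)/e$) and whose remaining roots $\alpha_j+i\beta_j$, $j\geq 3$, satisfy $\alpha_j<z_2$ and $|\beta_j|>2\pi/\tau$. Because $\partial_z\chi_0(z_j)\neq 0$ for $j=1,2$, the implicit function theorem yields smooth branches $z_j(\epsilon)$ of negative real roots of $\chi_\epsilon$ with $z_j(0)=z_j$. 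A Rouch\'e argument on a large rectangle $\{-M\leq\Re z\leq 0,\ |\Im z|\leq N\}$, exploiting the spectral gap of $\chi_0$ together with the bound $|\beta_j|>2\pi/\tau$, then gives $\delta_0>0$ and $\epsilon_0>0$ such that for all $\epsilon\in[0,\epsilon_0]$ the only zeros of $\chi_\epsilon$ in $\{\Re z\geq z_2(\epsilon)-\delta_0\}$ are precisely $z_1(\epsilon)$ and $z_2(\epsilon)$; the extra root created by the singular term $\epsilon z^2$ is positive of order $1/\epsilon$ and is therefore irrelevant for the behaviour at $+\infty$.

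With this spectral information, the next step is to obtain an $\epsilon$-uniform exponential tail estimate $|y(t,\epsilon)|\leq K e^{(z_2-\eta)t}$ for $t\geq 0$, $\epsilon\in[0,\epsilon_0]$, by bootstrapping on the variation-of-constants representation for
$$
\epsilon y''(t)-y'(t)-\frac{y(t-\tau)}{1+\gamma}=R(y)(t), \qquad R(y)(t)=O(|y(t)\,y(t-\tau)|),
$$
starting from the uniform smallness of $y(t,\epsilon)$ on bounded intervals provided by the convergence $\phi(\cdot,\epsilon)\to u^*(\cdot)$ of Theorem~\ref{TF} and the known tail (\ref{j}) of $u^*$. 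Once the exponential bound is in hand, the Laplace transform $\tilde y(\lambda,\epsilon)$ extends meromorphically into $\{\Re\lambda>z_2-\eta\}$ with poles only at $z_1(\epsilon)$ and $z_2(\epsilon)$, and a standard contour-shift residue computation gives
$$
y(t,\epsilon)=C_1(\epsilon)\,e^{z_1(\epsilon)t}+C_2(\epsilon)\,e^{z_2(\epsilon)t}+O(e^{(z_2-\eta')t}), \qquad t\to+\infty,
$$
with coefficients $C_j(\epsilon)$ continuous in $\epsilon$ and $C_j(0)$ equal to the residues obtained from (\ref{j}) applied to $u^*-1$. Since (\ref{j}) guarantees at least one of $C_1(0),C_2(0)$ to be nonzero, continuity keeps the leading coefficient of $y(t,\epsilon)$ nonzero for all small $\epsilon$ (possibly shrinking $\epsilon_0$); termwise differentiation furnishes an analogous expansion for $y'(t,\epsilon)$ with leading coefficient $z_j(\epsilon)\,C_j(\epsilon)\neq 0$. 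Both $y(t,\epsilon)$ and $y'(t,\epsilon)$ are therefore eventually of constant sign, so $\phi(\cdot,\epsilon)$ is eventually monotone at $+\infty$.

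The main obstacle is the uniform tail estimate at the heart of the second step. Since $\epsilon\partial_t^2-\partial_t$ is a singular perturbation of $-\partial_t$, the naive Green function bounds for the second-order delay operator degenerate as $\epsilon\to 0^+$, and Cao's super-exponential theory (on which the $\epsilon=0$ case relies) is unavailable for second-order delay equations. The intended remedy is to split the resolvent of the linearized operator into a slow part, controlled by the persistent roots $z_1(\epsilon),z_2(\epsilon)$ of Step~1 and admitting $\epsilon$-uniform $L^\infty$-bounds, and a fast part associated with the root near $1/\epsilon$, which manifests only as a boundary layer at $-\infty$ and does not influence the asymptotics at $+\infty$; the nonlinear remainder $R(y)$ is then absorbed using the spectral gap $\delta_0$, closing the bootstrap.
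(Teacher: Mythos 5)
Your overall architecture coincides with the paper's: both arguments rest on the persistence, for small $\epsilon$, of the two real negative roots $z_1(\epsilon),z_2(\epsilon)$ of the characteristic function (\ref{ehe}) in a right half-plane containing no other roots, followed by an exponential asymptotic representation of $\phi(t,\epsilon)-1$ at $+\infty$ whose coefficients are residues depending continuously on $\epsilon$, and a continuity argument from the $\epsilon=0$ information (\ref{j}). Where you differ is in how the two technical inputs are obtained. For the root structure the paper simply cites \cite[Lemma 1.1]{GTLMS} rather than redoing the implicit-function/Rouch\'e count (your version is fine, provided you also rule out roots of large imaginary part in the strip, which a crude modulus estimate does). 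More importantly, the $\epsilon$-uniform exponential tail bound that you correctly single out as the main obstacle --- and for which you only outline a resolvent-splitting bootstrap without carrying it through --- is exactly what Remark \ref{re3} supplies: the weighted-space version $C_{\mu,\delta}$ of Theorem \ref{TF} gives $|\phi(t,\epsilon)-1|\le Ce^{-\delta t}$ for $t\ge 0$ with $C,\delta$ independent of $\epsilon\in[0,\epsilon_0]$, and the continuity in $\epsilon$ of the Laplace transform $\tilde R(z,\epsilon)$ of the nonlinear remainder (hence of the residues $C_j(\epsilon)$) is then taken from \cite[Lemma 4.1]{ATV}. You should invoke these rather than attempt the bootstrap from scratch; as you yourself observe, the Green function of $\epsilon\partial_t^2-\partial_t$ degenerates as $\epsilon\to 0^+$, so closing your bootstrap uniformly in $\epsilon$ is genuinely nontrivial and is the one step of your plan that is not actually executed. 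One further point of care: ``continuity keeps the leading coefficient nonzero'' is slightly too quick, because the index of the leading exponential may jump (if $C_1(0)=0$ while $C_1(\epsilon)\ne 0$ for some small $\epsilon>0$); your two-term expansion with remainder $O(e^{(z_2-\eta')t})$ does rescue the conclusion --- at least one of $C_1(\epsilon),C_2(\epsilon)$ is nonzero and whichever term governs the asymptotics is a real exponential --- but this dichotomy should be made explicit, as the paper does by analysing the set $\mathcal E=\{\epsilon\in[0,\epsilon_0]: C_1(\epsilon)=0\}$.
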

\begin{proof}
After linearizing (\ref{def})  at the equilibrium  $\phi(t) =1$, 
we find the related characteristic equation
\begin{equation}\label{ehe}
\chi(z,\epsilon):= \epsilon z^2 -z -\frac{e^{-z\tau}}{1+\gamma}=0. 
\end{equation}
If  $0< \tau < (1+\gamma)/e$, it follows from \cite[Lemma 1.1]{GTLMS} that there are $\delta >0$ and $\epsilon_1 >0$ such that (\ref{ehe}) has in the half-plane 
$\Re z > z_2-2\delta$ for each $\epsilon \in (0,\epsilon_1]$ exactly three roots $z_j(\epsilon), \ j =0,1,2$.  Moreover, these roots are real and $z_j(\epsilon) \to z_j, \ j =1,2$, and $z_0(\epsilon) \to +\infty$ as $\epsilon \to 0^-$. 
Therefore the constant solution  $\phi(t) =1$ to (\ref{def}) is hyperbolic and the orbit associated with the heteroclinic  $\phi(t,\epsilon)$  belongs to the stable manifold of $\phi(t) =1$. 
This implies that 
$$
\phi(t,\epsilon) = 1+ C_{{j}}(\epsilon)e^{z_j(\epsilon)t}  + O(e^{(z_j-\delta)t}), \quad  t \to +\infty, \quad \epsilon \geq 0, \ \mbox{for some} \  j= j(\epsilon)  \in \{1, 2\}, 
$$
where, similarly to \cite[Section 7]{GT}, $C_j(\epsilon)$ can be calculated as 
$$
C_j(\epsilon)= \mbox{Res}_{z=z_j(\epsilon)}\frac{\tilde R(z,\epsilon)}{\chi(z,\epsilon)} = \frac{\tilde R(z_j(\epsilon),\epsilon)}{\chi'(z_j(\epsilon),\epsilon)} , \quad \tilde R(z,\epsilon):= \int_{\R} R(s,\epsilon)e^{-zs}ds, $$
$$ R(t,\epsilon):= 
\frac{\phi(t-\tau, \epsilon)-1}{1+\gamma}\cdot \frac{(\phi(t, \epsilon)-1) +\gamma(\phi(t, \epsilon)-\phi(t-\tau, \epsilon))}{1+\gamma\phi(t-\tau, \epsilon)}, \quad  \epsilon \geq 0. 
$$
In view of  Remark \ref{re3} and Lemma 4.1 in \cite{ATV} (it is easy to see that we can take $h, p <0$ as well as the negative sign before $y'(t)$ in the referenced  lemma),  $\tilde R(z,\epsilon)$ is a continuous function so that  $C_j(\epsilon)$ depends continuously on $\epsilon$ from some non-degenerate interval  $[0,\epsilon_2] \subset [0,\epsilon_1]$. Hence, if $j=1$ in (\ref{j}) then 
$C_1(\epsilon)\not=0$ for all small $\epsilon$ (say, for $\epsilon \in [0,\epsilon_0]$) and the conclusion of Lemma \ref{28} follows. Suppose now that $C_1(0)=0$ and that the closed set 
$\mathcal E = \{\epsilon \in [0,\epsilon_0]: C_1(0)=0\}$ is infinite and has $0$ as its accumulation point. Then $C_2(0)\not= 0$ so that, without loss of generality, $C_2(\epsilon)\not= 0$ for all $ \epsilon \in {\mathcal E}$. Since 
$$
\phi(t,\epsilon) = 1+ C_2(\epsilon)e^{z_2(\epsilon)t}  + O(e^{(z_2-\delta)t}), \quad  t \to +\infty, \quad \epsilon \in {\mathcal E}, 
$$
$\phi(t,\epsilon)$ is eventually monotone at $+\infty$ for each $\epsilon\in [0,\epsilon_0].$ \qed 
\end{proof}
Somewhat surprising fact is that even if $0< \tau < (1+\gamma)/e$ then $u^*(t)$ can be non-monotone for certain parameters $\tau, \gamma$: 
\begin{lem} \label{30} Assume that $0< \tau < (1+\gamma)/e$ and
\begin{equation}\label{mi}
\zeta := \max_{a\in [0,1]} a \exp\left(\tau + \frac{(1-a)\tau}{1+\gamma a}\right)\left(\frac{1+a\gamma}{1+a\gamma e^{\tau}}\right)^{1+1/\gamma} >1. 
\end{equation}
Then solution $u^*(t) $ is eventually monotone at $+\infty$ and  it is non-monotone on $\R$. 
\end{lem}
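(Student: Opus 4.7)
The plan is to handle the two conclusions of the lemma separately. The eventual monotonicity of $u^*$ at $+\infty$ is essentially already established in the discussion preceding Lemma \ref{28}: under $\tau<(1+\gamma)/e$ the characteristic equation $z=-e^{-z\tau}/(1+\gamma)$ has exactly two simple real roots $z_2<z_1<0$, every other root lies strictly to the left of $z_2$ with imaginary part exceeding $2\pi/\tau$, and the Cao super-exponential theory then forces $u^*(t)-1=Ce^{z_jt}+O(e^{(z_j-\delta)t})$ with $j\in\{1,2\}$, which is eventually monotone. It remains to prove that $u^*$ is non-monotone on $\R$, which I will do by contradiction.

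Suppose $u^*$ is monotone on $\R$. Since $u^*(-\infty)=0$ and $u^*(+\infty)=1$, it must be strictly increasing with values in $(0,1)$. Fix $a\in(0,1)$, to be optimised later, and choose $t_0$ with $u^*(t_0)=a$. Integrating the identity $(\ln u^*)'(s)=G(u^*(s-\tau))$ with $G(u)=(1-u)/(1+\gamma u)$ on $[t_0,t_0+\tau]$: the monotonicity of $u^*$ together with the monotonicity of $G$ yields $G(u^*(s-\tau))\geq G(a)$, hence
$$u^*(t_0+\tau)\geq a\exp\bigl(\tau G(a)\bigr)=a\exp\Bigl(\tfrac{\tau(1-a)}{1+\gamma a}\Bigr),$$
while the trivial inequality $G\leq G(0)=1$ on the same interval gives the complementary upper bound $u^*(s)\leq ae^{s-t_0}$. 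Integrating $(\ln u^*)'$ once more, on $[t_0+\tau,t_0+2\tau]$, and shifting the argument by $\tau$ yields
$$\ln u^*(t_0+2\tau)-\ln u^*(t_0+\tau)=\int_{t_0}^{t_0+\tau}G(u^*(s))\,ds\geq\int_0^\tau G(ae^u)\,du,$$
and the substitution $v=ae^u$ combined with the partial fraction identity $G(v)/v=1/v-(1+\gamma)/(1+\gamma v)$ evaluates the right-hand side explicitly as $\tau-\tfrac{1+\gamma}{\gamma}\ln\tfrac{1+\gamma ae^\tau}{1+\gamma a}$. Assembling the three pieces,
$$u^*(t_0+2\tau)\geq a\exp\Bigl(\tau+\tfrac{(1-a)\tau}{1+\gamma a}\Bigr)\Bigl(\tfrac{1+\gamma a}{1+\gamma ae^\tau}\Bigr)^{1+1/\gamma}=\zeta(a).$$
Since the assumed monotonicity forces $u^*(t_0+2\tau)<1$, taking the supremum in $a$ forces $\zeta\leq 1$, contradicting the hypothesis.

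The core difficulty is arranging the two-step bootstrap in the right order: monotonicity on the first delay-window $[t_0,t_0+\tau]$ supplies a \emph{lower} bound on $u^*(t_0+\tau)$ via $G(a)$, whereas the trivial \emph{upper} bound $u^*(s)\leq ae^{s-t_0}$ (from $G\leq 1$) feeds into the integrated $G(u^*(s))$-term on the next window $[t_0+\tau,t_0+2\tau]$ and, crucially, produces the factor involving $1+\gamma ae^\tau$ rather than the naive $1+\gamma$. Once this two-scale structure is identified, the resulting partial fraction integration is routine and reassembles exactly into the product defining $\zeta(a)$, so that the maximum appearing in the statement is precisely the optimum delivered by this method.
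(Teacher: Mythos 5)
Your proof is correct and follows essentially the same route as the paper's: the identical two-window integration of $(\ln u^*)'$ over $[t_0,t_0+\tau]$ and $[t_0+\tau,t_0+2\tau]$, with the crude bound $u^*(s)\le ae^{s-t_0}$ feeding the second window and the same partial-fraction evaluation producing $\zeta(a)$. The only difference is presentational: the paper argues directly rather than by contradiction --- since $u^*$ is automatically strictly increasing up to the point $t_a$ where it first attains the value $a<1$, one obtains $u^*(t_a+2\tau)\ge\zeta>1$ unconditionally, which additionally yields the quantitative bound $|\phi_c(\cdot)|_\infty\ge\zeta$ invoked in Theorem \ref{T30IN}, whereas your contradiction framing delivers only the non-monotonicity itself.
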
 
\begin{proof} For a fixed $a \in (0,1)$ there exists a unique point $t_a$ such that 
$u^*(t)$ is strictly increasing on $(-\infty, t_a)$ and $u^*(t_a) =a$. Without loss of generality, we can suppose that $t_a=-\tau$. Then, after integrating (\ref{defs}) on $[-\tau, 0]$ and using the monotonicity of $G(u)$, we find that 
$$
a\exp\left(\frac{(1-a)(t+\tau)}{1+\gamma a}\right) \leq u^*(t) =a\exp\int_{-\tau}^t \left(\frac{1-u^*(s-\tau)}{1+\gamma u^*(s-\tau)}\right)ds\leq ae^{(t+\tau)}, \ t \in [-\tau, 0]. 
$$
Similarly, by integrating (\ref{defs}) on $[0,\tau]$, we obtain  that, for $t \in [0,\tau]$, 
$$
u^*(t) \geq a\exp\left(\frac{(1-a)\tau}{1+\gamma a}\right)\exp \int_{0}^t \left(\frac{1-u^*(s-\tau)}{1+\gamma u^*(s-\tau)}\right)ds\geq $$
$$ a\exp\left(\frac{(1-a)\tau}{1+\gamma a}\right)\exp \int_{0}^t \left(\frac{1-ae^{s}}{1+\gamma ae^{s}}\right)ds=  ae^t\left(\frac{1+a\gamma}{1+a\gamma e^{t}}\right)^{1+1/\gamma}\exp\left(\frac{(1-a)\tau}{1+\gamma a}\right). 
$$
Clearly, the last inequality evaluated at the point $t =\tau$ together with (\ref{mi}) imply the conclusion of the lemma. 
\qed
\end{proof}
This completes the proof of  Theorem \ref{T30IN}.  
The graph of $u^*(t)$ for the parameters $\gamma =9,$ $\tau =3$ is shown on Figure 1 in the Introduction.

\section*{Acknowledgments}  \noindent We would like to thank Zuzana Chladn\'a
 for her computational and graphical work some of which is used in this paper.  
This work was realized during a research stay of S. Trofimchuk at the Silesian University in Opava,  Czech Republic. This stay was possible due to the support of  the Silesian University in Opava and of the European Union through the project  CZ.02.2.69/0.0/0.0/16\_027/0008521.  S. Trofimchuk  was  also partially  supported by FONDECYT (Chile),   project 1190712. The work of Karel Has\'ik, Jana Kopfov\'a and Petra N\'ab\v{e}lkov\'a was supported  by the institutional support
for the development of research organizations I\v C 47813059.

\section*{Appendix}
Theorem \ref{TF} below follows from \cite[Theorems 2.1, 3.8, 4.3; Corollaries 3.9, 3.11]{FTnl}. 
To state this result, we first introduce  some  notation.
For $d=(d_1,\dots ,d_N)\in\mathbb{R}^N$, we say that $d>0$
(respectively $d\ge 0$) if $d_i>0$ (respectively $d_i\ge 0$) for
$i=1,\dots,N$. In the Banach space ${\cal C}=C([-\tau,0];\R^N)$, we consider the  partial orders $\geq,$ resp. $>,$  defined as follows: 
$\phi\ge \psi$ if and only if $\phi(\th)- \psi(\th)\ge 0$ for
$\th\in [-\tau,0]$; in an analogous   way, $\phi >\psi$ if  $\phi(\th)-
\psi(\th)> 0$ for $\th\in [-\tau,0]$. Next, ${\cal C}_+$ denotes
the positive cone $C([-\tau,0];[0,\infty)^N)$.

 We also will need the following Banach spaces:

$C_0=\{ y\in C_b:\lim _{s\to\pm \infty}y(s)=0\}$ is considered as a
subspace of $C_b$;

$C_\mu=\{ y\in C_b:\sup_{s\le 0}e^{-\mu s}|y(s)|<\infty\}$ (for
$\mu>0$) with the norm
\begin{equation*}
\|y\|_\mu =\max \{\|y\|_\infty,\|y\|_{\mu}^- \}\q {\rm where} \q
 \|y\|_\mu^-=\sup_{s\le 0}e^{-\mu s}|y(s)|;
\end{equation*}
The space  $C_{\mu,0}=C_{\mu}\cap C_0$ will be  considered as a subspace of $C_\mu$.

We will analyze certain singular perturbations of the heteroclinic connection in the system of functional differential equations 
\begin{equation}\label{e1.2}
u'(t)=f(u_t),\q t\in \mathbb{R},
\end{equation}
where $f$ is such that
\begin{description}
\item [{(H1)}] $f(0)=f(\kappa)=0$, where $\kappa$ is some positive vector;
\item [{(H2)}] (i) $f$  is $C^2$-smooth; furthermore,
(ii) for every $M>0$ there is $\be >0$ such that $f_i(\var)+\be
\var_i(0)\ge 0,$ $ i=1,\dots,N$, for all $\var\in {\cal C}$ with
$0\le \var\le M$;
\item [{(H3)}] for Eq. (\ref{e1.2}),  the equilibrium $u=\kappa$
is locally asymptotically stable and
 globally attractive in the set of solutions of (1.2)  with the initial conditions $\var\in {\cal C}_+,\var(0)> 0$;
\item  [{(H4)}] for Eq. (\ref{e1.2}), its linearized equation about
the equilibrium 0 has a real characteristic root $\la_0>0$, which is
simple and dominant (i.e., $\Re\, z<\la_0$ for all other
characteristic roots $z$);  moreover,
  there is  a characteristic   eigenvector ${\bf v}>0$ associated  with $\la_0$.
\end{description}
We are now in a position to present the above mentioned perturbation result:  
\begin{thm} \label{TF} \cite{FTnl} Assume (H1)-(H4), then equation (\ref{e1.2}) has a positive heteroclinic solution $u^*(t)$:\ $u^*(-\infty)=0, \  u^*(+\infty)=\kappa$. 
Next, denote by $\sigma(A)$ the set of characteristic values for
$$
u'(t)=Lu_t,\q {\rm where}\q  L=Df(0). 
$$
Let positive $\mu<\la_0 $  be such that  the strip $\{
\la\in \mathbb{C}: \Re\, \la \in (\mu, \la_0)\}$ does not intersect
$\sigma (A)$. Then there exist 
a direct sum representation 
\begin{equation*}
C_{\mu ,0}=X_\mu \oplus Y_\mu, \quad \mbox{where} \ X_\mu, Y_\mu \ \mbox{are subspaces of }\ C_{\mu ,0}, \  \ \dim X_\mu=1, 
\end{equation*} and 
$\vare^*>0$, $\sigma >0$, such that for
$0<\vare\le \vare^*$, the following  holds:
for each unit vector
$w\in \mathbb{R}^p$, in a neighbourhood
$B_\sigma(0)$ of $u^*(t)$ in $C_\mu$, the set of all
wavefronts $u(t,x)=\psi (ct+w\cdot x)$ of 
$$
{{\p u}\over {\p t}}(t,x)=\Delta u(t,x)+f(u_t(\cdot ,x)),\q t\in
\mathbb{R},\  x\in\mathbb{R}^p.
$$
with speed $c=1/\vare$ and connecting $0$ to $\kappa$ forms a  one-dimensional
manifold (which does not depend on the choice of $\mu$), with the profiles
$$\psi(\vare, \xi) =u^*+\xi +\phi (\vare, \xi),\quad  \xi\in X_\mu\cap B_\sigma(0),$$
where  $\phi (\vare, \xi) \in Y_\mu\cap B_\sigma (0)$
 is continuous in $(\vare,\xi)$. Next,  the profile $\psi
(\vare,0)$  is positive and satisfies $
\psi (\vare,0)\to u^*\ {\rm in} \ C_\mu $ as  $\vare\to 0^+
$. Moreover, the components of the profile $\psi(\vare, 0)$ are increasing in the
vicinity of $-\infty$ and  $\psi(\vare,0)(t)=O(e^{\la (\vare)t}),$\ $ 
\psi'(\vare,0)(t)=O(e^{\la (\vare)t})$ at $-\infty$, where $\vare=1/c$ and
$\la (\vare)$ is the real  solution of 
$$
\det \Delta_\vare (z)=0,\q {\rm}\q \Delta_\vare (z):=\vare^2
z^2I-zI+L(e^{z\cdot}I),
$$
where $L=Df(0)$, 
with
$\la(\vare)\to \la_0$ as $\vare\to 0^+$.
\end{thm}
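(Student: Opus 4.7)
\textbf{Plan of proof of Theorem \ref{T30IN}.}

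The plan proceeds in four stages. First, for existence of a positive semi-wavefront at every $c\geq 2$, I would observe that (\ref{i1dIN}) corresponds to the singular kernel $K(s,y)=\delta(s-\tau)\delta(y)$ in (\ref{17nl}), so that $N_c(s)=\delta(s-c\tau)$ and $G(u)=(1-u)/(1+\gamma u)$ satisfies $G(0)=G^*=1$; Theorem \ref{Te1} then applies and provides $\phi_c$. For uniqueness up to translation, define $F:C([-\tau,0])\to\R$ by $F(\varphi)=\varphi(0)(1-\varphi(-\tau))/(1+\gamma\varphi(-\tau))$, verify the sub-tangency inequality $F(\psi)-F(\varphi)\leq F'(0)(\psi-\varphi)$ for $0\leq\varphi\leq\psi$ in $C([-\tau,0])$ by a direct computation, and invoke \cite[Theorem 1]{ST} essentially verbatim.

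Second, for the classification of the profile at $+\infty$ (eventually monotone or sine-like slowly oscillating), I would adapt \cite[Section 3]{HTa}: the key structural fact used there (for $\gamma=0$) is the strict monotonicity of $G$ on $\R_+$, which persists for all $\gamma\geq 0$. The argument carries over modulo one extra case, namely that $\phi_c$ slow-oscillates on a bounded interval and then enters a monotone convergence regime. Eventual monotonicity at $-\infty$ for any $c\geq 2\sqrt{G(0)}$ is already contained in Lemma \ref{bebe}.

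Third, for the non-monotone eventually monotone wavefronts at large speed, I rescale by $\epsilon=c^{-2}$ so that the profile equation takes the form
\begin{equation*}
\epsilon\phi''(t)-\phi'(t)+\phi(t)\frac{1-\phi(t-\tau)}{1+\gamma\phi(t-\tau)}=0,
\end{equation*}
whose formal limit at $\epsilon=0$ is the scalar delay equation $\phi'(t)=\phi(t)(1-\phi(t-\tau))/(1+\gamma\phi(t-\tau))$. I would invoke Theorem \ref{TF}: hypotheses (H1), (H2), (H4) hold trivially with $\kappa=1$, and (H3) (global stability of the equilibrium $1$ in $\mathcal{C}_+\setminus\{0\}$) follows from \cite[Example 1.4]{lprtt} because $\tau<(1+\gamma)/e<1.5(1+\gamma)$. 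Theorem \ref{TF} then supplies a positive heteroclinic $u^*$ of the limit equation together with a family of positive wavefronts $\phi(\cdot,\epsilon)$ for small $\epsilon$, with $\phi(\cdot,\epsilon)\to u^*$ uniformly on $\R$.

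Fourth, two separate estimates are needed. For the size bound $|\phi_c|_\infty\geq\zeta>1$, I normalize so that $u^*(-\tau)=a$, where $a\in(0,1)$ is the maximizer in the definition of $\zeta$, and then integrate the limit equation successively on $[-\tau,0]$ and $[0,\tau]$, using strict monotonicity of $G$ at each step to bound the integrand; this yields $u^*(\tau)\geq\zeta$, whence $|\phi_c|_\infty\geq\zeta$ for $c$ large by uniform convergence. For eventual monotonicity at $+\infty$, the characteristic equation of the limit equation at $1$ is $z=-e^{-z\tau}/(1+\gamma)$, which, for $\tau<(1+\gamma)/e$, has exactly two negative real roots $z_2<z_1<0$, all other roots satisfying $\Re z_j<z_2$ and $|\Im z_j|>2\pi/\tau$ (cf.\ \cite[Theorem 6.1]{mp}); Cao's super-exponential solutions theorem \cite[Theorem 3.4]{Cao} then forces $u^*(t)-1=Ce^{z_jt}+O(e^{(z_j-\delta)t})$ with $j\in\{1,2\}$, ruling out slow oscillation at $+\infty$. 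Eventual monotonicity at $-\infty$ for $\phi_c$ follows from Lemma \ref{po} applied to the profile equation near $0$, and non-monotonicity on $\R$ is an automatic consequence of $\phi_c(\pm\infty)\in\{0,1\}$ together with $|\phi_c|_\infty\geq\zeta>1$. The main obstacle is transferring the Cao-type expansion from the formal limit to the perturbed second-order delay equation, where Cao's theory is not directly available; the plan there is to verify via \cite[Lemma 1.1]{GTLMS} that the perturbed characteristic equation $\epsilon z^2-z-e^{-z\tau}/(1+\gamma)=0$ retains exactly three real roots in a fixed right half-plane for small $\epsilon$, identify the wavefront orbit with the stable manifold of $\phi\equiv 1$, and derive an expansion $\phi(t,\epsilon)=1+C_j(\epsilon)e^{z_j(\epsilon)t}+O(e^{(z_j-\delta)t})$ with $C_j(\epsilon)$ continuous in $\epsilon$, computed as a residue of $\tilde R/\chi$ in the spirit of \cite[Section 7]{GT}; if $C_1(0)\neq 0$ persistence is immediate, otherwise continuity of the $C_j$ and the closedness of their zero sets give eventual monotonicity on a punctured neighbourhood of $\epsilon=0$.
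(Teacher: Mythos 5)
Your proposal does not address the statement it was asked to prove. The statement is Theorem \ref{TF}, the singular perturbation result quoted from \cite{FTnl}; what you have written is a plan of proof for Theorem \ref{T30IN}, and your third stage explicitly \emph{invokes} Theorem \ref{TF} as a known tool, which would make the argument circular if offered as a proof of Theorem \ref{TF} itself. In the paper, Theorem \ref{TF} is not proved at all: it is stated in the Appendix as a compilation of \cite[Theorems 2.1, 3.8, 4.3; Corollaries 3.9, 3.11]{FTnl}, and its proof lives entirely in that reference. A genuine proof would have to (i) construct the positive heteroclinic $u^*$ of the limiting functional differential equation $u'(t)=f(u_t)$ from hypotheses (H1)--(H4), using the global attractivity in (H3) together with the simple dominant eigenvalue $\la_0$ and positive eigenvector from (H4); (ii) set up the profile equation $\vare^2\psi''-\psi'+f(\psi_t)=0$ as a perturbation of the limit equation in the weighted space $C_{\mu,0}$ and establish the Fredholm/invertibility properties of the linearization along $u^*$ --- this is where the spectral gap condition on the strip $\{\la\in\mathbb{C}:\Re\,\la\in(\mu,\la_0)\}$ and the splitting $C_{\mu,0}=X_\mu\oplus Y_\mu$ with $\dim X_\mu=1$ enter; and (iii) run a Lyapunov--Schmidt reduction with an implicit function theorem uniform in $\vare$ to produce the one-dimensional family $\psi(\vare,\xi)=u^*+\xi+\phi(\vare,\xi)$ and the asymptotics $\psi(\vare,0)(t)=O(e^{\la(\vare)t})$ at $-\infty$ with $\la(\vare)\to\la_0$. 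None of these steps appears in your text.

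As a secondary remark: read as a blind reconstruction of the proof of Theorem \ref{T30IN}, your plan is in fact quite faithful to Section 5.2 of the paper --- the reduction to the limit equation (\ref{defs}), the appeal to \cite[Theorem 1]{ST} for uniqueness via the sub-tangency of $F$, \cite[Example 1.4]{lprtt} for (H3), Cao's theorem and \cite[Theorem 6.1]{mp} for eventual monotonicity of $u^*$, the residue computation for $C_j(\epsilon)$ via \cite[Lemma 1.1]{GTLMS}, and the two-step integration yielding $|\phi_c|_\infty\geq\zeta$ all mirror the paper's Lemmas \ref{28} and \ref{30}. But that is a different theorem from the one under review, and it presupposes rather than establishes Theorem \ref{TF}.
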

\begin{remark} \label{re3}In fact, after a  slight modification of 
 the proof of Theorem 3.8 in \cite{FTnl}, one can note that the conclusions of Theorem \ref{TF} remain valid if we replace the space $C_{\mu ,0}$ with $C_{\mu,\delta}=\{ y\in C_b:\|y\|_{\mu,\delta} <\infty\}$ for
 small $\delta \in (0,\mu)$ and with the norm
 \begin{equation*}
\|y\|_{\mu,\delta} =\max \{\|y\|_\delta^+,\|y\|_{\mu}^- \}\q {\rm where} \q
 \|y\|_\mu^-=\sup_{s\le 0}e^{-\mu s}|y(s)|, \  \|y\|_\delta^+=\sup_{s\ge 0}e^{\delta s}|y(s)|. 
\end{equation*}
To see this, it suffices to use the change of variables $\phi(t) = u^*(t)+ e^{-\delta t}w(t)$ instead of $\phi(t) = u^*(t)+ w(t)$ before formula (3.3) in \cite{FTnl}. As a consequence of this 
observation, there exist small $\epsilon_0 >0$ and  some constant $C>0$ which does not depend on $\epsilon$ such that 
$|\psi(\vare,0)(t) -\kappa| \leq Ce^{-\delta t}$, $t\geq 0$, \ $\epsilon \in [0,\epsilon_0]$. 
 \end{remark}

\end{document}